\newtheorem{theorem}{Theorem}
\newtheorem{lemma}[theorem]{Lemma}
\newtheorem{corollary}[theorem]{Corollary}
\newtheorem{proposition}[theorem]{Proposition}
\newtheorem*{claim}{Claim}
\theoremstyle{definition}
\newtheorem{remark}[theorem]{Remark}
\newtheorem{definition}[theorem]{Definition}
\title{Forbidden patterns of graphs 12-representable by pattern-avoiding words}
\author{Asahi~Takaoka}
\address{
  College of Information and Systems, 
  Muroran Institute of Technology, 
  Mizumoto 27-1, Muroran, 
  Hokkaido, 050--8585, Japan 
}
\email{takaoka@muroran-it.ac.jp}
\date{\today}
\subjclass[2010]{05C62, 05C75, 68R15}
\keywords{
12-representable graphs, 
vertex orderings with forbidden patterns, 
pattern avoidance, 
word-representable graphs, 
simple-triangle graphs, 
max point-tolerance graphs. 
}
\def\paragraph{\@startsection{paragraph}{4}%
  \z@\z@{-\fontdimen2\font}%
  {\normalfont\bfseries}}
\newcommand{\red}{\textup{red}}
\begin{document}
\maketitle
\begin{abstract}
A graph $G = (\{1, 2, \ldots, n\}, E)$ is \emph{$12$-representable} if 
there is a word $w$ over $\{1, 2, \ldots, n\}$ such that 
two vertices $i$ and $j$ with $i < j$ are adjacent 
if and only if every $j$ occurs before every $i$ in $w$. 
These graphs have been shown to be equivalent to 
the complements of simple-triangle graphs. 
This equivalence provides a characterization in terms of forbidden patterns in vertex orderings 
as well as a polynomial-time recognition algorithm. 
\par
The class of $12$-representable graphs was introduced by Jones \textit{et al.} (2015) 
as a variant of word-representable graphs. 
A general research direction for word-representable graphs 
suggested by Kitaev and Lozin (2015) 
is to study graphs representable 
by some specific types of words. 
For instance, Gao, Kitaev, and Zhang (2017) and Mandelshtam (2019) investigated 
word-representable graphs represented by pattern-avoiding words. 
\par
Following this research direction, this paper studies 
$12$-representable graphs represented by words that avoid a pattern $p$. 
Such graphs are trivial when $p$ is of length $2$. 
When $p = 111$, $121$, $231$, and $321$, 
the classes of such graphs are equivalent to well-known classes, 
such as trivially perfect graphs and bipartite permutation graphs. 
For the cases where $p = 123$, $132$, and $211$, 
this paper provides forbidden pattern characterizations. 

\end{abstract}

\section{Introduction}
\paragraph{Word-representable graphs and $u$-representable graphs.}
Let $w$ be a word over some alphabet. 
Two distinct letters $x$ and $y$ of $w$ are said to be \emph{alternate} if 
either a word $xyxy \cdots$ or a word $yxyx \cdots$ can be obtained 
after removing all other letters from $w$. 
A graph $G$ is \emph{word-representable} 
if there is a word $w$ over the alphabet $V(G)$ 
such that two vertices are adjacent 
if and only if they alternate in $w$. 
Such a word $w$ is called a \emph{word-representant} of $G$. 

Word-representable graphs have been 
well investigated, 
and many results have been obtained~\cite{KL15-book,Kitaev17-LNCS}. 
For example, word-representable graphs can be characterized 
in terms of the orientation of edges, 
called \emph{semi-transitive orientation}~\cite{HKP16-DAM}. 
The recognition problem is NP-complete~\cite{HKP16-DAM}. 
It follows that finding a word-representant is NP-hard. 
The class of word-representable graphs contains 
circle graphs and comparability graphs as proper subclasses~\cite{HKP16-DAM,KP08-JALC}. 

As a generalization of word-representable graphs, 
Jones \textit{et al.}~\cite{JKPR15-EJC} introduced the notion of $u$-representable graphs, 
where $u$ is a word over $\{1, 2\}$ different from $22\cdots2$. 
In this context, word-representable graphs are 
called \emph{$11$-representable}. 
Kitaev~\cite{Kitaev17-JGT} showed that 
if $u$ is of length at least $3$, then 
every labeled graph is $u$-representable. 
This indicates that 
only the following two graph classes are nontrivial 
in the theory of $u$-representable graphs: 
$11$-representable graphs and $12$-representable graphs. 
Note that $21$-representable graphs are exactly $12$-representable graphs~\cite{Kitaev17-JGT}. 
This paper focuses on $12$-representable graphs. 

\paragraph{12-representable graphs.}
Let $[n] = \{1, 2, \ldots, n\}$ for a positive integer $n$. 
A labeled graph $G$ whose labels are drawn from $[n]$ 
is \emph{$12$-representable} if there is a word $w$ over $[n]$ 
containing at least one copy of each letter such that 
two vertices $i$ and $j$ with $i < j$ are adjacent 
if and only if every $j$ occurs before every $i$ in $w$. 
In other words, two vertices $i$ and $j$ are adjacent 
if and only if the word obtained from $w$ after removing 
all letters in $[n] \setminus \{i, j\}$ 
avoids the pattern $12$. 
In this situation, $w$ is said to \emph{$12$-represent} the graph $G$, 
and $w$ is called a \emph{$12$-representant} of $G$. 
For example, the graph in Figure~\ref{fig:co-antenna} 
is $12$-representable by a word $w = 4624153$. 
An unlabeled graph $G$ is $12$-representable if 
there is a labeling of $G$ that generates a $12$-representable labeled graph. 

\begin{figure}[ht]
\centering\begin{tikzpicture}
\def\len{0.8}
\useasboundingbox (-1.8, -0.3) rectangle (1.8, 1.4);
\tikzstyle{every node}=[draw,circle,fill=white,minimum size=5pt,inner sep=0pt]
\node [label=above:4]        (a) at ($(45:\len*1) + (135:\len*1)$) {};
\node [label=above right:1]  (f) at ($(45:\len*1) + (135:\len*0)$) {};
\node [label=below:2]        (e) at ($(45:\len*1) + (135:\len*-1)$) {};
\node [label=below:6]        (c) at (0, 0) {};
\node [label=above left:3]   (d) at ($(45:\len*0) + (135:\len*1)$) {};
\node [label=below:5]         (b) at ($(45:\len*-1) + (135:\len*1)$) {};
\draw [] 
	(a) -- (f) -- (e) -- (c) -- (b) -- (d) -- (a)
	(d) -- (c) -- (f)
;
\end{tikzpicture}
\caption{A graph $12$-representable by a word $w = 4624153$.} 
\label{fig:co-antenna}
\end{figure}
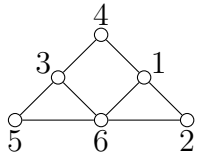

Jones \textit{et al.}~\cite{JKPR15-EJC} showed that 
the class of $12$-representable graphs contains 
co-interval graphs and permutation graphs 
as proper subclasses. 
They also showed that the graph class is 
a proper subclass of comparability graphs, 
which is a subclass of word-representable graphs. 

The $12$-representable graphs are known to have nice characterizations. 
Jones et al.~\cite{JKPR15-EJC} provided 
a characterization of $12$-representable trees 
in terms of forbidden subtrees. 
They also provided 
a necessary condition for $12$-representability, 
which turned out to be sufficient~\cite{Takaoka23-DMGT-inpress}. 
Chen and Kitaev~\cite{CK22-DMGT} investigated 
the $12$-representability of a subclass of grid graphs 
and presented its forbidden induced subgraph characterization. 
Recently, $12$-representable bipartite graphs were 
shown to be equivalent to 
interval containment bigraphs~\cite{Takaoka23-DMGT-inpress}. 
This provides a forbidden induced subgraph characterization 
of $12$-representable bipartite graphs~\cite{FHH99-Combinatorica,STU10-DAM,TM76-DM}. 

The author showed in~\cite{Takaoka23-DMGT-inpress} that 
$12$-representable graphs are exactly 
the complements of simple-triangle graphs~\cite{CK87-CN,Mertzios15-SIAMDM}. 
This equivalence leads to a characterization of $12$-representable graphs 
in terms of forbidden induced \emph{ordered} subgraphs, 
also known as \emph{forbidden patterns}~\cite{Takaoka23-DMGT-inpress}. 
Forbidden induced subgraph characterization 
remains an open problem for $12$-representable graphs~\cite{Takaoka20a-DAM,Takaoka23-DMGT-inpress}. 
The equivalence also indicates that 
$12$-representable graphs can be recognized in polynomial time~\cite{Takaoka23-DMGT-inpress}. 
This implies that 
a $12$-representant can be obtained in polynomial time if it exists. 
Finding an optimal (i.e., shortest) $12$-representant is also investigated. 
Shortest $12$-representants can be obtained in $O(n^2)$ time for labeled graphs, 
whereas for unlabeled graphs, this is an open problem~\cite{Takaoka23a-arXiv}. 
Moreover, the paper~\cite{Takaoka23-DMGT-inpress} suggests studying 
simple-triangle graphs via $12$-representability. 
This paper follows this research direction. 

\paragraph{Graphs representable by some specific types of words.}
As a general research program for word-representable graphs, 
Kitaev and Lozin suggested in~\cite[page~183]{KL15-book} 
to study graphs representable by some specific types of words. 
One example is words containing exactly $k$ copies of each letter, called \emph{$k$-uniform} words. 
Graphs word-representable by $k$-uniform words have been studied 
from the beginning~\cite{KP08-JALC}. 
However, such types of graphs are not interesting for $12$-representable graphs, 
since every $12$-representable graph admits a $12$-representant 
in which each letter occurs at most twice~\cite{JKPR15-EJC}. 

Another example is pattern-avoiding words. 
For instance, Gao, Kitaev, and Zhang~\cite{GKZ17-AJC} studied graphs 
word-representable by $132$-avoiding words. 
They showed that any such graph admits a $132$-avoiding 
representant in which each letter occurs at most twice. 
This indicates that if a graph is word-representable by a $132$-avoiding word, 
then it is necessarily a circle graph. 
Moreover, all trees, cycles, and complete graphs 
are word-representable by $132$-avoiding words. 
On the other hand, 
Mandelshtam~\cite{Mandelshtam19-DMGT} investigated graphs 
word-representable by $123$-avoiding words 
and showed that any such graph admits a $123$-avoiding 
representant in which each letter occurs at most twice; 
hence, it is a circle graph. 
All paths, cycles, and complete graphs but not all trees 
are representable by $123$-avoiding words. 

In this paper, we begin to study graphs 
$12$-representable by pattern-avoiding words. 
Let $p$ be a word (\emph{pattern}) with $\red(p) = p$, 
where $\red(p)$ denotes the word obtained from $p$ by replacing 
each occurrence of the $i$th smallest letter with $i$. 
We say that a graph is \emph{$p$-representable} if 
it is $12$-representable by a $p$-avoiding word. 
This notation is different from that in~\cite{GKZ17-AJC,Mandelshtam19-DMGT}. 
For example, 
the term $123$-representable graph in~\cite{GKZ17-AJC,Mandelshtam19-DMGT} means 
a graph \emph{word-representable} by a $123$-avoiding word, 
while in this paper, it means a graph \emph{$12$-representable} by a $123$-avoiding word.

\paragraph{Forbidden pattern characterization.}
A \emph{vertex ordering} of a graph is a linear ordering of its vertices. 
A graph is \emph{ordered} if it is given with a vertex ordering. 
A \emph{forbidden pattern characterization} of a graph class $\mathcal{G}$ is as follows: 
a graph $G$ is in $\mathcal{G}$ if and only if 
$G$ admits a vertex ordering that contains 
no induced ordered subgraph isomorphic to an ordered graph in 
the set $\mathcal{F_G}$. 
In such a situation, the ordered subgraphs in $\mathcal{F_G}$ are called \emph{forbidden patterns}. 
For example, a graph $G$ is an interval graph if and only if 
there is a vertex ordering of $G$ such that 
for any three vertices $x \prec y \prec z$, 
if $xz \in E(G)$ then $yz \in E(G)$~\cite{Olariu91-IPL}. 
In other words, a graph is an interval graph if and only if 
it admits a vertex ordering that contains no pattern 
in Figure~\ref{fig:fp_simple}\subref{fig:fp_m_int} (Theorem~\ref{thm:fp_m_int}). 

This paper focuses on forbidden pattern characterizations of $p$-representable graphs for several reasons. 
First, $12$-representable graphs and their complements (i.e., simple-triangle graphs) 
can be characterized naturally 
in terms of forbidden patterns~\cite{Takaoka18-DM,Takaoka23-DMGT-inpress}. 
These characterizations lead to the fastest known recognition algorithms, 
which generate a vertex ordering containing no forbidden patterns~\cite{Takaoka20a-DAM,Takaoka23-DMGT-inpress}. 
Given such a vertex ordering, we can obtain a $12$-representant 
in polynomial time~\cite{Takaoka23-DMGT-inpress,Takaoka23a-arXiv}. 

Second, vertex ordering is equivalent to graph labeling, and labeling is important 
when dealing with graphs $12$-representable by pattern-avoiding words. 
We explain this in more detail. 
We say a labeling of a graph \emph{induces} a vertex ordering 
if $x \prec y$ in the ordering whenever the label of $x$ is less than that of $y$. 
(Recall that each vertex has a unique label drawn from $[n]$.) 
Although $12$-representable graphs have been defined in terms of labeled graphs, 
we can use ordered graphs to define them, 
because the linear order induced by the labeling, not the value of the labels, 
is essential for the definition. 
In the following, when we consider a labeled graph, we implicitly assume that 
a vertex ordering induced by the labels is given, and vice versa. 

Labeling is important when dealing with $12$-representable graphs 
because some graphs, such as the path $P_3$ on three vertices, have two different labelings such that 
one is $12$-representable but the other is not~\cite{JKPR15-EJC,Kitaev17-JGT}. 
The same phenomenon can be observed when we consider graphs word-representable by pattern-avoiding words~\cite{GKZ17-AJC,Mandelshtam19-DMGT}. 
Therefore, labeling, and hence vertex ordering, is important 
for graphs $12$-representable by pattern-avoiding words. 

Finally, forbidden pattern characterization is an interesting research topic on its own. 
Many important graph classes, 
including interval graphs, permutation graphs and comparability graphs, 
can be characterized in terms of forbidden patterns. 
The well-known results are surveyed in~\cite{Damaschke90-incollection,FH21-SIDMA} and~\cite[Section 7.4]{BLS99}. 
Other recent examples can be found 
in~\cite{BB23-DAM,CCFHHHS17-DAM,HHMR20-SIDMA,Huang18-DAM,JT19-EJC,SC15-DMTCS,Takaoka18-DM}. 

Recently, Feuilloley and Habib~\cite{FH21-SIDMA} systematically characterized 
all classes of graphs defined by sets of forbidden patterns on three vertices. 
They demonstrated that all such classes can be recognized in polynomial time 
(most of them can be recognized in linear time). 
They also state that an obvious next step is to study graph classes characterized by larger patterns. 
On the other hand, for any $k \geq 4$, there is a forbidden pattern $F$ on $k$ vertices such that recognizing the class of graphs characterized by $F$ is NP-complete~\cite{HMR14-LNCS}. 

Forbidden pattern characterizations sometimes lead to efficient recognition algorithms. 
Indeed, such algorithms were recently proposed for 
adjusted interval digraphs~\cite{FHHR12-DAM,Takaoka21-DAM}, 
simple-triangle graphs~\cite{Takaoka20a-DAM}, and 
interval bigraphs~\cite{Rafiey22-JGT}. 
Moreover, we can exploit forbidden pattern characterizations to design faster algorithms for some combinatorial problems, such as computing maximum matching~\cite{MNN18-SIDMA} and induced matching~\cite{HM20-ALGO}. 
Very recently, a circular version of forbidden pattern characterization was introduced, 
which, for example, can be used to naturally characterize circular-arc graphs and outerplanar graphs~\cite{GHH23-AMC}.

\paragraph{Our results and organization of the paper.}
The classes of $p$-representable graphs are trivial 
when the pattern $p$ is of length $2$ (Proposition~\ref{prop:p=2}). 
In this paper, we investigate $p$-representable graphs when $p$ is of length $3$ 
and show that most of them admit nice characterizations. 

We prove that the classes of $p$-representable graphs 
are equivalent to some well-known classes 
when $p = 111$, $121$, $231$, and $321$. 
For the case where $p = 123$, $132$, and $211$, 
we provide forbidden pattern characterizations. 
These characterizations lead to some results for these classes. 
For instance, the class of 
$123$-representable graphs (resp. $132$-representable graphs and $211$-representable graphs) 
is a subclass of 
max point-tolerance graphs~\cite{CCFHHHS17-DAM,Hixon13-Master,Rusu23-TCS,SC15-DMTCS} 
(resp. co-interval graphs and grounded $L$-graphs~\cite{JT19-EJC}). 
When $p = 112$, characterizing $p$-representable graphs remains an open problem. 
The remaining cases are equivalent to one of the above cases 
(Corollary~\ref{cor:reverse_supplement}). 

The paper is organized as follows. 
In Section~\ref{sec:preliminaries}, we provide some definitions and notations 
as well as some basic results used in this paper. 
In Section~\ref{sec:simple cases}, we deal with $111$, $121$, $231$, and $321$-representable graphs. 
In Sections~\ref{sec:123},~\ref{sec:132}, and~\ref{sec:211}, we provide 
the forbidden pattern characterizations 
for $123$, $132$, and $211$-representable graphs, respectively. 
In Section~\ref{sec:conclusion}, 
we conclude this paper and discuss further research directions.

\section{Preliminaries}\label{sec:preliminaries}
In this section, we introduce some definitions and notations 
as well as some basic results used in this paper.

\subsection{Words and graphs}
Let $[n] = \{1, 2, \ldots, n\}$ for a positive integer $n$. 
For a word $w$ over $[n]$ and a subset $S \subseteq [n]$, 
let $w_S$ denote a word obtained from $w$ 
after removing all letters in $[n] \setminus S$. 
The \emph{reduced form} of a word $w$, 
denoted by $\red(w)$, is the word obtained from $w$ by replacing 
each occurrence of the $i$th smallest letter with $i$. 
Let $p = p_1 p_2 \cdots p_k$ be a word (\emph{pattern}) with $\red(p) = p$. 
A word $w = w_1 w_2 \cdots w_{\ell}$ \emph{contains} the pattern $p$ 
if there are indices $1 \leq i_1 < i_2 < \cdots < i_k \leq \ell$ 
such that $\red(w_{i_1} w_{i_2} \ldots w_{i_k}) = p$. 
A word $w$ \emph{avoids} the pattern $p$ if it does not contain $p$. 
In this case, the word $w$ is called \emph{$p$-avoiding}. 

We assume that all graphs in this paper are finite, simple, and undirected. 
We write $uv$ for the edge joining two vertices $u$ and $v$. 
For a graph $G$, we write $V(G)$ and $E(G)$ 
for the vertex set and the edge set of $G$, respectively. 
We usually denote the number of vertices by $n$. 
The \emph{complement} of a graph $G$ is the graph $\overline{G}$ 
such that $V(\overline{G}) = V(G)$ and 
$uv \in E(\overline{G}) \iff uv \notin E(G)$ 
for any two vertices $u, v \in V(\overline{G})$. 
For a graph $G$, a graph $H$ with $V(H) \subseteq V(G)$ 
is an \emph{induced subgraph} of $G$ 
if for all $u, v \in V(H)$, 
$uv \in E(H) \iff uv \in E(G)$. 

A \emph{labeled graph} of a graph $G$ is obtained from $G$ 
by assigning an integer (\emph{label}) to each vertex. 
A \emph{labeling} of $G$ is an assignment of labels to the vertices of $G$. 
All labels are assumed to be distinct and drawn from $[n]$. 
For a labeled graph, we usually denote its vertices by their labels. 
Unless stated otherwise, graphs are assumed to be unlabeled.

\subsection{Graph classes}
We now describe the fundamental graph classes that appear in this paper. 
For a comprehensive survey on graph classes, see~\cite{BLS99,Golumbic04,Spinrad03}. 

A graph is an \emph{intersection graph} 
if there is a set of objects such that 
each vertex corresponds to an object 
and two vertices are adjacent if and only if 
the two corresponding objects have a nonempty intersection. 
The set of objects is called a \emph{representation} or \emph{model} of the graph. 
We use the term model in this paper to avoid confusion. 

An \emph{interval graph} is the intersection graph 
of intervals on the real line. 
The complement of an interval graph is called a \emph{co-interval graph}. 
An interval graph is \emph{unit} if it admits a model 
in which every interval has unit length, and 
\emph{proper} if no interval properly contains another. 
An interval graph is unit 
if and only if it is proper~\cite{BW99-DM}. 
An interval graph is a \emph{trivially perfect graph} if it admits a model 
in which for any two intervals, either they are disjoint or one contains the other. 

Let $\pi$ be a permutation over $[n]$. 
The \emph{permutation graph} of $\pi$ is a graph 
with vertex set $[n]$ such that 
two vertices $i$ and $j$ with $i < j$ are adjacent 
if and only if $j$ occurs before $i$ in $\pi$. 
A \emph{bipartite permutation graph} is a permutation graph that is bipartite. 

We need the forbidden pattern characterizations of the graphs above. 
See Figure~\ref{fig:fp_simple} for the forbidden patterns. 
To draw patterns, as in~\cite{FH21-SIDMA}, we use 
lines and dashed lines to denote edges and non-edges, respectively, 
while edges that may or may not be present are not drawn. 

\begin{theorem}[\cite{BLS99,Damaschke90-incollection,FH21-SIDMA}]\label{thm:fp_m_int}
A graph is an interval graph if and only if 
it has a vertex ordering 
that does not contain any pattern in Figure~\ref{fig:fp_simple}\subref{fig:fp_m_int}. 
\end{theorem}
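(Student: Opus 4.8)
The plan is to prove both directions directly from an interval model, using the description of the pattern in Figure~\ref{fig:fp_simple}\subref{fig:fp_m_int} given just before the statement: it is the ordered triple $x \prec y \prec z$ with $xz \in E(G)$ but $yz \notin E(G)$, so avoiding it means precisely that $xz \in E(G)$ implies $yz \in E(G)$ for all $x \prec y \prec z$.

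For the forward direction, I would suppose $G$ is an interval graph and fix a model assigning to each vertex $v$ an interval $I_v = [l_v, r_v]$. After a small perturbation we may assume all right endpoints are distinct, and I would order the vertices by increasing right endpoint, so that $x \prec y$ iff $r_x < r_y$. Given $x \prec y \prec z$ with $xz \in E(G)$, the intervals $I_x$ and $I_z$ meet while $r_x < r_z$, which forces $l_z \le r_x$. Since $r_x < r_y$ this yields $l_z \le r_y$, and as $l_y \le r_y < r_z$ we also have $l_y \le r_z$; hence $I_y \cap I_z \neq \emptyset$, i.e. $yz \in E(G)$. Thus the right-endpoint ordering contains no forbidden pattern.

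For the converse, I would start from an ordering $v_1 \prec v_2 \prec \cdots \prec v_n$ that avoids the pattern and extract the key structural consequence: for each $v_k$ its set of earlier neighbours $\{v_i : i < k,\ v_i v_k \in E(G)\}$ is a consecutive block of indices immediately preceding $v_k$. Indeed, if $v_i v_k \in E(G)$ with $i < j < k$, then avoiding the pattern forces $v_j v_k \in E(G)$, so the earlier neighbours of $v_k$ occupy exactly the indices $\{l(k), l(k)+1, \ldots, k-1\}$, where $l(k)$ is the smallest index of an earlier neighbour (and $l(k) = k$ if there is none). I would then build a model by setting $I_k = [\,l(k),\, k\,]$, taking the right endpoint to be the position in the ordering and the left endpoint to be the earliest-neighbour index. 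For $j < k$ the intervals $I_j$ and $I_k$ meet iff $l(k) \le j$, which by the consecutive-block property holds iff $v_j v_k \in E(G)$; hence this model represents $G$, so $G$ is an interval graph.

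The routine parts (tie-breaking in the forward direction and the elementary interval-intersection conditions) are immediate, so the only real content lies in the converse. There the single observation carrying the whole argument is that avoiding the pattern makes each vertex's earlier neighbourhood a consecutive suffix of the prefix preceding it; once that is stated cleanly, reading left endpoints off the earliest neighbour and right endpoints off the ordering positions converts the ordering into an interval model almost mechanically, and I expect no genuine obstacle beyond isolating that structural lemma.
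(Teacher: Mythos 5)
Your proof is correct. Note that the paper does not prove this statement at all: Theorem~\ref{thm:fp_m_int} is quoted as a known result with citations to the literature (it is Olariu's characterization of interval graphs, also mentioned in the paper's introduction), so there is no in-paper argument to compare against. Your two directions are exactly the standard ones from those references: ordering by right endpoints (after breaking ties) gives a pattern-free ordering, and conversely, pattern-avoidance makes each vertex's earlier neighbourhood a consecutive suffix of its prefix, from which the intervals $[\,l(k),k\,]$ form a model. Both steps check out, including the small but necessary observation that $l_x \le r_z$ (resp.\ $l(j) \le k$) holds automatically, so only one inequality governs each intersection.
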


\begin{theorem}[\cite{BLS99,Damaschke90-incollection,FH21-SIDMA}]\label{thm:fp_pg}
A graph is a permutation graph if and only if 
it has a vertex ordering 
that does not contain any pattern 
in Figures~\ref{fig:fp_simple}\subref{fig:fp_comp} and~\ref{fig:fp_simple}\subref{fig:fp_cocomp}. 
\end{theorem}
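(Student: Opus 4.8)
The plan is to prove both directions by working directly with the two linear orders that underlie a permutation model, which keeps the argument self-contained while making transparent the classical fact that permutation graphs are exactly the graphs that are simultaneously comparability and co-comparability graphs. Throughout I would write the given vertex ordering as $v_1 \prec v_2 \prec \cdots \prec v_n$ and recall, from the names of the patterns, that the pattern in Figure~\ref{fig:fp_simple}\subref{fig:fp_comp} is the ordered triple $x \prec y \prec z$ with $xy, yz \in E$ and $xz \notin E$, while the pattern in Figure~\ref{fig:fp_simple}\subref{fig:fp_cocomp} is the ordered triple $x \prec y \prec z$ with $xy, yz \notin E$ and $xz \in E$. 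The first is the obstruction to orienting the edges of $G$ from smaller to larger into a transitive orientation, and the second is the same obstruction for the non-edges; a permutation model encodes both transitive orientations at once.

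For the forward direction I would take $G$ to be the permutation graph of some $\pi$ on $[n]$ and use the natural ordering $1 \prec 2 \prec \cdots \prec n$. The point is that adjacency of a pair means precisely that the larger label precedes the smaller one in $\pi$, so ``precedes in $\pi$'' is a transitive relation that both patterns would violate: if $ij$ and $jk$ are edges then $k$ precedes $j$ precedes $i$, forcing $ik$ to be an edge and ruling out the pattern in Figure~\ref{fig:fp_simple}\subref{fig:fp_comp}; dually, if $ij$ and $jk$ are non-edges then $i$ precedes $j$ precedes $k$, forcing $ik$ to be a non-edge and ruling out the pattern in Figure~\ref{fig:fp_simple}\subref{fig:fp_cocomp}.

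For the backward direction, starting from an ordering $\prec$ that contains neither pattern, I would construct the second order of the model explicitly. Define a tournament $\sigma$ on $V(G)$ by declaring, for each pair $v_i \prec v_j$, that $v_j$ precedes $v_i$ in $\sigma$ when $v_iv_j \in E(G)$ and that $v_i$ precedes $v_j$ in $\sigma$ otherwise. Since a finite tournament with no directed $3$-cycle is transitive and hence a linear order, it suffices to show $\sigma$ is acyclic. Any directed $3$-cycle would lie on some triple $v_i \prec v_j \prec v_k$, and a short case check on the adjacencies of such a triple shows that a $3$-cycle arises exactly when $v_iv_j, v_jv_k \in E$ but $v_iv_k \notin E$, or when $v_iv_j, v_jv_k \notin E$ but $v_iv_k \in E$, i.e.\ exactly on the two forbidden patterns. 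With both patterns excluded, $\sigma$ is a linear order; relabelling $v_i$ by $i$ and reading off the labels in the order $\sigma$ produces a permutation $\pi$ for which, by construction, two labels $i < j$ are adjacent precisely when $j$ precedes $i$ in $\pi$, so $G$ is the permutation graph of $\pi$.

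The main obstacle is the backward direction, specifically the passage from the local hypothesis (no forbidden triple) to the global object (the second linear order $\sigma$). The eight-case verification itself is routine, so rather than grinding it out I would isolate the two structural facts that make it work: that the two forbidden patterns are exactly the two ways a directed $3$-cycle can appear in $\sigma$, and that a tournament is a linear order as soon as it omits directed $3$-cycles (since the shortest cycle in a tournament has length three). The one subtlety to state carefully is this last implication, which is what upgrades the triple-wise consistency into a genuine total order and thereby a single permutation realizing $G$.
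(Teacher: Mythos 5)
Your proof is correct. Note that the paper does not actually prove Theorem~\ref{thm:fp_pg}; it is stated as a classical result with citations. However, the construction in your backward direction --- orienting each pair according to adjacency relative to $\prec$, observing that the two forbidden patterns are exactly the triples producing directed $3$-cycles, and reading the resulting linear order off as the second order of a permutation model --- is precisely the construction the paper uses to prove its labeled analogue, Theorem~\ref{thm:permutation_labeled}, where the orientation of the complete graph is defined by $i \to j$ if $ij \notin E(G)$ and $j \to i$ otherwise. The one place you go beyond the paper is in justifying acyclicity: the paper delegates this to a citation of Golumbic's book, whereas you prove it inline via the standard fact that a tournament with no directed $3$-cycle has no directed cycle at all (a shortest directed cycle in a tournament has length three), which together with your eight-case check on triples makes the argument fully self-contained. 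Both routes are sound; yours trades a citation for a short, elementary lemma, which is arguably preferable in a paper whose main theme is forbidden pattern characterizations.
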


\begin{theorem}[\cite{BLS99,Damaschke90-incollection,FH21-SIDMA}]\label{thm:fp_tpg}
A graph is a trivially perfect graph if and only if 
it has a vertex ordering 
that does not contain any pattern 
in Figures~\ref{fig:fp_simple}\subref{fig:fp_m_int} and~\ref{fig:fp_simple}\subref{fig:fp_comp}. 
\end{theorem}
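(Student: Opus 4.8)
The plan is to read off the two forbidden patterns concretely and then prove the two implications separately. Following the wording used for Theorem~\ref{thm:fp_m_int}, the pattern of Figure~\ref{fig:fp_simple}\subref{fig:fp_m_int} is a triple $x \prec y \prec z$ with $xz \in E(G)$ and $yz \notin E(G)$, and the pattern of Figure~\ref{fig:fp_simple}\subref{fig:fp_comp} is a triple $x \prec y \prec z$ with $xy, yz \in E(G)$ but $xz \notin E(G)$. I will use throughout the standard fact that trivially perfect graphs are exactly the comparability graphs of rooted forests, which is the order-theoretic reformulation of the laminar interval model in the definition above: two intervals intersect if and only if one contains the other, and the containment relation of a laminar family is the ancestor order of a forest.

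For the forward direction I start from a model of $G$ in which any two intervals are either disjoint or nested, write $\ell_v, r_v$ for the endpoints of the interval of $v$, and order the vertices by increasing $r_v$, breaking a tie between two nested intervals by placing the containing one later. The key observation is that if $x \prec y$ and $xy \in E(G)$, then the interval of $x$ is contained in that of $y$: the edge forces nesting, and $y \subset x$ would give $r_y \le r_x$, hence $r_x = r_y$, whence the tie-break would place $x$ after $y$, a contradiction. Granting this, the pattern of Figure~\ref{fig:fp_simple}\subref{fig:fp_comp} is excluded at once, since $x \subset y \subset z$ forces $x \subset z$, i.e.\ $xz \in E(G)$. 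The pattern of Figure~\ref{fig:fp_simple}\subref{fig:fp_m_int} is excluded by a short endpoint computation: from $x \subset z$ and $r_x \le r_y$ one gets $\ell_z \le \ell_x \le r_x \le r_y$, so $y$ cannot lie entirely to the left of $z$; since also $r_y \le r_z$, laminarity then forces $y$ and $z$ to be nested, giving $yz \in E(G)$.

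For the backward direction, suppose a vertex ordering $\prec$ avoids both patterns. Orient each edge from its $\prec$-smaller to its $\prec$-larger endpoint and write $x <_P z$ for the resulting relation. Avoiding Figure~\ref{fig:fp_simple}\subref{fig:fp_comp} says precisely that $x <_P y$ and $y <_P z$ imply $xz \in E(G)$, i.e.\ $<_P$ is transitive, so $P$ is a partial order and $G$ is its comparability graph. Avoiding Figure~\ref{fig:fp_simple}\subref{fig:fp_m_int} says that every principal up-set $\{z : x <_P z\}$ is a chain: if $z_1 \prec z_2$ both lie above $x$, then applying the pattern to $x \prec z_1 \prec z_2$ with $xz_2 \in E(G)$ yields $z_1 z_2 \in E(G)$, hence $z_1 <_P z_2$. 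A partial order all of whose principal up-sets are chains is the ancestor order of a rooted forest, so $G$ is the comparability graph of a rooted forest, and therefore trivially perfect.

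I expect the forward direction to be the main obstacle. It is tempting to order the vertices by an arbitrary linear extension of the forest order, but this fails: one can slip a vertex $y$ from an unrelated subtree between a vertex $x$ and its ancestor $z$, creating the pattern of Figure~\ref{fig:fp_simple}\subref{fig:fp_m_int}. The real content is therefore the choice of an order that keeps each subtree contiguous — the right-endpoint order above does exactly this — together with checking that this single order kills both patterns simultaneously. The backward direction is, by comparison, routine bookkeeping once the partial order $P$ is set up, the only external input being the equivalence between trivially perfect graphs and comparability graphs of rooted forests.
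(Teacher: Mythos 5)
Your proof is correct, but there is nothing in the paper to compare it against: Theorem~\ref{thm:fp_tpg} is stated as a known result with citations to~\cite{BLS99,Damaschke90-incollection,FH21-SIDMA} and is never proved in the paper. What you have written is therefore a self-contained proof of a quoted result, and it takes a route that fits the paper particularly well, since it works directly from the paper's definition of trivially perfect graphs as intersection graphs of laminar interval families. Both directions check out. In the forward direction, the right-endpoint order with the ``containing interval comes later'' tie-break is exactly the right choice: your key observation that $x \prec y$ and $xy \in E(G)$ force $I_x \subseteq I_y$ immediately kills the pattern of Figure~\ref{fig:fp_simple}\subref{fig:fp_comp}, and the endpoint computation ruling out the pattern of Figure~\ref{fig:fp_simple}\subref{fig:fp_m_int} is sound (from $I_x\subseteq I_z$ one gets $\ell_z \le r_y$ and from $r_y\le r_z$ one gets $\ell_y \le r_z$, so $I_y\cap I_z\neq\emptyset$). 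In the backward direction, orienting edges along $\prec$, using avoidance of Figure~\ref{fig:fp_simple}\subref{fig:fp_comp} for transitivity and avoidance of Figure~\ref{fig:fp_simple}\subref{fig:fp_m_int} to make every principal up-set a chain, is clean and correct. The standard literature proofs instead route through the characterization of trivially perfect graphs as $P_4$- and $C_4$-free graphs (equivalently, every connected induced subgraph has a universal vertex); your argument avoids invoking that machinery, at the cost of the interval bookkeeping, and your closing remark about why an arbitrary linear extension of the forest order fails identifies the genuine content of the forward direction.

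Two small points should be patched. First, your backward direction ends with ``comparability graph of a rooted forest, therefore trivially perfect,'' but your preliminary paragraph only justifies the opposite implication (laminar family $\Rightarrow$ forest order); you also need that every rooted forest's ancestor order is the containment order of some laminar interval family. This is a one-line recursive construction (give each root an interval, and give the children of each node pairwise disjoint proper subintervals of its interval), but it must be said, since it is precisely the step that converts your poset back into the interval model the paper's definition demands. Second, the tie-break ``containing one later'' is ambiguous when two vertices receive identical intervals; either perturb the model so that all endpoints are distinct, or note that when $I_x = I_y$ the key observation holds under either choice, so any resolution of the tie works.
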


\begin{theorem}[\cite{BLS99,Damaschke90-incollection,FH21-SIDMA}]\label{thm:fp_bpg}
A graph is a bipartite permutation graph if and only if 
it has a vertex ordering 
that does not contain any pattern 
in Figures~\ref{fig:fp_simple}\subref{fig:fp_comp},~\ref{fig:fp_simple}\subref{fig:fp_cocomp}, and~\ref{fig:fp_simple}\subref{fig:fp_triangle}. 
\end{theorem}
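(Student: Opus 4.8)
The plan is to derive this characterization from the permutation-graph characterization already recorded in Theorem~\ref{thm:fp_pg}, by isolating the exact role played by the pattern in Figure~\ref{fig:fp_simple}\subref{fig:fp_triangle}. The first step I would take is to record what that pattern encodes: it is the all-edges ordered graph on three vertices, so an ordering of $G$ contains it precisely when $G$ has three pairwise adjacent vertices, i.e.\ a triangle. In particular, a triangle of $G$ induces a copy of this pattern under \emph{every} vertex ordering, and conversely every copy is a triangle; hence ``some ordering avoids the triangle pattern'' and ``every ordering avoids the triangle pattern'' both coincide with ``$G$ is triangle-free''. Since a bipartite permutation graph is exactly a permutation graph that is bipartite, and bipartiteness for such graphs turns out to be equivalent to triangle-freeness, the two halves of the equivalence should follow by combining Theorem~\ref{thm:fp_pg} with this observation.

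For the forward direction, suppose $G$ is a bipartite permutation graph. Being a permutation graph, Theorem~\ref{thm:fp_pg} furnishes a vertex ordering $\sigma$ avoiding the patterns in Figures~\ref{fig:fp_simple}\subref{fig:fp_comp} and~\ref{fig:fp_simple}\subref{fig:fp_cocomp}. Because $G$ is bipartite it contains no triangle, so by the observation above $\sigma$ automatically avoids the pattern in Figure~\ref{fig:fp_simple}\subref{fig:fp_triangle} as well. Thus the single ordering $\sigma$ already avoids all three patterns, and no further work is needed here.

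For the converse, suppose $G$ admits a vertex ordering avoiding all three patterns. Avoiding the patterns in Figures~\ref{fig:fp_simple}\subref{fig:fp_comp} and~\ref{fig:fp_simple}\subref{fig:fp_cocomp} makes $G$ a permutation graph by Theorem~\ref{thm:fp_pg}, while avoiding the triangle pattern makes $G$ triangle-free. It then remains to upgrade triangle-freeness to bipartiteness. Here I would invoke that permutation graphs are perfect (indeed they are comparability graphs), so that $\chi(G) = \omega(G)$; triangle-freeness gives $\omega(G) \le 2$, whence $\chi(G) \le 2$ and $G$ is bipartite. Therefore $G$ is a bipartite permutation graph, completing the equivalence.

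The only genuinely nontrivial step is the implication that a triangle-free permutation graph is bipartite; everything else is bookkeeping about what the patterns encode plus a direct appeal to Theorem~\ref{thm:fp_pg}, and the forward direction is essentially free once one notices that the permutation ordering handles the triangle pattern for us. If one prefers to avoid citing perfection, the same implication can be obtained from the permutation-diagram model: a permutation graph cannot contain an induced odd cycle of length at least $5$, and an odd cycle of length $3$ is a triangle, so triangle-freeness rules out the shortest (necessarily induced) odd cycle and hence every odd cycle, making $G$ bipartite.
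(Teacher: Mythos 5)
Your proof is correct, but note that the paper itself offers no proof of this statement: Theorem~\ref{thm:fp_bpg} is quoted from the literature (\cite{BLS99,Damaschke90-incollection,FH21-SIDMA}) as known background, so there is no internal argument to compare against. Your derivation is a clean way to make it self-contained given Theorem~\ref{thm:fp_pg}: the observation that the all-edges pattern in Figure~\ref{fig:fp_simple}\subref{fig:fp_triangle} is order-invariant (some ordering avoids it if and only if every ordering does, if and only if $G$ is triangle-free) is exactly right, and it correctly reduces both directions to ``bipartite $=$ triangle-free within permutation graphs.'' That last step is also handled correctly: perfection of permutation graphs (they are comparability graphs, hence perfect) gives $\chi(G)=\omega(G)\le 2$, and your alternative via odd cycles is equally valid, since a shortest odd cycle is necessarily induced, permutation graphs are hereditary, and no induced odd cycle of length at least $5$ is a permutation graph. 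The only mild caveat is that either route imports a nontrivial classical fact (perfection, or the exclusion of long odd holes) that is external to the paper, but that is unavoidable here and consistent with how the paper treats this theorem --- as a citation rather than something proved from scratch.
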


\begin{figure}[ht]
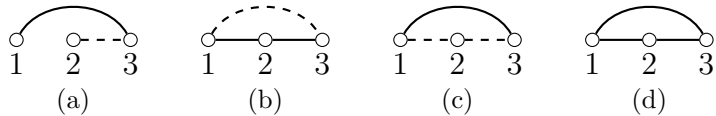

  \centering
  \subcaptionbox{\label{fig:fp_m_int}}{\begin{tikzpicture}
\input{./fig/pattern/template_fp3}
\draw [thick,dashed] (y) -- (z);
\draw [thick] (x) to [out=60, in=120] (z);
\end{tikzpicture}}
  \subcaptionbox{\label{fig:fp_comp}}{\begin{tikzpicture}
\input{./fig/pattern/template_fp3}
\draw [thick] (x) -- (y);
\draw [thick] (y) -- (z);
\draw [thick,dashed] (x) to [out=60, in=120] (z);
\end{tikzpicture}}
  \subcaptionbox{\label{fig:fp_cocomp}}{\begin{tikzpicture}
\input{./fig/pattern/template_fp3}
\draw [thick,dashed] (x) -- (y);
\draw [thick,dashed] (y) -- (z);
\draw [thick] (x) to [out=60, in=120] (z);
\end{tikzpicture}}
  \subcaptionbox{\label{fig:fp_triangle}}{\begin{tikzpicture}
\input{./fig/pattern/template_fp3}
\draw [thick] (x) -- (y);
\draw [thick] (y) -- (z);
\draw [thick] (x) to [out=60, in=120] (z);
\end{tikzpicture}}
  \caption{Forbidden patterns. 
    Lines and dashed lines denote edges and non-edges, respectively. 
    Edges that may or may not be present are not drawn. 
  }
  \label{fig:fp_simple}
\end{figure}

Let $\mathcal{L}_1$ and $\mathcal{L}_2$ be two horizontal lines 
in the $xy$-plane with $\mathcal{L}_1$ above $\mathcal{L}_2$. 
A point on $\mathcal{L}_1$ and an interval on $\mathcal{L}_2$ 
define a triangle between two lines. 
A \emph{simple-triangle graph}~\cite{CK87-CN,Mertzios15-SIAMDM} is 
the intersection graph of such triangles; 
see Figure~\ref{fig:antenna} for example. 
Triangles can be degenerated to lines, as in the example. 
Simple-triangle graphs are exactly the complements of 
$12$-representable graphs~\cite{Takaoka23-DMGT-inpress}. 

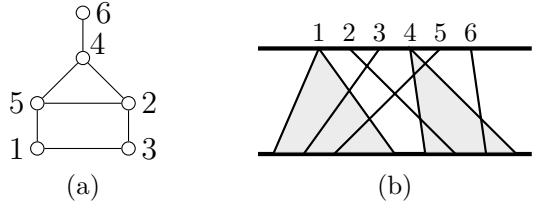
\begin{figure}[ht]
  \centering
  \subcaptionbox{\label{fig:antenna_graph}}{\begin{tikzpicture}
\def\len{0.6}
\useasboundingbox (-2, -0.1) rectangle (2, 2.0);
\tikzstyle{every node}=[draw,circle,fill=white,minimum size=5pt,inner sep=0pt]
\node [label=right:6]  (c) at (0, 3*\len) {};
\node [label=above right:4]  (a) at (0, 2*\len) {};
\node [label=left:5]   (b) at (-1*\len, \len) {};
\node [label=right:3]  (d) at ( 1*\len, 0) {};
\node [label=right:2]  (e) at ( 1*\len, \len) {};
\node [label=left:1]   (f) at (-1*\len, 0) {};
\draw [] 
	(c) -- (a) -- (e) -- (d) -- (f) -- (b) -- (a)
	(e) -- (b)
;
\end{tikzpicture}}
  \subcaptionbox{\label{fig:antenna_model}}{\begin{tikzpicture}
\def\len{0.2}
\tikzstyle{every node}=[minimum size=5pt, inner sep=0pt]
\def\La{7.5*\len}
\def\Lb{0.5*\len}
\draw [ultra thick] (0, \La) -- (18*\len, \La);
\draw [ultra thick] (0, \Lb) -- (18*\len, \Lb);
\def\pa{(4*\len, \La)}
\def\pb{(6*\len, \La)}
\def\pc{(8*\len, \La)}
\def\pd{(10*\len, \La)}
\def\pe{(12*\len, \La)}
\def\pf{(14*\len, \La)}
\node [label=above:{\footnotesize 1}] at \pa {};
\node [label=above:{\footnotesize 2}] at \pb {};
\node [label=above:{\footnotesize 3}] at \pc {};
\node [label=above:{\footnotesize 4}] at \pd {};
\node [label=above:{\footnotesize 5}] at \pe {};
\node [label=above:{\footnotesize 6}] at \pf {};
\def\la{(1*\len, \Lb)}
\def\lb{(13*\len, \Lb)}
\def\lc{(3*\len, \Lb)}
\def\ld{(11*\len, \Lb)}
\def\le{(5*\len, \Lb)}
\def\lf{(15*\len, \Lb)}
\def\ra{(9*\len, \Lb)}
\def\rb{(13*\len, \Lb)}
\def\rc{(3*\len, \Lb)}
\def\rd{(17*\len, \Lb)}
\def\re{(5*\len, \Lb)}
\def\rf{(15*\len, \Lb)}
\def\op{0.5}
\draw [fill=gray!30, opacity=\op] 
	\pa -- \la -- \ra --  cycle
	\pd -- \ld -- \rd --  cycle
	\pb -- \lb -- \rb --  cycle
	\pc -- \lc -- \rc --  cycle
	\pe -- \le -- \re --  cycle
	\pf -- \lf -- \rf --  cycle
;
\draw [thick]
	\pa -- \la -- \ra --  cycle
	\pd -- \ld -- \rd --  cycle
	\pb -- \lb -- \rb --  cycle
	\pc -- \lc -- \rc --  cycle
	\pe -- \le -- \re --  cycle
	\pf -- \lf -- \rf --  cycle
;
\end{tikzpicture}}
  \caption{
    (a) The complement of the graph in Figure~\ref{fig:co-antenna}, and 
    (b) its model. 
  }
  \label{fig:antenna}
\end{figure}

\subsection{12-representable graphs}
The following is a basic characterization of $12$-representable graphs. 
Note that the necessity 
was first presented by Jones \textit{et al.}~\cite{JKPR15-EJC}. 
\begin{theorem}[\cite{Takaoka23-DMGT-inpress}]\label{thm:fp}
A graph is $12$-representable if and only if 
it has a vertex ordering 
that does not contain any pattern in Figure~\ref{fig:fp}. 
\end{theorem}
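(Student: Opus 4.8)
The plan is to reduce $12$-representability with a fixed vertex ordering to a realizability question about interval orders, and then to read off the obstructions as the patterns of Figure~\ref{fig:fp}.

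First I would exploit the fact, due to Jones et al.~\cite{JKPR15-EJC}, that every $12$-representable graph has a $12$-representant $w$ in which each letter occurs at most twice. This lets me attach to each vertex $i$ the interval $I_i=[f_i,\ell_i]$ spanned by the first and last occurrence of the letter $i$ (a point when $i$ occurs once). Since ``every $j$ occurs before every $i$'' means exactly $\ell_j<f_i$, for $i<j$ the vertices $i,j$ are adjacent if and only if $I_j$ lies entirely to the left of $I_i$. Defining the partial order $P$ on $[n]$ by $x<_P y$ whenever $I_x$ is entirely left of $I_y$, this $P$ is an \emph{interval order}, and for all $i<j$ one has $ij\in E \iff j<_P i$. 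Conversely, placing the endpoints of any interval order at distinct positions and reading the corresponding letters from left to right produces a word that $12$-represents the graph it defines. Hence a graph with ordering $1\prec 2\prec\cdots\prec n$ is $12$-representable by this ordering if and only if there is an interval order $P$ on $[n]$ with $ij\in E\iff j<_P i$ for every $i<j$; this reformulation drives both directions.

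For necessity I would take the interval order $P$ produced by a representant and show that each pattern of Figure~\ref{fig:fp}, occurring as an induced ordered subgraph, forces an unsatisfiable constraint on $P$. There are two sources of contradiction, matching the two defining features of interval orders. The first is transitivity: on $x\prec y\prec z$ with $xy,yz\in E$ and $xz\notin E$, the edges force $y<_P x$ and $z<_P y$, hence $z<_P x$, contradicting the non-edge $xz$; one checks that avoidance of this single three-vertex pattern already makes the transitive closure of the edge-forced relations consistent with every non-edge, so longer chains create no new difficulty. The second is the $\mathbf{2}+\mathbf{2}$ obstruction characterizing interval orders: a configuration that forces two disjoint precedences together with the incomparabilities separating them cannot be realized by intervals, and this is what the larger pattern(s) of Figure~\ref{fig:fp} must encode. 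It is essential here that $12$-representable graphs form a \emph{proper} subclass of comparability graphs, so at least one forbidden pattern on four or more vertices is unavoidable; the three-vertex analysis alone yields only the transitivity pattern.

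The harder direction is sufficiency, and I expect it to be the main obstacle. Starting from an ordering that avoids every pattern of Figure~\ref{fig:fp}, I would first take the transitive closure of the relations $j<_P i$ forced by the edges; by the previous paragraph this is a strict partial order that never reverses a non-edge. It then remains to orient each non-edge $\{i,j\}$ with $i<j$ as either $i<_P j$ or as an incomparability, so that the result is an interval order, equivalently is $\mathbf{2}+\mathbf{2}$-free; converting such an order into explicit endpoints finally yields the word. The crux, and the step I expect to be genuinely difficult, is \emph{completeness}: proving that the finitely many patterns listed in Figure~\ref{fig:fp} capture every obstruction to this extension, rather than only the evident ones. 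A safer route I would keep in reserve is to invoke the equivalence, established in~\cite{Takaoka23-DMGT-inpress}, between $12$-representable graphs and complements of simple-triangle graphs, and then transport the known forbidden-pattern characterization of simple-triangle graphs from~\cite{Takaoka18-DM}: exchanging edges with non-edges (and, if necessary, reversing the vertex order) should turn the simple-triangle patterns into those of Figure~\ref{fig:fp}. Either way, establishing completeness of the pattern list is where the real work lies.
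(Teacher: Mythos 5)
Your interval-order reformulation is correct and is a clean way to think about the theorem: with a fixed ordering, $G$ is $12$-representable exactly when there is an interval order $P$ on the vertices with $ij\in E \iff j<_P i$ for all $i<j$, and your necessity argument is sound once the glossed details are filled in (the pattern in Figure~\ref{fig:fp}\subref{fig:fp1} contradicts transitivity of $P$; for the pattern in Figure~\ref{fig:fp}\subref{fig:fp2} the forced relations $3<_P 1$ and $4<_P 2$ require $3<_P 2$ or $4<_P 1$ by $\mathbf{2}+\mathbf{2}$-freeness, both blocked by the non-edges $23$ and $14$; the pattern in Figure~\ref{fig:fp}\subref{fig:fp3} is analogous). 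Note, however, that this paper contains no proof to compare against: Theorem~\ref{thm:fp} is imported from~\cite{Takaoka23-DMGT-inpress}, and the route taken there is essentially your ``fallback'' --- the equivalence with complements of simple-triangle graphs combined with the vertex-ordering characterization of simple-triangle graphs from~\cite{Takaoka18-DM}.

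The genuine gap is the sufficiency direction, and you flag it yourself. Your primary route reduces the problem to: extend the edge-forced partial order (which avoidance of the pattern in Figure~\ref{fig:fp}\subref{fig:fp1} indeed makes transitive) to a $\mathbf{2}+\mathbf{2}$-free order by adding only ascending pairs $i<_P j$ with $i<j$ and $ij\notin E$ --- and then you stop, remarking that proving the three listed patterns capture every obstruction ``is where the real work lies.'' That unproven claim \emph{is} the theorem: nothing in the necessity analysis implies it, since completeness of a finite pattern list is never automatic, and establishing it requires either an explicit construction of interval endpoints from a pattern-avoiding ordering (compare Lemmas~\ref{lem:123-2} and~\ref{lem:132_2} of this paper, which carry out exactly this kind of work for subclasses) or the full machinery of~\cite{Takaoka18-DM}, whose proof is itself substantial. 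The fallback route would close the gap legitimately (it is not circular, as the equivalence with co-simple-triangle graphs is proved independently of the pattern characterization), but as written you have not executed it either: you would need to state the forbidden patterns of~\cite{Takaoka18-DM} precisely and verify that exchanging edges with non-edges, with the correct orientation convention on the ordering, turns them exactly into the three patterns of Figure~\ref{fig:fp}. As it stands, the proposal establishes only the ``only if'' half and otherwise defers to the very source from which the statement is quoted.
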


\begin{figure}[ht]
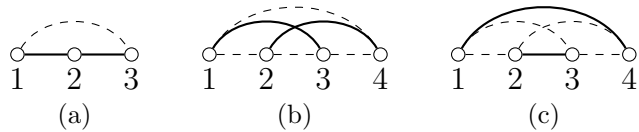

  \centering
  \subcaptionbox{\label{fig:fp1}}{\begin{tikzpicture}
\input{./fig/pattern/template_fp3}
\draw [thick] (x) -- (y);
\draw [thick] (y) -- (z);
\draw [dashed] (x) to [out=60, in=120] (z);
\end{tikzpicture}}
  \subcaptionbox{\label{fig:fp2}}{\begin{tikzpicture}
\input{./fig/pattern/template_fp4}
\draw [thick] (x) to [out=60, in=120] (z);
\draw [thick] (y) to [out=60, in=120] (w);
\draw [dashed] (x) to [out=60, in=120] (w);
\draw [dashed] (x) -- (y);
\draw [dashed] (y) -- (z);
\draw [dashed] (z) -- (w);
\end{tikzpicture}}
  \subcaptionbox{\label{fig:fp3}}{\begin{tikzpicture}
\input{./fig/pattern/template_fp4}
\draw [thick] (x) to [out=60, in=120] (w);
\draw [thick] (y) to (z);
\draw [dashed] (x) -- (y);
\draw [dashed] (x) to [out=60, in=120] (z);
\draw [dashed] (y) to [out=60, in=120] (w);
\draw [dashed] (z) -- (w);
\end{tikzpicture}}
  \caption{Forbidden patterns of $12$-representable graphs. }
  \label{fig:fp}
\end{figure}

We also need the following results. 
\begin{theorem}[\cite{JKPR15-EJC}]\label{thm:at most twice}
If a labeled graph is $12$-representable, then 
there is a $12$-representant in which each letter occurs at most twice. 
\end{theorem}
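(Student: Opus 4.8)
The plan is to start from an arbitrary $12$-representant $w$ of the given labeled graph $G$ and to delete, for every letter, all of its occurrences except the first and the last. First I would reformulate the adjacency condition so that it refers only to these two occurrences. For a letter $i$, write $f(i)$ and $\ell(i)$ for the positions of its first and last occurrence in $w$. For $i < j$, the requirement that every $j$ occurs before every $i$ is equivalent to the single inequality $\ell(j) < f(i)$: if the last $j$ precedes the first $i$, then certainly every $j$ precedes every $i$, and conversely every $j$ preceding every $i$ forces $\ell(j) < f(i)$. Since $i \neq j$ occupy distinct positions, the complementary case $ij \notin E(G)$ is exactly $f(i) < \ell(j)$, so the entire adjacency relation of $G$ is determined by the numbers $f(\cdot)$ and $\ell(\cdot)$ alone.

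Next I would let $w'$ be the word obtained from $w$ by keeping, for each letter, only its first and last occurrence, and deleting every occurrence strictly between them. Every letter still appears (its first occurrence is retained), so $w'$ contains at least one copy of each letter, and by construction each letter now occurs at most twice---exactly once when $f(i) = \ell(i)$ and exactly twice otherwise.

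It then remains to check that $w'$ still $12$-represents $G$. The key observation is that passing from $w$ to $w'$ only deletes occurrences, so the relative order of any two retained positions is unchanged; moreover, since we never delete a first or last occurrence, the first (resp.\ last) occurrence of each letter $i$ in $w'$ is precisely the image of its first (resp.\ last) occurrence in $w$. Hence the inequality $\ell(j) < f(i)$ holds in $w'$ exactly when it holds in $w$, and by the reformulation above the adjacency relations of the graphs $12$-represented by $w$ and by $w'$ coincide. Therefore $w'$ is a $12$-representant of $G$ in which each letter occurs at most twice.

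There is no genuinely hard step here; the only point requiring care is the verification that deleting the intermediate occurrences does not disturb which occurrence is first or last, so that the order statistics $f(i)$ and $\ell(i)$ that govern adjacency are preserved. Once the adjacency condition is phrased purely in terms of these first and last occurrences, the reduction is immediate.
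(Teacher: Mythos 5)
Your proposal is correct and matches the paper's approach: the proof given there (reproduced in the proof of Proposition~\ref{prop:labeled induced subgraph}) likewise deletes, for each letter occurring more than twice, every copy lying strictly between its first and last occurrences, and observes that the resulting word still $12$-represents the graph. Your reformulation of adjacency via the positions $f(i)$ and $\ell(j)$ is just a more explicit write-up of the same verification.
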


\begin{theorem}[\cite{JKPR15-EJC}]\label{thm:permutation}
A graph is $12$-representable by a permutation if and only if it is a permutation graph. 
\end{theorem}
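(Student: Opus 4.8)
The plan is to prove that the two definitions coincide almost verbatim, once we observe that a permutation is precisely a word in which each letter of $[n]$ occurs exactly once. First I would fix a labeled graph $G$ with vertex set $[n]$, recalling the paper's convention that a labeling of an unlabeled graph induces, and is induced by, a vertex ordering, so that it suffices to argue at the level of labeled graphs with label set $[n]$. The whole argument then hinges on a single reduction: when a word $w$ over $[n]$ is a permutation, the quantified clause ``every $j$ occurs before every $i$ in $w$'' that appears in the definition of $12$-representability collapses to the single precedence statement ``$j$ occurs before $i$ in $w$,'' because there is exactly one copy of each letter.

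For the forward direction, suppose $G$ is $12$-represented by a permutation $w$ over $[n]$. By the reduction above, for any two labels $i < j$ the adjacency rule of $G$ reads: $i$ and $j$ are adjacent if and only if $j$ occurs before $i$ in $w$. Setting $\pi = w$, this is exactly the defining condition of the permutation graph of $\pi$, so $G$ is a permutation graph.

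For the converse, suppose $G$ is the permutation graph of some permutation $\pi$ over $[n]$. Reading $\pi$ as a word $w := \pi$, which is a permutation and hence contains each letter of $[n]$ exactly once, the same reduction applied in reverse shows that ``$j$ precedes $i$ in $\pi$'' is equivalent to ``every $j$ occurs before every $i$ in $w$.'' Thus $w$ is a $12$-representant of $G$ that happens to be a permutation, and $G$ is $12$-representable by a permutation.

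I expect no substantial obstacle here, since the equivalence is essentially a restatement of definitions. The only point requiring care is the bookkeeping between the labeled-graph formulation of $12$-representability and the vertex-set-$[n]$ formulation of permutation graphs, together with the observation that the ``every $\ldots$ before every $\ldots$'' condition degenerates to a single precedence exactly when each letter is unique. Once these are aligned, the argument is immediate and symmetric in the two directions.
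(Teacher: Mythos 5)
Your proof is correct: since a permutation contains each letter of $[n]$ exactly once, the ``every $j$ occurs before every $i$'' clause in the definition of $12$-representability collapses to a single precedence ``$j$ occurs before $i$,'' which is word-for-word the adjacency rule defining the permutation graph of that permutation, and your handling of the labeled/unlabeled bookkeeping is adequate. Note that the paper does not prove this statement itself --- it is quoted from Jones \textit{et al.}~\cite{JKPR15-EJC} --- and your definitional unwinding is precisely the standard argument behind the cited result, so there is nothing to add.
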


\begin{theorem}[\cite{JKPR15-EJC}]\label{thm:cycle}
Any cycle of length at least 5 is not $12$-representable. 
\end{theorem}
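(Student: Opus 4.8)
The plan is to show that any cycle $C_n$ with $n \geq 5$ admits no vertex ordering avoiding all three patterns of Figure~\ref{fig:fp}, and then invoke Theorem~\ref{thm:fp}. So I fix an arbitrary labeling of $C_n$, i.e.\ a vertex ordering $\prec$, and I seek a forbidden pattern. Throughout I will use that $C_n$ is $2$-regular, so every vertex has exactly two neighbors on the cycle, and every triple of vertices spans at most two edges (a triangle would force $n=3$). The first pattern~\subref{fig:fp1} is the ordered path $x \prec y \prec z$ with edges $xy, yz$ present and $xz$ absent; since $C_n$ is triangle-free for $n \geq 4$, \emph{any} two incident edges $xy$ and $yz$ automatically form this pattern unless the middle vertex $y$ is the extreme (smallest or largest) element of $\{x,y,z\}$ in $\prec$. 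Thus to avoid pattern~\subref{fig:fp1} every vertex $y$ must be a local extremum with respect to its two cycle-neighbors: for each vertex $y$ with neighbors $a,b$, either $y \prec a$ and $y \prec b$, or $a \prec y$ and $b \prec y$.

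Next I translate this local-extremum condition into a global statement about the ordering. Label the cycle vertices $v_1, v_2, \ldots, v_n$ in cyclic order. The condition says that as we traverse the cycle, the relative order (with respect to $\prec$) strictly alternates between ``rising'' and ``falling'' at each vertex; equivalently, no vertex may be a middle value of its two neighbors. Since this alternation must be consistent all the way around a \emph{closed} walk, the number of vertices $n$ must be even (a closed alternating sequence of local minima and local maxima forces an even number of vertices). So for odd $n \geq 5$ no vertex ordering even avoids the single pattern~\subref{fig:fp1}, and we are done immediately by Theorem~\ref{thm:fp}. This disposes of the odd case cheaply and isolates the real work in the even case.

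For even $n \geq 6$ the local-extremum (alternation) condition is satisfiable, so pattern~\subref{fig:fp1} alone is not enough; here I expect the main obstacle. The plan is to consider the global minimum vertex $m$ and the global maximum vertex $M$ of the ordering, both of which are local minima / maxima and hence consistent with the alternation. Because $n \geq 6$, the two neighbors of $m$ together with the two neighbors of $M$, plus $m$ and $M$ themselves, involve enough vertices to force a configuration matching one of the four-vertex patterns~\subref{fig:fp2} or~\subref{fig:fp3}. Concretely, I would take $m$ and its two neighbors $a \prec b$ (both larger than $m$), and $M$ with its neighbors, and examine how the edges $am$, $bm$ interleave in $\prec$ with the edges incident to $M$ and with the non-edges forced by triangle-freeness and by the cycle having no chords. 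The aim is to exhibit four vertices whose induced ordered subgraph is exactly the two-edge/four-non-edge pattern of~\subref{fig:fp2} (two ``crossing'' edges $13,24$ with all consecutive pairs non-adjacent) or the pattern of~\subref{fig:fp3}.

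The crux of the even case is a careful case analysis of where the neighbors of $m$ and $M$ fall in the ordering; the induced subgraph on any four cycle-vertices is a matching, a single edge, or a path $P_2/P_3$, and the alternation condition severely restricts which ordered matchings can appear. I would argue that the alternating structure forces, somewhere along the cycle, either two independent edges $x_1x_3$ and $x_2x_4$ with $x_1 \prec x_2 \prec x_3 \prec x_4$ and no other adjacencies (pattern~\subref{fig:fp2}) or the ``nested'' configuration of pattern~\subref{fig:fp3}, using that chordlessness of $C_n$ for $n \geq 5$ guarantees the required non-edges among non-consecutive vertices. Once such four vertices are produced, Theorem~\ref{thm:fp} yields that the labeling is not $12$-representable, and since the labeling was arbitrary, no cycle of length at least $5$ is $12$-representable.
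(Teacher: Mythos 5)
First, note that the paper does not prove this statement at all: Theorem~\ref{thm:cycle} is quoted from Jones \emph{et al.}~\cite{JKPR15-EJC}, so your argument has to stand entirely on its own. Its first half does: reducing to Theorem~\ref{thm:fp}, observing that triangle-freeness of $C_n$ ($n\geq 4$) makes every path $a$--$y$--$b$ a potential copy of the pattern in Figure~\ref{fig:fp}\subref{fig:fp1}, concluding that every vertex must be a local extremum among its two cycle-neighbors, and noting that this $2$-colors the cycle into local minima and local maxima, which is impossible for odd $n$. That disposes of odd cycles correctly.

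The even case, however, is a genuine gap, in two respects. First, it is only a plan, not a proof: phrases such as ``I would take,'' ``I would argue,'' and ``the aim is to exhibit'' replace the case analysis that you yourself identify as ``the real work.'' Second, and more seriously, the specific localization you propose --- finding a copy of the pattern in Figure~\ref{fig:fp}\subref{fig:fp2} or~\subref{fig:fp3} among the global minimum $m$, the global maximum $M$, and their four neighbors --- provably cannot work once $n\geq 8$. Take $C_8$ with vertices $v_1,\ldots,v_8$ in cyclic order and the vertex ordering $v_1 \prec v_7 \prec v_8 \prec v_3 \prec v_2 \prec v_5 \prec v_6 \prec v_4$. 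Every vertex is a local extremum, $m=v_1$, $M=v_4$, and the six vertices $\{v_1,v_4\}\cup N(v_1)\cup N(v_4) = \{v_1,v_2,v_3,v_4,v_5,v_8\}$ induce, in the ordering $v_1 \prec v_8 \prec v_3 \prec v_2 \prec v_5 \prec v_4$, no copy of any pattern of Figure~\ref{fig:fp}: the three induced $2K_2$'s ($v_1v_2$ with $v_4v_5$; $v_1v_8$ with $v_3v_4$; $v_1v_8$ with $v_4v_5$) are all ``sequential,'' i.e., one edge lies entirely before the other. A forbidden pattern does exist in this ordering --- e.g.\ $v_1 \prec v_7 \prec v_2 \prec v_6$ with edges $v_1v_2$, $v_6v_7$ is a copy of the pattern in Figure~\ref{fig:fp}\subref{fig:fp2} --- but it necessarily involves vertices outside your six. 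So completing the proof requires a different global argument (for instance, analyzing how arbitrary pairs of independent cycle edges interleave around the whole cycle), not merely filling in details of the min/max scheme as described.
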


\subsection{$p$-representable graphs}
We prove the basic properties of $p$-representable graphs. 

\begin{proposition}\label{prop:p=2}
A $p$-representable graph is complete if $p = 12$ and edgeless if $p = 21$. 
A $p$-representable graph is a permutation graph if $p = 11$. 
\end{proposition}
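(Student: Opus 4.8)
The plan is to translate $p$-avoidance of the $12$-representant into a structural description of the word itself for each of the three patterns, and then read off the adjacencies directly from that structure using the definition of $12$-representability. For each case I would fix a $p$-avoiding word $w$ over $[n]$ that $12$-represents the graph $G$, and test an arbitrary pair of vertices $i < j$ by inspecting $w_{\{i,j\}}$, recalling that $i$ and $j$ are adjacent if and only if $w_{\{i,j\}}$ avoids the pattern $12$.

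First, for $p = 12$, a word over $[n]$ avoids the pattern $12$ precisely when it is weakly decreasing, i.e., no smaller letter precedes a larger one. Hence for any $i < j$ every copy of $j$ precedes every copy of $i$ in $w$, so $w_{\{i,j\}}$ avoids $12$ and $i, j$ are adjacent; as this holds for every pair, $G$ is complete. Dually, for $p = 21$, a $21$-avoiding word is weakly increasing, so for any $i < j$ every copy of $i$ precedes every copy of $j$; thus $w_{\{i,j\}}$ contains $12$, the pair $i,j$ is non-adjacent, and $G$ is edgeless. To obtain the exact equalities with the classes of complete and edgeless graphs, I would note the converses by exhibiting explicit witnesses: the decreasing permutation $n\,(n-1)\cdots 1$ is $12$-avoiding and $12$-represents $K_n$, while the identity permutation $1\,2\cdots n$ is $21$-avoiding and $12$-represents the edgeless graph.

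Third, for $p = 11$, the key observation is that $\red(xy) = 11$ exactly when $x = y$, so a word avoids the pattern $11$ if and only if no letter is repeated. Combined with the requirement that a $12$-representant contains at least one copy of each letter of $[n]$, such a word is a permutation of $[n]$. Consequently the $11$-representable graphs are exactly the graphs that are $12$-representable by a permutation, and Theorem~\ref{thm:permutation} identifies these precisely as the permutation graphs.

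This statement is essentially a short sequence of elementary observations, so there is no genuine technical obstacle; the only points requiring care are to keep the orientation of the patterns and the direction of the inequalities straight when passing between ``$w_{\{i,j\}}$ avoids $12$'' and adjacency, and to remember that the representant must contain every letter at least once, which is what forces an $11$-avoiding representant to be a full permutation of $[n]$ rather than a word on a proper subset.
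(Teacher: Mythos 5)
Your proposal is correct and follows essentially the same route as the paper: identify $12$-avoiding words as (weakly) decreasing, $21$-avoiding words as (weakly) increasing, and $11$-avoiding words as permutations, then invoke Theorem~\ref{thm:permutation} for the last case. The extra converse witnesses you provide (the decreasing and identity permutations) are harmless but not needed, since the proposition only asserts one direction.
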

\begin{proof}
A $12$-avoiding (resp. $21$-avoiding) word over $[n]$ is 
a descending (resp. ascending) sequence of the letters. 
Thus, every (resp. no) pair of vertices are adjacent 
in a graph $12$-represented by a $12$-avoiding (resp. $21$-avoiding) word. 
A $11$-avoiding word over $[n]$ is a permutation of $[n]$. 
Thus, the second statement follows from Theorem~\ref{thm:permutation}. 
\end{proof}

A claim similar to Theorem~\ref{thm:at most twice} holds for $p$-representable graphs. 
\begin{proposition}\label{prop:labeled induced subgraph}
If a labeled graph is $p$-representable, 
then there is a $p$-avoiding $12$-representant 
in which each letter occurs at most twice. 
\end{proposition}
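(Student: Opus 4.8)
The plan is to take an arbitrary $p$-avoiding $12$-representant and reduce the multiplicity of each letter \emph{purely by deleting occurrences}, exploiting the fact that deletion can never create a forbidden pattern. Concretely, since the labeled graph $G$ is $p$-representable, by definition there is a $p$-avoiding word $w$ over $[n]$ that $12$-represents $G$. First I would define $w'$ to be the word obtained from $w$ by deleting, for every letter, all of its occurrences except the first and the last. By construction each letter occurs at most twice in $w'$.

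Next I would verify that $w'$ still $12$-represents $G$. For $i < j$, recall that $i$ and $j$ are adjacent if and only if every $j$ precedes every $i$, which is equivalent to saying that the last occurrence of $j$ precedes the first occurrence of $i$; dually, $i$ and $j$ are non-adjacent exactly when the first occurrence of $i$ precedes the last occurrence of $j$. Since these two positions are distinct (they carry different letters) and since $w'$ retains precisely the first and last occurrence of each letter, the relative order of the first occurrence of $i$ and the last occurrence of $j$ is the same in $w'$ as in $w$, for every pair $i < j$. Hence adjacency in $G$ is determined identically by $w$ and by $w'$, so $w'$ is a $12$-representant of $G$ in which each letter occurs at most twice. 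This step essentially reproves, via an explicit deletion, the conclusion of Theorem~\ref{thm:at most twice}.

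Finally, I would observe that $w'$ avoids $p$. Because $w'$ arises from $w$ only by deleting occurrences, it is a subsequence of $w$; any occurrence of $p$ in $w'$ (a choice of positions whose reduced form equals $p$) lifts to the corresponding positions in $w$, yielding an occurrence of $p$ in $w$ and contradicting the $p$-avoidance of $w$. Note that this subsequence argument is valid for every pattern $p$ with $\red(p) = p$, including patterns with repeated letters such as $111$ or $211$, since it only uses the position-based definition of containment. Thus $w'$ avoids $p$, and $w'$ is the desired representant.

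The only point that genuinely requires care is that the reduction to at most two occurrences be carried out \emph{by deletion alone}. If one invoked Theorem~\ref{thm:at most twice} as a black box, the resulting at-most-twice representant need not be a subsequence of $w$, and $p$-avoidance could be destroyed; it is precisely the explicit ``keep the first and last occurrence'' construction that simultaneously guarantees the preserved $12$-representation and, through the subsequence argument, the preserved $p$-avoidance.
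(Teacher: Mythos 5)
Your proof is correct and takes essentially the same approach as the paper: both reduce each letter to its first and last occurrence by deletion, observe that this preserves the $12$-representation, and use the fact that a subsequence of a $p$-avoiding word is still $p$-avoiding. The paper carries out the deletion letter by letter, invoking the argument of Theorem~\ref{thm:at most twice}, while you do it in one step with an explicit verification, but the underlying idea is identical.
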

\begin{proof}
The proof is the same as that of Theorem~\ref{thm:at most twice}. 
Let $G$ be a graph and $w$ be a $p$-avoiding representant of $G$. 
Suppose that a letter $i$ occurs more than twice. 
Let $w = w_1iw_2iw_3$, where $w_1$, $w_2$, and $w_3$ are subwords of $w$ such that 
$w_1$ and $w_3$ contain no copies of $i$. 
Each copy of $i$ in $w_2$ can be omitted; that is, 
the word $w'$ obtained from $w$ by removing all copies of $i$ in $w_2$ 
is a $12$-representant of $G$ and still $p$-avoiding. 
Performing the same procedure with all other letters 
occurring more than twice in $w'$ yields a required word. 
\end{proof}
Thus, in the following, we assume that every $12$-representant contains at most two letters. 

The classes of $p$-representable graphs are hereditary. 
\begin{proposition}\label{prop:labeled induced subgraph}
If a labeled graph $G$ is $p$-representable, 
then every induced subgraph of $G$ is $p$-representable. 
\end{proposition}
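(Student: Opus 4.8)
The plan is to start from a labeled graph $G$ that is $p$-representable, fix a $p$-avoiding $12$-representant $w$ of $G$ over the alphabet $[n] = V(G)$, and then show that the restriction of $w$ to any induced subgraph yields a $p$-avoiding $12$-representant of that subgraph. Concretely, let $H$ be an induced subgraph of $G$ with $V(H) = S \subseteq V(G)$. I would consider the word $w_S$, that is, the word obtained from $w$ by deleting every letter in $[n] \setminus S$. Since $S$ carries the labels (and hence the induced vertex ordering) inherited from $G$, the word $w_S$ is a word over the label set of $H$.

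First I would verify that $w_S$ is a $12$-representant of $H$. This is the heart of the adjacency-preservation argument, and it is essentially immediate from the definitions: for two vertices $i, j \in S$ with $i < j$, whether they are adjacent in $H$ is determined by whether $(w_S)_{\{i,j\}}$ avoids the pattern $12$. But deleting letters in two stages is the same as deleting them in one stage, so $(w_S)_{\{i,j\}} = w_{\{i,j\}}$. Hence $i$ and $j$ are adjacent in $H$ exactly when they are adjacent in $G$, and because $H$ is an \emph{induced} subgraph of $G$, this is precisely the adjacency relation of $H$. I should also note that $w_S$ contains at least one copy of each letter of $S$, which holds because $w$ contains at least one copy of each letter of $[n]$ and we only deleted letters outside $S$.

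Second I would check that $w_S$ is still $p$-avoiding. The key observation is that any occurrence of the pattern $p$ inside $w_S$ is witnessed by a subsequence of $w_S$, and every subsequence of $w_S$ is also a subsequence of $w$ (deleting letters only removes subsequences, never creates new ones). Therefore if $w$ avoids $p$, so does $w_S$. Combining the two steps shows that $w_S$ is a $p$-avoiding $12$-representant of $H$, so $H$ is $p$-representable.

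I do not expect a genuine obstacle here, since the statement is really a bookkeeping argument about deletion commuting with restriction. The one point requiring a little care is the role of labeling: $p$-representability is a property of \emph{labeled} graphs (equivalently, ordered graphs), so I must be careful that the induced subgraph $H$ inherits the labels of $G$, and that pattern avoidance is preserved under the reduction $\red$ implicit in the definition of ``contains the pattern $p$.'' Since $\red$ depends only on the relative order of the letters, and the relative order among the letters of $S$ is the same in $w_S$ as in $w$, no occurrence of $p$ can appear in $w_S$ that was not already present in $w$. This makes the final verification routine.
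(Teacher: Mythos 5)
Your proof is correct and follows exactly the paper's approach: restrict the $p$-avoiding $12$-representant $w$ to $w_{V(H)}$ and observe that it remains both a $12$-representant of $H$ and $p$-avoiding. The paper states this in one line; you have simply filled in the routine verifications (deletion commutes with restriction, subsequences of $w_S$ are subsequences of $w$), which is fine.
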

\begin{proof}
Let $H$ be an induced subgraph of $G$. 
If $w$ is a $p$-avoiding representant of $G$, 
then $w_{V(H)}$ is a $12$-representant of $H$ and still $p$-avoiding. 
\end{proof}

There exist two different patterns $p$ and $p'$ such that 
a graph is $p$-representable if and only if $p'$-representable. 
The following is an example. 
The \emph{reverse} of a word $w = w_1 w_2 \cdots w_k$ over $[n]$ is the word $r(w) = w_k w_{k-1} \cdots w_1$. 
The \emph{complement} of $w$ is the word $c(w) = (n + 1 - w_1) (n + 1 - w_2) \cdots (n + 1 - w_k)$. 
\begin{proposition}\label{prop:reverse_supplement}
A graph is $p$-representable if and only if it is $c(r(p))$-representable. 
\end{proposition}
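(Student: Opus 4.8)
The plan is to pass through the word operation $w \mapsto c(r(w))$, which is an involution: since $r$ and $c$ each square to the identity and commute, we have $c(r(c(r(w)))) = w$, and likewise $c(r(c(r(p)))) = p$. Because of this, it suffices to prove a single implication, namely that whenever $w$ is a $p$-avoiding $12$-representant of $G$, the word $c(r(w))$ is a $c(r(p))$-avoiding $12$-representant of a graph isomorphic to $G$. The converse then follows by applying this implication with $p$ replaced by $c(r(p))$.

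First I would record the standard behaviour of $r$ and $c$ with respect to pattern containment: a word $w$ contains $p$ if and only if $r(w)$ contains $r(p)$, and $w$ contains $p$ if and only if $c(w)$ contains $c(p)$. Both facts hold because reversal (respectively complementation) commutes with the reduction operation $\red$, so an occurrence of the reduced pattern inside $w$ maps bijectively to an occurrence of its reverse (respectively its complement) inside $r(w)$ (respectively $c(w)$). Composing the two equivalences yields that $w$ avoids $p$ if and only if $c(r(w))$ avoids $c(r(p))$, which settles the pattern-avoidance half of the claim with no real difficulty.

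The substantive step is to check that $c(r(w))$ still $12$-represents $G$ up to relabeling. Writing $\sigma(x) = n + 1 - x$ for the letters of $[n]$, I would trace how the two operations act on the adjacency test ``every $b$ precedes every $a$'' for a pair $a < b$. Complementation relabels each letter $x$ as $\sigma(x)$, which reverses the roles of ``smaller'' and ``larger'', so the pair $a < b$ in $c(r(w))$ corresponds to the pair $\sigma(a) > \sigma(b)$ in $r(w)$; reversal, on the other hand, exchanges ``precedes'' and ``follows''. The goal is to show that these two sign changes cancel, giving
\[
a \sim_{c(r(w))} b \iff \{\sigma(a), \sigma(b)\} \in E(G),
\]
where $\sim$ denotes adjacency in the represented graph. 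This would show that $c(r(w))$ $12$-represents the graph obtained from $G$ by relabeling each vertex $x$ as $\sigma(x)$; since that relabeled graph is isomorphic to $G$, we conclude that $G$ is $c(r(p))$-representable.

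The main obstacle is precisely the bookkeeping in this last step: one must confirm that the order-reversal of labels induced by $c$ and the order-reversal of positions induced by $r$ interact so as to \emph{preserve}, rather than complement, the adjacency relation. A useful sanity check, which I would verify in passing, is that each of $r(w)$ and $c(w)$ individually $12$-represents the complement of $G$; this makes transparent why their composition returns $G$ and guards against an off-by-one or sign error in the cancellation.
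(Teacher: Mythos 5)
Your main argument is correct and is essentially the paper's own proof: the paper composes the same two operations, routing through an intermediate notion (it calls $r(w)$ an $r(p)$-avoiding \emph{$21$-representant} of $G$, meaning $i < j$ are adjacent iff every $i$ occurs before every $j$), and then observes that $c(r(w))$ is a $c(r(p))$-avoiding $12$-representant of the relabeled graph $c(G)$, which is the same unlabeled graph as $G$. Your displayed equivalence $a \sim_{c(r(w))} b \iff \sigma(a)\sigma(b) \in E(G)$ is exactly that statement, and the pattern-avoidance half is handled identically, so the two proofs differ only in presentation (you reduce to one implication via the involution; the paper writes a chain of equivalences).

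However, your closing ``sanity check'' is false, and you should drop it rather than ``verify it in passing'': neither $r(w)$ nor $c(w)$ $12$-represents $\overline{G}$ in general. The reason is that the negation of ``every $j$ occurs before every $i$'' is ``some $i$ occurs before some $j$,'' not ``every $i$ occurs before every $j$,'' so neither reversing positions nor reversing labels complements the represented graph. Concretely, $w = 212$ $12$-represents the edgeless graph $G$ on $\{1,2\}$, while $r(w) = 212$ and $c(w) = 121$ again $12$-represent the edgeless graph, not $\overline{G} = K_2$. What is true (and is how the paper phrases the intermediate step) is that $r(w)$ is a $21$-representant of the \emph{same} graph $G$, and $c(w)$ is a $21$-representant of $c(G)$; applying the second operation then converts $21$-representation back into $12$-representation. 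This is why your cancellation computation succeeds even though the intermediate words do not represent the complement, and had you leaned on the sanity check as the justification for the cancellation, the argument would have been wrong.
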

\begin{proof}
Let $G$ be a labeled graph with $n$ vertices. 
The graph $G$ is said to be \emph{$21$-representable} by a word $w$ if 
two vertices $i$ and $j$ with $i < j$ are adjacent 
if and only if every $i$ occurs before every $j$ in $w$. 
The \emph{supplement} of $G$, denoted by $c(G)$, is a labeled graph 
obtained by relabeling $G$ so that each label $i$ is replaced by $n+1-i$. 
Now, a word $w$ is a $p$-avoiding $12$-representant of $G$ if and only if 
$r(w)$ is an $r(p)$-avoiding $21$-representant of $G$ if and only if 
$c(r(w))$ is a $c(r(p))$-avoiding $12$-representant of $c(G)$. 
Since $G$ and $c(G)$ are the same graph with different labels, the claim holds. 
\end{proof}

Proposition~\ref{prop:reverse_supplement} yields the following corollary, 
which means there are at most eight nonequivalent classes 
of $p$-representable graphs if $p$ is of length $3$. 
\begin{corollary}\label{cor:reverse_supplement}
Two classes of $p$-representable graphs are equivalent in the following cases: 
$p = 112$ and $p = 122$; 
$p = 121$ and $p = 212$; 
$p = 211$ and $p = 221$; 
$p = 132$ and $p = 213$; 
$p = 231$ and $p = 312$. 
\end{corollary}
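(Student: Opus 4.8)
The plan is to apply Proposition~\ref{prop:reverse_supplement} to each length-$3$ pattern and simply compute $c(r(p))$, verifying that the five listed pairs are exactly the nontrivial coincidences. Recall that for a pattern $p = p_1 p_2 p_3$ over an alphabet drawn from $\{1,2,3\}$ (with $\red(p) = p$), the reverse is $r(p) = p_3 p_2 p_1$ and the complement replaces each letter $i$ by $4 - i$ when three distinct letters appear, and more generally by $(\max + \min - i)$ according to the reduced form. The key observation is that the operation $p \mapsto c(r(p))$ is an involution on the set of length-$3$ patterns, so the patterns split into orbits of size $1$ or $2$; the size-$2$ orbits are precisely the equivalences we must list.

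First I would enumerate the length-$3$ patterns with $\red(p) = p$. These are the patterns using either one symbol, two symbols, or three symbols: the strictly-monotone and repeated patterns among $\{111, 112, 121, 122, 211, 212, 221, 123, 132, 213, 231, 312, 321\}$. For each, I compute $r(p)$ and then $c(r(p))$. For example, with $p = 112$: $r(p) = 211$, and complementing $211$ (whose reduced form uses letters $1$ and $2$, so $i \mapsto 3 - i$) gives $122$; hence $112$ and $122$ are equivalent. Similarly $121 \mapsto r = 121 \mapsto c = 212$; $211 \mapsto r = 112 \mapsto c = 221$; $132 \mapsto r = 231 \mapsto c = 213$; and $231 \mapsto r = 132 \mapsto c = 312$. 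Each of these is one of the five claimed pairs.

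Next I would confirm that the remaining patterns are fixed points of $c \circ r$, so that they generate no further equivalences via this proposition: $111 \mapsto 111$, $123 \mapsto r = 321 \mapsto c = 123$, and $321 \mapsto r = 123 \mapsto c = 321$. Wait---$123$ and $321$ each map to themselves, so they are fixed and contribute no pair. Thus the only size-$2$ orbits are exactly the five pairs stated, and the corollary follows directly by instantiating Proposition~\ref{prop:reverse_supplement} at $p = 112, 121, 211, 132, 231$ respectively. Since each equivalence is symmetric and $c \circ r$ is an involution, listing one representative per pair suffices.

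The routine but error-prone part---and the only real place to be careful---is getting the complement right when a pattern uses fewer than three distinct symbols, since then the complement is taken with respect to the reduced alphabet rather than $\{1,2,3\}$; for instance $c(211)$ must be read as complementing a two-letter pattern to get $122$ and not as $(4-2)(4-1)(4-1) = 233$. As long as the complement is applied after reducing (equivalently, using $\max + \min - i$ over the letters actually present), every computation is a one-line check, and I expect no genuine obstacle beyond this bookkeeping.
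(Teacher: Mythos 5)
Your proposal is correct and matches the paper's (implicit) argument exactly: the corollary is obtained by instantiating Proposition~\ref{prop:reverse_supplement} at each length-$3$ pattern and computing $c(r(p))$, with the complement taken over the pattern's own reduced alphabet (so $c(211)=122$, not $233$), which is precisely the subtlety you flagged and handled correctly. Your extra check that $111$, $123$, $321$ are fixed points of $c\circ r$ is consistent with the paper's remark that there are at most eight nonequivalent classes.
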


\section{$p$-representable graphs equivalent to known classes}\label{sec:simple cases}
In this section, we show that 
$111$, $121$, $231$, and $321$-representable graphs are precisely 
$12$-representable graphs, 
permutation graphs, 
trivially perfect graphs, and 
bipartite permutation graphs, respectively.

\subsection{111-representable graphs}
\begin{theorem}
A graph is $12$-representable by a $111$-avoiding word 
if and only if it is $12$-representable. 
\end{theorem}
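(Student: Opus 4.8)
The plan is to prove both directions of the equivalence, where one direction is immediate and the other requires converting an arbitrary $12$-representant into a $111$-avoiding one. The nontrivial content is that restricting to $111$-avoiding words costs nothing for $12$-representability.

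\medskip

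First I would dispose of the easy direction: if a graph $G$ is $111$-representable, then by definition it is $12$-represented by some word, hence it is $12$-representable. This requires no argument beyond unwinding the definitions.

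\medskip

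For the converse, suppose $G$ is $12$-representable. The natural starting point is Theorem~\ref{thm:at most twice}: there is a $12$-representant $w$ in which each letter occurs \emph{at most twice}. I would take such a $w$ and observe that a word in which every letter occurs at most twice automatically avoids the pattern $111$, since $111$ requires three occurrences of the same letter. Thus $w$ is already $111$-avoiding, and $G$ is $111$-representable. The entire converse reduces to invoking Theorem~\ref{thm:at most twice} and making this one-line observation about what $111$-avoidance means. Since the statement concerns unlabeled $12$-representability (a graph is $12$-representable if \emph{some} labeling works), and Theorem~\ref{thm:at most twice} applies to the labeled graph arising from a working labeling, the argument transfers directly: pick a labeling witnessing $12$-representability, apply the theorem to get an at-most-twice representant, and conclude.

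\medskip

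I do not anticipate a genuine obstacle here; the result is essentially a corollary of Theorem~\ref{thm:at most twice} once one notices that ``each letter occurs at most twice'' is a strictly stronger condition than ``the word avoids $111$.'' The only point requiring a touch of care is keeping the labeled-versus-unlabeled distinction straight, as flagged in the paper's discussion of how labelings matter for pattern-avoiding representations; but because we are free to choose the labeling for an unlabeled graph and Theorem~\ref{thm:at most twice} holds for any $12$-representable labeled graph, this causes no difficulty.
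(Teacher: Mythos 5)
Your proposal is correct and matches the paper's proof exactly: the paper also dispatches one direction as obvious and cites Theorem~\ref{thm:at most twice} for the other, with the same observation that a word in which each letter occurs at most twice cannot contain the pattern $111$. No differences worth noting.
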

\begin{proof}
The necessity is obvious. 
Theorem~\ref{thm:at most twice} indicates the sufficiency. 
\end{proof}

\subsection{121-representable graphs}
\begin{lemma}\label{lem:121}
A labeled graph $G$ is $12$-representable by a $121$-avoiding word 
if and only if there is a permutation $12$-representing $G$. 
\end{lemma}
\begin{proof}
The sufficiency is obvious, and we will prove the necessity. 
Let $w$ be a $121$-avoiding representant of $G$. 
Suppose that a letter $i$ occurs twice in $w$. 
Let $w = w_1iw_2iw_3$, where $w_1$, $w_2$, and $w_3$ are subwords of $w$. 
Since $w$ is $121$-avoiding, every letter in $w_2$ is less than $i$ or $w_2$ is empty. 
Thus, the first copy of $i$ can be omitted; that is, 
$w' = w_1w_2iw_3$ is a $12$-representant of $G$ and still $121$-avoiding. 
Performing the same procedure with all other letters 
occurring twice in $w'$ yields a permutation representing $G$. 
\end{proof}

Lemma~\ref{lem:121} together with Theorem~\ref{thm:permutation} indicates the following. 
\begin{theorem}\label{thm:121}
A graph is $12$-representable by a $121$-avoiding word if and only 
if it is a permutation graph. 
\end{theorem}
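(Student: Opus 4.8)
The plan is to chain together Lemma~\ref{lem:121} and Theorem~\ref{thm:permutation}, being careful about the passage between labeled and unlabeled graphs. First I would handle the forward direction. Suppose an unlabeled graph $G$ is $121$-representable; by definition this means $G$ admits a labeling that is $12$-representable by some $121$-avoiding word $w$. Applying Lemma~\ref{lem:121} to this labeled graph yields a permutation that $12$-represents the same labeled graph, and Theorem~\ref{thm:permutation} then immediately gives that $G$ is a permutation graph.

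For the converse, I would start from a permutation graph $G$. By Theorem~\ref{thm:permutation} there is a labeling of $G$ together with a permutation over $[n]$ that $12$-represents it. Since a permutation uses each letter exactly once, it contains no repeated letter and is therefore vacuously $121$-avoiding, as the pattern $121$ requires two equal outer letters. Hence this permutation is a $121$-avoiding $12$-representant of $G$, so $G$ is $121$-representable.

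The argument involves essentially no computation: both implications are short reductions to the two quoted results. The only point that needs a line of care is the observation that a permutation, having no repeated letters, automatically avoids $121$, which makes the ``permutation implies $121$-avoiding word'' half of the equivalence trivial, while the reverse containment (that a $121$-avoiding word can be thinned to a permutation) is exactly the content of Lemma~\ref{lem:121}. I expect no genuine obstacle; the statement is a corollary-style combination, and the main thing to keep straight is that Lemma~\ref{lem:121} is phrased for \emph{labeled} graphs whereas the theorem concerns the \emph{unlabeled} class, so the existential quantifier over labelings implicit in the definition of a $p$-representable graph must be threaded through both directions.
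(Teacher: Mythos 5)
Your proof is correct and follows exactly the paper's route: the paper derives Theorem~\ref{thm:121} by the same chaining of Lemma~\ref{lem:121} with Theorem~\ref{thm:permutation}, with the observation that a permutation is vacuously $121$-avoiding playing the role of the ``sufficiency is obvious'' step inside Lemma~\ref{lem:121}. Your explicit handling of the labeled/unlabeled distinction is a careful spelling-out of what the paper leaves implicit, not a different argument.
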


The forbidden patterns of permutation graphs (Theorem~\ref{thm:fp_pg}) also characterize $121$-representable graphs. 
To show this, we prove the following. 
\begin{theorem}\label{thm:permutation_labeled}
A labeled graph is $12$-representable by a permutation if and only if 
it does not contain any pattern 
in Figures~\ref{fig:fp_simple}\subref{fig:fp_comp} and~\ref{fig:fp_simple}\subref{fig:fp_cocomp} 
\end{theorem}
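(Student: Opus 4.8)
The plan is to unwind the definition: a labeled graph $G$ on $[n]$ is $12$-represented by a permutation $\pi$ exactly when, writing $\mathrm{pos}(v)$ for the position of the value $v$ in $\pi$, we have for all $i<j$ the equivalence $ij\in E(G)\iff \mathrm{pos}(j)<\mathrm{pos}(i)$ (this is literally the statement that $G$ is the permutation graph of $\pi$ under the given labeling, cf.\ Theorem~\ref{thm:permutation}). So the whole question reduces to deciding when the given label order admits a bijection $\mathrm{pos}\colon[n]\to[n]$ whose induced order opposes the label order precisely on the edges of $G$.

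For necessity I would argue directly from such a $\pi$. If three vertices $i<j<k$ formed the pattern in Figure~\ref{fig:fp_simple}\subref{fig:fp_comp} (edges $ij,jk$, non-edge $ik$), then $ij,jk\in E$ give $\mathrm{pos}(j)<\mathrm{pos}(i)$ and $\mathrm{pos}(k)<\mathrm{pos}(j)$, hence $\mathrm{pos}(k)<\mathrm{pos}(i)$, which forces $ik\in E$, a contradiction. Symmetrically, the pattern in Figure~\ref{fig:fp_simple}\subref{fig:fp_cocomp} (non-edges $ij,jk$, edge $ik$) yields $\mathrm{pos}(i)<\mathrm{pos}(j)<\mathrm{pos}(k)$, forcing $ik\notin E$, again a contradiction. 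Thus a permutation representant rules out both patterns.

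For sufficiency the key idea is to encode the required order on positions as a tournament and show the forbidden patterns force transitivity. On the vertex set $[n]$ I define an orientation $R$ by putting, for each pair $i<j$, the arc $j\to i$ when $ij\in E$ and the arc $i\to j$ when $ij\notin E$; exactly one arc is chosen per pair, so $R$ is a tournament. A suitable $\mathrm{pos}$ (equivalently $\pi$) exists precisely when $R$ is a transitive tournament, in which case $\mathrm{pos}$ is the rank of a vertex in $R$ and $\pi$ lists the vertices by increasing rank. Since a tournament is transitive if and only if it contains no directed triangle, it suffices to identify the two cyclic orientations on a triple $i<j<k$. A short case check shows the $3$-cycle $i\to j\to k\to i$ forces $ij\notin E$, $jk\notin E$, $ik\in E$, i.e.\ Figure~\ref{fig:fp_simple}\subref{fig:fp_cocomp}, while the reverse $3$-cycle given by $j\to i$, $k\to j$, $i\to k$ forces $ij\in E$, $jk\in E$, $ik\notin E$, i.e.\ Figure~\ref{fig:fp_simple}\subref{fig:fp_comp}. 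Hence avoiding both patterns is equivalent to $R$ having no directed triangle, so $R$ is transitive and the desired permutation exists.

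The main obstacle is the bookkeeping in the sufficiency direction: setting up the orientation $R$ so that its transitivity is \emph{exactly} the permutation-representability condition, and then verifying that the two inequivalent directed triangles on a labeled triple correspond bijectively to the two forbidden patterns. Once this correspondence is pinned down, the result follows from the standard fact that a tournament is transitive if and only if it is acyclic, i.e.\ has no $3$-cycle; everything else is routine.
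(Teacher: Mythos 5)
Your proof is correct and follows essentially the same route as the paper: the sufficiency uses exactly the paper's construction (orient each pair $i<j$ by $j\to i$ if $ij\in E$ and $i\to j$ otherwise, then read off the permutation from the resulting acyclic order), and your necessity check on triples is equivalent to the paper's argument via induced subgraphs. The only difference is that you spell out why the orientation is acyclic (the two directed triangles of the tournament correspond exactly to the two forbidden patterns, and a triangle-free tournament is transitive), a step the paper delegates to a citation of Golumbic.
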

\begin{proof}
To prove the necessity, 
by Proposition~\ref{prop:labeled induced subgraph}, 
it suffices to show that any pattern 
in Figures~\ref{fig:fp_simple}\subref{fig:fp_comp} and~\ref{fig:fp_simple}\subref{fig:fp_cocomp} 
is not $12$-representable by a permutation. 
Theorem~\ref{thm:fp} implies that 
the pattern in Figure~\ref{fig:fp_simple}\subref{fig:fp_comp} is not $12$-representable. 
Let $H$ be a graph with $V(H) = \{1, 2, 3\}$ such that $13 \in E(H)$ and $12, 23 \notin E(H)$. 
Suppose that there is a $12$-representant $w$ of $H$. 
Since $13 \in E(H)$, 
the rightmost $3$ occurs before the leftmost $1$. 
Since $12 \notin E(H)$, some $2$ occurs after the leftmost $1$. 
Since $23 \notin E(H)$, some $2$ occurs before the rightmost $3$. 
Thus, the pattern $2312$ must occur in $w$; that is, $w$ is not a permutation. 
Hence, the pattern in Figure~\ref{fig:fp_simple}\subref{fig:fp_cocomp} 
is not $12$-representable by a permutation. 
\par
To prove the sufficiency, 
we consider the orientation of the complete graph on $[n]$ 
defined so that for any two vertices $i$ and $j$ with $i < j$, 
we have $i \to j$ if $ij \notin E(G)$ and $j \to i$ otherwise. 
The orientation is acyclic (see, e.g.,~\cite[Chapter 7]{Golumbic04}), 
and hence, it determines a unique permutation $\pi$ on $[n]$. 
By definition, $\pi$ is a $12$-representant of $G$. 
\end{proof}

\begin{theorem}\label{cor:121_labeled}
A labeled graph is $12$-representable by a $121$-avoiding word if and only if 
it does not contain any pattern 
in Figures~\ref{fig:fp_simple}\subref{fig:fp_comp} and~\ref{fig:fp_simple}\subref{fig:fp_cocomp}. 
\end{theorem}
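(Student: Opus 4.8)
The statement says a labeled graph is $12$-representable by a $121$-avoiding word if and only if it contains no pattern from Figures~\ref{fig:fp_simple}\subref{fig:fp_comp} and~\ref{fig:fp_simple}\subref{fig:fp_cocomp}. The key observation is that almost all of the work has already been done: Lemma~\ref{lem:121} establishes that being $12$-representable by a $121$-avoiding word is equivalent to being $12$-representable by a permutation, and Theorem~\ref{thm:permutation_labeled} characterizes exactly when a labeled graph is $12$-representable by a permutation in terms of these very same two forbidden patterns. So the proof should simply chain these two results together.

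The plan is to argue as follows. First I would invoke Lemma~\ref{lem:121}: a labeled graph $G$ is $12$-representable by a $121$-avoiding word if and only if there is a permutation that $12$-represents $G$. Then I would invoke Theorem~\ref{thm:permutation_labeled}: a labeled graph is $12$-representable by a permutation if and only if it contains none of the patterns in Figures~\ref{fig:fp_simple}\subref{fig:fp_comp} and~\ref{fig:fp_simple}\subref{fig:fp_cocomp}. Composing these two equivalences immediately yields the desired statement. The proof is therefore a one- or two-sentence deduction.

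The only subtlety worth flagging is to make sure the two cited results genuinely compose at the \emph{labeled} level. Lemma~\ref{lem:121} is stated for a labeled graph $G$ and produces a permutation $12$-representing \emph{that same labeled} $G$ (the argument deletes the first copy of each repeated letter without relabeling, so the vertex ordering is preserved). Theorem~\ref{thm:permutation_labeled} is likewise a statement about labeled graphs and the forbidden patterns are ordered (induced) subgraphs, so no reordering or relabeling intervenes between the two applications. Since both equivalences concern the same labeled graph with the same underlying vertex ordering, the composition is clean and no extra verification of compatibility is needed.

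Accordingly, I do not anticipate any genuine obstacle here; the theorem is essentially a corollary obtained by transitivity of the two prior equivalences, which is presumably why it is placed immediately after Theorem~\ref{thm:permutation_labeled}. In the write-up I would state it crisply as: by Lemma~\ref{lem:121} the graph is $12$-representable by a $121$-avoiding word if and only if it is $12$-representable by a permutation, and by Theorem~\ref{thm:permutation_labeled} the latter holds if and only if the graph avoids the two patterns, completing the proof.
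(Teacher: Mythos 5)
Your proof is correct: Lemma~\ref{lem:121} and Theorem~\ref{thm:permutation_labeled} are both statements about the \emph{same} labeled graph (in particular, the duplicate-removal in the proof of Lemma~\ref{lem:121} never relabels vertices, unlike the $321$ case), so the two equivalences compose cleanly and yield the theorem by transitivity. The paper, however, takes a partly different route: it invokes Theorem~\ref{thm:permutation_labeled} only for the sufficiency direction (a permutation is trivially $121$-avoiding), and proves necessity \emph{directly}, without Lemma~\ref{lem:121}, by showing that each forbidden pattern fails to be $12$-representable by a $121$-avoiding word --- the pattern of Figure~\ref{fig:fp_simple}\subref{fig:fp_comp} is not $12$-representable at all by Theorem~\ref{thm:fp}, and any $12$-representant of the pattern of Figure~\ref{fig:fp_simple}\subref{fig:fp_cocomp} must contain the subword $2312$ and hence the pattern $121$; the hereditary property (Proposition~\ref{prop:labeled induced subgraph}) then finishes the argument. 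Your composition buys brevity and exhibits the theorem as an immediate corollary of two prior results, with the cost of depending on the word-surgery argument inside Lemma~\ref{lem:121}; the paper's direct argument is slightly longer but pinpoints concretely where an occurrence of $121$ is forced, which makes the role of the avoided pattern transparent. Both proofs are sound.
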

\begin{proof}
The sufficiency is obvious from Theorem~\ref{thm:permutation_labeled}, and we will prove the necessity. 
Theorem~\ref{thm:fp} implies that 
the pattern in Figure~\ref{fig:fp_simple}\subref{fig:fp_comp} is not $12$-representable. 
Let $H$ be a graph with $V(H) = \{1, 2, 3\}$ such that $13 \in E(H)$ and $12, 23 \notin E(H)$. 
Suppose that there is a $12$-representant $w$ of $H$. 
As shown in the proof of Theorem~\ref{thm:permutation_labeled}, 
the pattern $2312$ must occur in $w$. 
Thus, $w$ contains the pattern $121$. 
Therefore, the pattern in Figure~\ref{fig:fp_simple}\subref{fig:fp_cocomp} 
is not $12$-representable by a $121$-avoiding word. 
\end{proof}

\subsection{231-representable graphs}
\begin{lemma}\label{lem:231}
A labeled graph $G$ is $12$-representable by a $231$-avoiding word 
if and only if there is a $231$-avoiding permutation $12$-representing $G$. 
\end{lemma}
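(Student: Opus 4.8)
The sufficiency is immediate: a $231$-avoiding permutation is in particular a $231$-avoiding word, so if such a permutation $12$-represents $G$, then $G$ is $12$-representable by a $231$-avoiding word. For the necessity, the plan is to \emph{avoid} the local deletion argument used in Lemma~\ref{lem:121}. That strategy fails here: the word $2132$ is $231$-avoiding and $12$-represents the edgeless graph on $\{1,2,3\}$, yet deleting either copy of $2$ creates a spurious edge. Hence I would not try to reduce $w$ to a permutation letter by letter; instead I would argue globally, first producing \emph{some} representing permutation and then showing it is automatically $231$-avoiding.

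First I would show that $G$ is $12$-representable by a permutation. Since $p$-representability is hereditary (Proposition~\ref{prop:labeled induced subgraph}), it suffices to check that neither pattern in Figures~\ref{fig:fp_simple}\subref{fig:fp_comp} and~\ref{fig:fp_simple}\subref{fig:fp_cocomp} can occur in a $231$-representable graph. The pattern in Figure~\ref{fig:fp_simple}\subref{fig:fp_comp} is not even $12$-representable by Theorem~\ref{thm:fp}. For the pattern in Figure~\ref{fig:fp_simple}\subref{fig:fp_cocomp} on $\{1,2,3\}$ (edge $13$, non-edges $12, 23$), any $12$-representant has some $2$ before some $3$ (as $23 \notin E$), while every $3$ precedes every $1$ (as $13 \in E$); picking such a $2$, such a $3$, and a later $1$ exhibits the pattern $231$, so this configuration is not $231$-representable. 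Thus $G$ contains neither pattern, and Theorem~\ref{thm:permutation_labeled} supplies a permutation $\pi$ that $12$-represents $G$.

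The key step is to prove that $\pi$ avoids $231$. Suppose not: then there are values $v_3 < v_1 < v_2$ occurring in $\pi$ in the positional order $v_1, v_2, v_3$. Reading adjacencies from $\pi$ (for $u < v$, one has $uv \in E(G)$ iff $v$ precedes $u$), this gives $v_1 v_2 \notin E$, $v_1 v_3 \in E$, and $v_2 v_3 \in E$. Written in value order $v_3 < v_1 < v_2$, the induced ordered subgraph on $\{v_3, v_1, v_2\}$ is the pattern $F^{*}$ on $x < y < z$ with $xy, xz \in E$ and $yz \notin E$. By the hereditary property $F^{*}$ would then be $231$-representable; but exactly as above, any $12$-representant of $F^{*}$ must have a $2$ before a $3$ and a $1$ after every $3$, hence contains $231$, a contradiction. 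Therefore $\pi$ is $231$-avoiding and $12$-represents $G$, completing the necessity.

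The main obstacle is precisely the failure of the letter-by-letter deletion used for the $121$ case, which forces the structural route above. The crux is identifying the three-vertex pattern $F^{*}$ whose presence is equivalent to a $231$-occurrence in the representing permutation, and verifying that $F^{*}$, like the cocomparability pattern of Figure~\ref{fig:fp_simple}\subref{fig:fp_cocomp}, cannot be $12$-represented by any $231$-avoiding word.
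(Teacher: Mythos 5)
Your proof is correct, but it takes a genuinely different route from the paper's. The paper proves the necessity by a local surgery on the word itself: if a letter $i$ occurs twice in a $231$-avoiding representant $w = w_1 i w_2 i w_3$, then $231$-avoidance forces $w_2$ to split as $w_2'w_2''$ with every letter of $w_2'$ smaller than $i$ and every letter of $w_2''$ larger than $i$, and the two copies of $i$ can then be merged into a single copy placed between $w_2'$ and $w_2''$; iterating gives a $231$-avoiding permutation. Note that your stated motivation for abandoning this route is not quite accurate: on your example $w = 2132$ the paper's merge produces $123$, which still $12$-represents the edgeless graph and avoids $231$; what fails is only the verbatim Lemma~\ref{lem:121} move of deleting one copy in place, not local modification as such. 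Your alternative -- rule out the patterns of Figures~\ref{fig:fp_simple}\subref{fig:fp_comp} and~\ref{fig:fp_simple}\subref{fig:fp_cocomp} by direct word arguments, invoke Theorem~\ref{thm:permutation_labeled} to get a representing permutation $\pi$, and then show a $231$ occurrence in $\pi$ would create the ordered subgraph $F^{*}$ ($x<y<z$ with $xy, xz \in E$, $yz \notin E$), which no $231$-avoiding word can $12$-represent -- is sound and free of circularity, since Theorem~\ref{thm:permutation_labeled} and the hereditary property are established before this lemma. In effect your argument is the one the paper deploys later to prove Theorem~\ref{cor:231_labeled}, so you obtain the lemma and (essentially) the forbidden-pattern characterization in one stroke; what you give up is the self-contained, reusable surgery template that the paper applies again in the $121$, $321$, and $\{211,221\}$ cases.
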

\begin{proof}
The sufficiency is obvious, and we will prove the necessity. 
Let $w$ be a $231$-avoiding representant of $G$. 
Suppose that a letter $i$ occurs twice in $w$. 
Let $w = w_1iw_2iw_3$, where $w_1$, $w_2$, and $w_3$ are subwords of $w$. 
If $w_2$ contains two letters $j$ and $k$ with $j > i > k$ 
such that $j$ occurs before $k$ in $w_2$, 
then the first copy of $i$ together with $j$ and $k$ form the pattern $231$, 
a contradiction. 
Thus, $w_2$ can be partitioned into two parts $w_2 = w_2'w_2''$ such that 
every letter of $w_2'$ is less than $i$ and 
every letter of $w_2''$ is larger than $i$. 
Therefore, the word $w' = w_1w_2'iw_2''w_3$ is a $12$-representant of $G$. 
If $w$ is $231$-avoiding, then $w'$ is also $231$-avoiding. 
Performing the same procedure with all other letters 
occurring twice in $w'$ yields a $231$-avoiding permutation representing $G$. 
\end{proof}

A $231$-avoiding permutation is called a \emph{stack-sortable permutation}, 
and the permutation graph of a stack-sortable permutation is 
exactly a trivially perfect graph~\cite{Rotem81-DM}. 
Thus, the following is a direct consequence of Lemma~\ref{lem:231} and Theorem~\ref{thm:permutation}. 
\begin{theorem}\label{thm:231}
A graph is $12$-representable by a $231$-avoiding word if and only if 
it is a trivially perfect graph. 
\end{theorem}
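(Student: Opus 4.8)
The plan is to chain together Lemma~\ref{lem:231}, Theorem~\ref{thm:permutation}, and the cited result of Rotem~\cite{Rotem81-DM} stated just above, namely that the permutation graphs of $231$-avoiding (stack-sortable) permutations are exactly the trivially perfect graphs. Since the theorem concerns unlabeled graphs, the statement ``$G$ is $12$-representable by a $231$-avoiding word'' means that \emph{some} labeling of $G$ yields a labeled graph that is $12$-representable by a $231$-avoiding word, and I would keep this existential over labelings in mind throughout the argument.

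For the necessity, suppose some labeling of $G$ is $12$-representable by a $231$-avoiding word $w$. Lemma~\ref{lem:231} upgrades $w$ to a $231$-avoiding \emph{permutation} $\pi$ that $12$-represents the same labeled graph. Because $\pi$ is a permutation, the labeled graph it $12$-represents is precisely the permutation graph of $\pi$: each letter occurs once, so the condition ``every $j$ occurs before every $i$'' reduces to ``$j$ occurs before $i$,'' which is exactly the content of Theorem~\ref{thm:permutation}. Since $\pi$ is $231$-avoiding, i.e.\ stack-sortable, Rotem's result says this permutation graph is trivially perfect, and hence so is $G$.

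For the sufficiency, suppose $G$ is trivially perfect. By Rotem's result there is a $231$-avoiding permutation $\pi$ whose permutation graph is isomorphic to $G$; I would fix a labeling of $G$ realizing this isomorphism. Then $\pi$ $12$-represents $G$ under this labeling, again by the permutation case of the $12$-representation definition via Theorem~\ref{thm:permutation}, and $\pi$ is a $231$-avoiding word. Therefore $G$ is $12$-representable by a $231$-avoiding word.

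The argument is essentially a composition of results already in hand, so I do not expect a genuine technical obstacle. The only point requiring care is the bookkeeping of the labeled-versus-unlabeled passage, together with the observation that a permutation $12$-represents exactly its own permutation graph; this last fact is immediate from the definitions precisely because a permutation contains each letter exactly once, which collapses the ``every'' quantifiers in the $12$-adjacency condition to a single comparison.
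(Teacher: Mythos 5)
Your proposal is correct and follows essentially the same route as the paper, which derives the theorem as a direct consequence of Lemma~\ref{lem:231}, Theorem~\ref{thm:permutation}, and Rotem's result that permutation graphs of stack-sortable ($231$-avoiding) permutations are exactly the trivially perfect graphs. The extra bookkeeping you spell out (labeled versus unlabeled graphs, and the collapse of the ``every $j$ before every $i$'' condition when each letter occurs once) is implicit in the paper's one-line argument.
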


The forbidden patterns of trivially perfect graphs (Theorem~\ref{thm:fp_tpg}) also characterize $231$-representable graphs. 
\begin{theorem}\label{cor:231_labeled}
A labeled graph is $12$-representable by a $231$-avoiding word if and only if 
it does not contain any pattern 
in Figures~\ref{fig:fp_simple}\subref{fig:fp_m_int} and~\ref{fig:fp_simple}\subref{fig:fp_comp}. 
\end{theorem}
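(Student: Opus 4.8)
The plan is to prove both implications directly, in the spirit of the proof of Theorem~\ref{cor:121_labeled}, and in particular to exhibit an explicit $231$-avoiding permutation for the sufficiency rather than invoking Lemma~\ref{lem:231}. For the necessity (that neither pattern can occur), I would treat the two patterns separately. The pattern in Figure~\ref{fig:fp_simple}\subref{fig:fp_comp} coincides with the pattern in Figure~\ref{fig:fp}\subref{fig:fp1}, so by Theorem~\ref{thm:fp} it is not even $12$-representable, and hence not $231$-representable. For the pattern in Figure~\ref{fig:fp_simple}\subref{fig:fp_m_int}, I would take three vertices with labels $a < b < c$ such that $ac \in E(G)$ and $bc \notin E(G)$, the edge $ab$ being irrelevant. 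In any $12$-representant $w$, the adjacency $ac$ forces every $c$ to occur before every $a$, while the non-adjacency $bc$ forces some $b$ to occur before some $c$; that $c$ then precedes all copies of $a$, so $w_{\{a,b,c\}}$ contains a $b$, then a $c$, then an $a$. Since $a < b < c$, this is an occurrence of $231$, so $w$ is not $231$-avoiding.

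For the sufficiency I would first observe that the pattern in Figure~\ref{fig:fp_simple}\subref{fig:fp_cocomp} is the special case of the pattern in Figure~\ref{fig:fp_simple}\subref{fig:fp_m_int} in which the undrawn edge is absent. Hence any labeled graph avoiding the patterns in Figures~\ref{fig:fp_simple}\subref{fig:fp_m_int} and~\ref{fig:fp_simple}\subref{fig:fp_comp} also avoids those in Figures~\ref{fig:fp_simple}\subref{fig:fp_comp} and~\ref{fig:fp_simple}\subref{fig:fp_cocomp}. By Theorem~\ref{thm:permutation_labeled}, the canonical permutation $\pi$ arising from the acyclic orientation used in that proof is then a $12$-representant of $G$. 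It remains to verify that $\pi$ is $231$-avoiding. If $\pi$ contained a $231$ pattern on labels $a < b < c$, then $b$ would precede $c$, which would precede $a$ in $\pi$; reading this through the adjacency rule for permutations ($i$ and $j$ with $i<j$ adjacent iff $j$ precedes $i$) yields $ab, ac \in E(G)$ and $bc \notin E(G)$, an occurrence of the pattern in Figure~\ref{fig:fp_simple}\subref{fig:fp_m_int}, contradicting our hypothesis. Thus $\pi$ is a $231$-avoiding permutation representing $G$, so $G$ is $12$-representable by a $231$-avoiding word.

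The adjacency bookkeeping in both directions is routine. The one place to be careful, and the step I expect to be the main obstacle, is the final translation in the sufficiency: a $231$ occurrence in $\pi$ must be converted back into a graph pattern with the correct reading of ``precedes'' versus ``is adjacent,'' so that the three induced position/label inequalities line up exactly with the edge and non-edge conditions of Figure~\ref{fig:fp_simple}\subref{fig:fp_m_int}. This is not deep, but it is the point where a sign or direction error is easiest to make, so I would state the adjacency convention for permutations explicitly before carrying it out.
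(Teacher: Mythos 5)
Your proposal is correct and follows essentially the same route as the paper: for necessity, the fp\_comp pattern is dispatched by Theorem~\ref{thm:fp}, and the fp\_m\_int pattern forces the subsequence $b\,c\,a$ (a $231$ occurrence) in any $12$-representant; for sufficiency, the paper likewise invokes Theorem~\ref{thm:permutation_labeled} to get a permutation $\pi$ representing $G$ and converts any $231$ occurrence in $\pi$ into an occurrence of the fp\_m\_int pattern, a contradiction. Your explicit remark that fp\_cocomp is a special case of fp\_m\_int (so Theorem~\ref{thm:permutation_labeled} indeed applies) is a small clarification the paper leaves implicit, but the argument is the same.
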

\begin{proof}
To prove the necessity, 
by Proposition~\ref{prop:labeled induced subgraph}, 
it suffices to show that any pattern 
in Figures~\ref{fig:fp_simple}\subref{fig:fp_m_int} and~\ref{fig:fp_simple}\subref{fig:fp_comp} 
is not $12$-representable by a $231$-avoiding word. 
Theorem~\ref{thm:fp} implies that 
the pattern in Figure~\ref{fig:fp_simple}\subref{fig:fp_comp} is not $12$-representable. 
Let $H$ be a graph with $V(H) = \{1, 2, 3\}$ such that $13 \in E(H)$ and $23 \notin E(H)$. 
Suppose that there is a $12$-representant $w$ of $H$. 
Since $13 \in E(H)$, 
the rightmost $3$ occurs before the leftmost $1$. 
Since $23 \notin E(H)$, some $2$ occurs before the rightmost $3$. 
Thus, the pattern $231$ occurs in $w$. 
Hence, the pattern in Figure~\ref{fig:fp_simple}\subref{fig:fp_m_int} 
is not $12$-representable by a $231$-avoiding word. 
\par
To prove the sufficiency, 
we consider a labeled graph $G$ containing no pattern 
in Figures~\ref{fig:fp_simple}\subref{fig:fp_m_int} and~\ref{fig:fp_simple}\subref{fig:fp_comp}. 
Theorem~\ref{thm:permutation_labeled} indicates that there is a permutation $\pi$ on $[n]$ 
representing $G$. 
If $\pi$ contains a pattern $231$, then $G$ contains the pattern 
$H$ with $V(H) = \{1, 2, 3\}$ such that $12, 13 \in E(H)$ and $23 \notin E(H)$, 
a contradiction. 
Thus, $\pi$ is $231$-avoiding. 
\end{proof}

\subsection{321-representable graphs}
\begin{lemma}\label{lem:321:permutation}
A labeled graph $G$ is $12$-representable by a $321$-avoiding word 
if and only if there is a $321$-avoiding permutation $12$-representing $G$. 
\end{lemma}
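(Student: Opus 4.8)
The plan is to follow the same strategy as in the proof of Lemma~\ref{lem:231}. The sufficiency is immediate, since a permutation is in particular a word, so a $321$-avoiding permutation $12$-representing $G$ is already a $321$-avoiding $12$-representant of $G$. For the necessity I would start from an arbitrary $321$-avoiding representant $w$ of $G$ in which, by Proposition~\ref{prop:labeled induced subgraph}, every letter occurs at most twice, and eliminate the doubled letters one at a time—keeping both the represented graph and the $321$-avoidance intact—until only a permutation remains; the whole argument is then an induction on the number of doubled letters, concluding via Theorem~\ref{thm:permutation}.

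So take a letter $i$ occurring twice and write $w = w_1 i w_2 i w_3$, where $w_1$ and $w_3$ contain no copy of $i$. The first step is to record the structure that $321$-avoidance forces on $w_2$. If two letters $g_1 > g_2 > i$ occurred with $g_1$ before $g_2$ in $w_2$, then $g_1 g_2$ together with the \emph{second} copy of $i$ would form the pattern $321$; hence the letters of $w_2$ larger than $i$ appear in increasing order. Symmetrically, if two letters $i > s_1 > s_2$ occurred with $s_1$ before $s_2$ in $w_2$, then the \emph{first} copy of $i$ together with $s_1 s_2$ would form $321$; hence the letters of $w_2$ smaller than $i$ also appear in increasing order. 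Writing $w_2^{<}$ and $w_2^{>}$ for the letters of $w_2$ below and above $i$, the intended move is to replace the two copies of $i$ by a single copy at the boundary of the two groups, yielding the candidate $w' = w_1 w_2^{<} i w_2^{>} w_3$. Using the leftmost/rightmost-occurrence description of adjacency (an edge $ab$ with $a<b$ being present exactly when the last $b$ precedes the first $a$), this is precisely the placement needed so that every adjacency \emph{incident to $i$} is preserved. The $321$-avoidance of $w'$ is then easy to see: by the two observations above the inserted block $w_2^{<} i w_2^{>}$ is increasing and occupies the same interval of positions as $w_2$, so it contains no descent and no new descent crossing that interval is created, whence any would-be $321$ in $w'$ would already occur in $w$.

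The main obstacle is hidden in the claim that $w'$ represents the same graph. In the $231$-avoiding case the analogous reinsertion is essentially forced, because $231$-avoidance \emph{already} forbids a letter larger than $i$ from preceding a letter smaller than $i$ inside $w_2$; there $w_2 = w_2^{<} w_2^{>}$ holds without any reordering, so no adjacency among the letters of $w_2$ is disturbed. Under $321$-avoidance this separation need not hold: a large letter may legitimately precede a small letter in $w_2$, and pulling them apart can change adjacencies between letters of $w_2$ (an edge between such a pair can be present precisely because the large letter comes first). The delicate heart of the proof is therefore to show that these interleavings can always be resolved while preserving \emph{every} adjacency and the $321$-avoidance simultaneously—for instance by deleting an appropriately chosen single copy of $i$ instead of reinserting one, or by a more careful untangling of $w_2$—and it is exactly this point that makes the $321$ case genuinely harder than the $121$ and $231$ cases.
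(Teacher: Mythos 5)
Your sufficiency argument, the decomposition $w = w_1 i w_2 i w_3$, and the two monotonicity observations about $w_2$ are all correct, and in the two one-sided cases (every letter of $w_2$ below $i$, or every letter above $i$) your move $w' = w_1 w_2^{<} i w_2^{>} w_3$ coincides with the paper's simpler step of deleting one copy of $i$. The gap is the mixed case, where $w_2$ contains letters on both sides of $i$: you correctly diagnose that your reordering can destroy adjacencies between letters of $w_2$, but you leave its resolution open, and that case is exactly where the paper's proof does its real work, so the proposal is incomplete precisely at the decisive step.

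Moreover, the escape routes you suggest---deleting a well-chosen single copy of $i$, or untangling $w_2$ ``while preserving every adjacency''---cannot be carried out with the labeling held fixed, so no amount of extra care along those lines will close the gap. Consider $w = 2312$: it is $321$-avoiding and $12$-represents the labeled graph on $\{1,2,3\}$ whose only edge is $13$; but as shown in the proof of Theorem~\ref{thm:permutation_labeled}, this labeled graph (the pattern in Figure~\ref{fig:fp_simple}\subref{fig:fp_cocomp}) is not $12$-representable by \emph{any} permutation, let alone a $321$-avoiding one. The paper's missing idea is the following: in the mixed case, $321$-avoidance forces $w_1$ to contain no letter larger than $i$ (such a letter, followed by the first $i$ and then a letter of $w_2$ below $i$, would form $321$) and, symmetrically, $w_3$ to contain no letter smaller than $i$; since every letter of $[n]$ occurs in $w$, every letter above $i$ occurs after the first $i$ and every letter below $i$ occurs before the second $i$, so the vertex $i$ is \emph{isolated} in $G$. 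One then deletes both copies of $i$, relabels $i$ as $n+1$, and appends it at the end, obtaining $w'' = w_1 w_2 w_3 (n+1)$, which is still $321$-avoiding and represents the relabeled graph. This relabeling of the isolated vertex (harmless for Theorem~\ref{thm:321}, which concerns unlabeled graphs) is unavoidable, and it is exactly what your label-preserving plan rules out.
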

\begin{proof}
The sufficiency is obvious, and we will prove the necessity. 
Let $w$ be a 321-avoiding representant of $G$. 
Suppose that a letter $i$ occurs twice in $w$. 
Let $w = w_1iw_2iw_3$, where $w_1$, $w_2$, and $w_3$ are subwords of $w$. 
If $w_2$ is empty or every letter in $w_2$ is less than $i$, 
then the first copy of $i$ can be omitted; that is, 
$w' = w_1w_2iw_3$ is a $12$-representant of $G$ and still $321$-avoiding. 
Similarly, if every letter in $w_2$ is larger than $i$, 
then the second copy of $i$ can be omitted. 
Now, suppose that $w_2$ contains 
two letters $j$ and $k$ with $j > i > k$. 
No letters in $w_1$ are larger than $i$; 
otherwise, the pattern $321$ would exist. 
Recall that $w$ contains at least one copy of each letter in $[n]$. 
Thus, all letters larger than $i$ occur after the first copy of $i$. 
Similarly, no letters in $w_3$ are less than $i$, and hence, 
all letters less than $i$ occur before the second copy of $i$. 
Therefore, the vertex labeled with $i$ is isolated in $G$. 
We relabel $G$ so that label $i$ is replaced by $n+1$. 
The word $w'' = w_1w_2w_3(n+1)$ is a $12$-representant of the relabeled graph and still $321$-avoiding. 
Performing the same procedure with all letters 
occurring twice in $w$ yields a $321$-avoiding permutation representing $G$. 
\end{proof}

The following lemma together with Theorem~\ref{thm:cycle} indicates that 
a graph is bipartite if it is $12$-representable by a $321$-avoiding word. 
\begin{lemma}\label{lem:321:cycle}
Any cycle of length $3$ is not $12$-representable by a $321$-avoiding word. 
\end{lemma}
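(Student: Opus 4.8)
The plan is to show that a triangle (the cycle $C_3$) cannot be $12$-represented by any $321$-avoiding word. Let $G$ be the labeled triangle with vertex set $\{1,2,3\}$ and all three edges present, and suppose toward a contradiction that some $321$-avoiding word $w$ over $\{1,2,3\}$ $12$-represents $G$. By Proposition~\ref{prop:labeled induced subgraph} I may assume each letter occurs at most twice in $w$. The strategy is to extract, from the three adjacency conditions, enough ordering information about the occurrences of the letters to force a descending subsequence $321$, contradicting the hypothesis.

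First I would translate each edge into a statement about occurrences. Recall that $i$ and $j$ with $i<j$ are adjacent exactly when every $j$ precedes every $i$; in particular the rightmost $j$ occurs before the leftmost $i$. Applying this to the three edges $12$, $13$, and $23$ of the triangle gives: every $2$ precedes every $1$, every $3$ precedes every $1$, and every $3$ precedes every $2$. Combining these, every occurrence of $3$ precedes every occurrence of $2$, which in turn precedes every occurrence of $1$. Hence $w$, read left to right, lists all its $3$'s, then all its $2$'s, then all its $1$'s.

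The key step is then immediate: since $w$ must contain at least one copy of each of $1$, $2$, and $3$ (a $12$-representant contains every letter), picking one $3$, one $2$, and one $1$ in this forced left-to-right order yields the subsequence $3\,2\,1$, whose reduced form is $321$. Thus $w$ contains the pattern $321$, contradicting the assumption that $w$ is $321$-avoiding. Therefore no $321$-avoiding $12$-representant of the triangle exists.

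I expect no serious obstacle here; the only point requiring care is the correct direction of the adjacency condition (that an edge $ij$ with $i<j$ forces the larger label $j$ entirely to the \emph{left} of the smaller label $i$), since reversing it would instead produce an ascending rather than a descending subsequence. Once that is pinned down, the transitive chaining of the three conditions and the extraction of the forbidden pattern are routine. The result combines with Theorem~\ref{thm:cycle} to rule out all odd cycles, yielding bipartiteness of $321$-representable graphs as the surrounding text indicates.
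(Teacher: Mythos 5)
Your proof is correct and follows essentially the same route as the paper's: the paper likewise observes that completeness forces every $3$ before every $2$ before every $1$, so any $12$-representant contains the pattern $321$ (and notes, as you implicitly use, that the labeling of a complete graph is unique). Your write-up merely expands the paper's two-line argument into explicit steps, with no substantive difference.
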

\begin{proof}
Labeling of the cycle is unique because it is complete. 
Since each vertex is adjacent to the others, 
every $12$-representant contains the pattern $321$. 
\end{proof}

The lemmas above yield the following. 
\begin{theorem}\label{thm:321}
A graph is $12$-representable by a $321$-avoiding word if and only 
if it is a bipartite permutation graph. 
\end{theorem}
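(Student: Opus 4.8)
The plan is to reduce the statement, via Lemma~\ref{lem:321:permutation} and Theorem~\ref{thm:permutation}, to the assertion that a graph is the permutation graph of some $321$-avoiding permutation if and only if it is a bipartite permutation graph. Indeed, Lemma~\ref{lem:321:permutation} says that $G$ is $12$-representable by a $321$-avoiding word exactly when $G$ is $12$-represented by a $321$-avoiding permutation $\pi$, and by the definition of permutation graphs together with Theorem~\ref{thm:permutation} such a $\pi$ realizes $G$ as the permutation graph of $\pi$. So everything comes down to understanding which permutation graphs arise from $321$-avoiding permutations.

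The key observation I would isolate first is the translation between triangles in $G$ and the pattern $321$ in the representing permutation. If $\pi$ is a permutation $12$-representing a labeled graph, then for values $a < b < c$ the three pairs $ab$, $ac$, $bc$ are all edges precisely when $c$, $b$, $a$ occur in this left-to-right order in $\pi$, i.e.\ when $\pi$ restricted to $\{a,b,c\}$ is a descending subsequence. Hence $\pi$ contains the pattern $321$ if and only if the vertices $a \prec b \prec c$ induce a triangle, which is exactly the pattern in Figure~\ref{fig:fp_simple}\subref{fig:fp_triangle}. Consequently $\pi$ is $321$-avoiding if and only if its permutation graph is triangle-free. I would then note that a triangle-free permutation graph is bipartite: permutation graphs are perfect, so $\omega = 2$ forces $\chi = 2$; alternatively this also follows from the cycle lemmas already in place. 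Thus, within permutation graphs, $321$-avoidability of the representant, triangle-freeness, and bipartiteness all coincide.

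With this in hand the forward direction is immediate: a graph $12$-representable by a $321$-avoiding word is a permutation graph by Lemma~\ref{lem:321:permutation} and Theorem~\ref{thm:permutation}, and it is bipartite by Lemma~\ref{lem:321:cycle} together with Theorem~\ref{thm:cycle} (the shortest odd cycle of any non-bipartite graph is induced, and by heredity both $C_3$ and every longer odd cycle are excluded), so it is a bipartite permutation graph. For the converse, given a bipartite permutation graph $G$ I would invoke Theorem~\ref{thm:permutation} to fix a labeling for which $G$ is $12$-represented by a permutation $\pi$; since $G$ is bipartite it is triangle-free, so by the observation $\pi$ cannot contain $321$, and therefore $G$ is $12$-representable by a $321$-avoiding word.

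The step I expect to require the most care is the triangle--$321$ correspondence and, with it, the labeled-versus-unlabeled bookkeeping: I must verify the ``only if'' half carefully, namely that a $321$ occurrence in the representant would force a triangle in the \emph{fixed} labeled graph, so that triangle-freeness directly yields $321$-avoidance for whatever representant the chosen labeling produces, with no re-labeling needed. The remaining ingredient, that a triangle-free permutation graph is bipartite, is routine once perfection (or the already-established exclusion of odd cycles) is invoked.
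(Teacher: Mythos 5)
Your proposal is correct and takes essentially the same route as the paper: Lemma~\ref{lem:321:permutation} plus Theorem~\ref{thm:permutation} give the permutation-graph part, Lemma~\ref{lem:321:cycle} plus Theorem~\ref{thm:cycle} (with heredity via the shortest induced odd cycle) give bipartiteness, and the converse rests on the same observation that a $321$ occurrence in a permutation $12$-representant forces a triangle in the fixed labeled graph. The only cosmetic difference is that you obtain the permutation representant of a bipartite permutation graph by invoking Theorem~\ref{thm:permutation} abstractly, whereas the paper takes the defining permutation of the permutation graph directly, which is automatically a $12$-representant under the natural labeling.
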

\begin{proof}
Let $G$ be a $321$-representable graph. 
Lemma~\ref{lem:321:permutation} and Theorem~\ref{thm:permutation} indicate 
that $G$ is a permutation graph. 
Lemma~\ref{lem:321:cycle} and Theorem~\ref{thm:cycle} indicate 
that $G$ is bipartite. 
Conversely, let $G$ be the permutation graph of a permutation $\pi$ on $[n]$ that is bipartite. 
Two vertices $i$ and $j$ with $i < j$ are adjacent in $G$ 
if and only if $j$ occurs before $i$ in $\pi$. 
Thus, $\pi$ is a $12$-representant of $G$. 
If $\pi$ contains the pattern $321$, 
then $G$ contains a cycle of length $3$, a contradiction. 
Thus, $\pi$ is $321$-avoiding. 
\end{proof}

\begin{remark}
The pattern in Figure~\ref{fig:fp_simple}\subref{fig:fp_cocomp} 
is $12$-representable by a $321$-avoiding word $w = 2312$. 
Thus, the class of $321$-representable graphs cannot be characterized by 
the forbidden patterns of bipartite permutation graphs (Theorem~\ref{thm:fp_bpg}). 
\end{remark}

\section{$123$-representable graphs}\label{sec:123}
In this section, we show a forbidden pattern characterization 
and the related results for $123$-representable graphs. 
\begin{lemma}\label{lem:123-1}
If a labeled graph contains a pattern in Figure~\ref{fig:fp123}, 
then it is not $12$-representable by a $123$-avoiding word. 
\end{lemma}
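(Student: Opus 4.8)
The plan is to show that if a labeled graph contains one of the patterns in Figure~\ref{fig:fp123}, then no $12$-representant of that pattern can avoid the pattern $123$. By Proposition~\ref{prop:labeled induced subgraph} (the hereditary property) together with the fact that $123$-containment is preserved under taking the relevant subwords, it suffices to argue directly at the level of each forbidden pattern $H$ in the figure: I would assume for contradiction that $H$ admits a $123$-avoiding $12$-representant $w$, and then derive that $w$ must in fact contain $123$.

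First I would set up the translation between adjacency and word order that is used repeatedly in the preceding proofs (e.g.\ in Theorems~\ref{thm:permutation_labeled} and~\ref{cor:231_labeled}). For vertices $i < j$: if $ij \in E(H)$ then every copy of $j$ precedes every copy of $i$ in $w$; if $ij \notin E(H)$ then some copy of $i$ precedes some copy of $j$. By Proposition~\ref{prop:labeled induced subgraph} I may also assume each letter occurs at most twice, which keeps the case analysis finite. For each candidate pattern I would read off these order constraints on the (at most two) occurrences of each letter and then exhibit three positions whose letters are in strictly increasing order, i.e.\ an occurrence of the pattern $123$. This is exactly the style of the $2312$-argument in the proof of Theorem~\ref{thm:permutation_labeled}, adapted to force an ascending triple rather than a $121$.

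The key steps, carried out for each forbidden pattern separately, are: (1)~record which pairs are edges and which are non-edges among the three or four vertices of the pattern; (2)~convert each edge into a ``later letter entirely before earlier letter'' constraint and each non-edge into an ``earlier occurrence of the smaller label before some copy of the larger label'' constraint; (3)~combine these constraints to locate a rising subsequence of length three. For a non-edge $ab$ with $a<b$ I get a copy of $a$ before a copy of $b$ (an ascending pair $a\,b$); I then use a third vertex $c$, related to $a$ and $b$ by the pattern's remaining edges/non-edges, to extend this to an ascending triple. Since such a triple reduces to $123$, this contradicts the assumed $123$-avoidance.

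The main obstacle I anticipate is bookkeeping rather than conceptual difficulty: because letters may appear twice, I must be careful about \emph{which} occurrence (leftmost or rightmost) each adjacency constraint pins down, and ensure the three positions I select are genuinely at distinct, increasing positions with increasing labels. In particular, when a vertex is incident to both an edge and a non-edge, the edge forces all its copies to one side while the non-edge forces a copy to the other side, and I must check these are compatible only via an ascending triple. I would organize the argument pattern-by-pattern (matching the subfigures of Figure~\ref{fig:fp123}), and for each one explicitly name the three letters forming the forbidden ascent, so that the reader can verify the $123$-occurrence directly. I expect each individual case to collapse to a short chain of the form ``$a$ before $b$, and $b$ before $c$, with $a<b<c$,'' mirroring the concise arguments already given for the $121$- and $231$-cases.
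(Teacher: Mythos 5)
Your proposal is correct and takes essentially the same route as the paper: reduce via the hereditary property (Proposition~\ref{prop:labeled induced subgraph}) to the individual patterns, translate each edge into an ``every copy of the larger label precedes every copy of the smaller'' constraint and each non-edge into a ``some smaller copy precedes some larger copy'' constraint, and combine these to force an ascending triple --- the paper does exactly this, forcing the subwords $1324$ and $2314$ for the four-vertex patterns. The only nuance is pattern~(a) of Figure~\ref{fig:fp123}, where the constraints are outright contradictory (no $12$-representant exists at all, which the paper handles by citing Theorem~\ref{thm:fp}) rather than forcing a $123$ occurrence, but your constraint-combination framework yields that contradiction immediately as well.
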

\begin{proof}
By Proposition~\ref{prop:labeled induced subgraph}, 
it suffices to show that any pattern in Figure~\ref{fig:fp123} 
is not $12$-representable by a $123$-avoiding word. 
Theorem~\ref{thm:fp} implies that 
the pattern in Figure~\ref{fig:fp123}\subref{fig:fp123-1} is not $12$-representable. 
\par
Let $H_1$ be a graph with 
$V(H_1) = \{1, 2, 3, 4\}$ such that $23 \in E(H_1)$ and $13, 24 \notin E(H_1)$. 
Suppose that there is a $12$-representant $w_1$ of $H_1$. 
Since $23 \in E(H_1)$, 
the rightmost $3$ occurs before the leftmost $2$ in $w_1$. 
Since $13 \notin E(H_1)$, some $1$ occurs before the rightmost $3$. 
Since $24 \notin E(H_1)$, some $4$ occurs after the leftmost $2$. 
Thus, the pattern $1324$ must occur in $w_1$. 
Hence, $w_1$ contains the pattern $123$. 
Therefore, the pattern in Figure~\ref{fig:fp123}\subref{fig:fp123-2} 
is not $12$-representable by a $123$-avoiding word. 
\par
Let $H_2$ be a graph with 
$V(H_2) = \{1, 2, 3, 4\}$ such that $13 \in E(H_2)$ and $14, 23 \notin E(H_2)$. 
Suppose that there is a $12$-representant $w_2$ of $H_2$. 
Since $13 \in E(H_2)$, 
the rightmost $3$ occurs before the leftmost $1$ in $w_2$. 
Since $14 \notin E(H_2)$, some $4$ occurs after the leftmost $1$. 
Since $23 \notin E(H_2)$, some $2$ occurs before the rightmost $3$. 
Thus, the pattern $2314$ must occur in $w_2$. 
Hence, $w_2$ contains the pattern $123$. 
Therefore, the pattern in Figure~\ref{fig:fp123}\subref{fig:fp123-3} 
is not $12$-representable by a $123$-avoiding word. 
A similar argument would show the case for 
the pattern in Figure~\ref{fig:fp123}\subref{fig:fp123-4}. 
\end{proof}

\begin{figure}[ht]
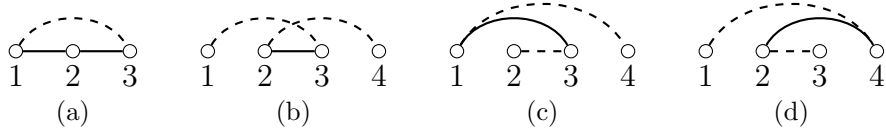

  \centering
  \subcaptionbox{\label{fig:fp123-1}}{\begin{tikzpicture}
\input{./fig/pattern/template_fp3}
\draw [thick] (x) -- (y);
\draw [thick] (y) -- (z);
\draw [thick,dashed] (x) to [out=60, in=120] (z);
\end{tikzpicture}}
  \subcaptionbox{\label{fig:fp123-2}}{\begin{tikzpicture}
\input{./fig/pattern/template_fp4}
\draw [thick,dashed] (x) to [out=60, in=120] (z);
\draw [thick,dashed] (y) to [out=60, in=120] (w);
\draw [thick] (y) to (z);
\end{tikzpicture}}
  \subcaptionbox{\label{fig:fp123-3}}{\begin{tikzpicture}
\input{./fig/pattern/template_fp4}
\draw [thick,dashed] (x) to [out=60, in=120] (w);
\draw [thick] (x) to [out=60, in=120] (z);
\draw [thick,dashed] (y) to (z);
\end{tikzpicture}}
  \subcaptionbox{\label{fig:fp123-4}}{\begin{tikzpicture}
\input{./fig/pattern/template_fp4}
\draw [thick,dashed] (x) to [out=60, in=120] (w);
\draw [thick] (y) to [out=60, in=120] (w);
\draw [thick,dashed] (y) to (z);
\end{tikzpicture}}
  \caption{Forbidden patterns of $123$-representable graphs. }
  \label{fig:fp123}
\end{figure}

We show that the forbidden patterns in Figure~\ref{fig:fp123} 
characterize $123$-representable graphs. 
To this end, we consider the complements of the patterns in Figure~\ref{fig:fp123}; 
see Figure~\ref{fig:cfp123}. 
We first deal with graphs characterized 
by the forbidden pattern in Figure~\ref{fig:cfp123}\subref{fig:cfp123-2}. 
Such graphs are called 
\emph{max point-tolerance (MPT) graphs}~\cite{CCFHHHS17-DAM,Rusu23-TCS}, 
which are also known as 
\emph{hook graphs}~\cite{Hixon13-Master} and 
\emph{$p$-Box(1) graphs}~\cite{SC15-DMTCS}. 

\begin{figure}[ht]
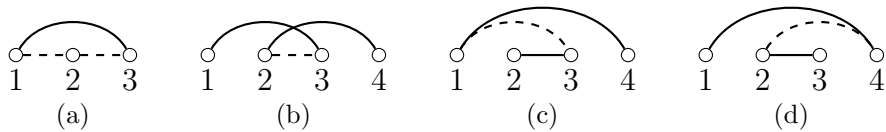

  \centering
  \subcaptionbox{\label{fig:cfp123-1}}{\begin{tikzpicture}
\input{./fig/pattern/template_fp3}
\draw [thick,dashed] (x) -- (y);
\draw [thick,dashed] (y) -- (z);
\draw [thick] (x) to [out=60, in=120] (z);
\end{tikzpicture}}
  \subcaptionbox{\label{fig:cfp123-2}}{\begin{tikzpicture}
\input{./fig/pattern/template_fp4}
\draw [thick] (x) to [out=60, in=120] (z);
\draw [thick] (y) to [out=60, in=120] (w);
\draw [thick,dashed] (y) to (z);
\end{tikzpicture}}
  \subcaptionbox{\label{fig:cfp123-3}}{\begin{tikzpicture}
\input{./fig/pattern/template_fp4}
\draw [thick] (x) to [out=60, in=120] (w);
\draw [thick,dashed] (x) to [out=60, in=120] (z);
\draw [thick] (y) to (z);
\end{tikzpicture}}
  \subcaptionbox{\label{fig:cfp123-4}}{\begin{tikzpicture}
\input{./fig/pattern/template_fp4}
\draw [thick] (x) to [out=60, in=120] (w);
\draw [thick,dashed] (y) to [out=60, in=120] (w);
\draw [thick] (y) to (z);
\end{tikzpicture}}
  \caption{Complements of the forbidden patterns in Figure~\ref{fig:fp123}. }
  \label{fig:cfp123}
\end{figure}

A graph $G$ is an \emph{MPT graph} if 
each vertex $v$ can be assigned to an interval $I_v$ on the real line 
together with a point $p_v \in I_v$ 
such that $uv \in E(G) \iff \{p_u, p_v\} \subseteq I_u \cap I_v$ 
for any two vertices $u, v \in V(G)$. 
We call the set $\{(I_v, p_v) \colon\ v \in V(G)\}$ an \emph{MPT model} of $G$. 
The forbidden pattern characterization is known for MPT graphs. 
\begin{theorem}[\cite{CCFHHHS17-DAM,Hixon13-Master,SC15-DMTCS}]\label{thm:fp_MPT}
A graph is an MPT graph if and only if 
it has a vertex ordering 
that does not contain the pattern in Figure~\ref{fig:cfp123}\subref{fig:cfp123-2}. 
\end{theorem}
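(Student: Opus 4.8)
The plan is to read the forbidden pattern in Figure~\ref{fig:cfp123}\subref{fig:cfp123-2} as the following statement about a vertex ordering: there is no quadruple $a \prec b \prec c \prec d$ with $ac, bd \in E$ and $bc \notin E$; equivalently, the ordering avoids the pattern precisely when, for every such quadruple, $ac \in E$ and $bd \in E$ force $bc \in E$. I would then prove the two implications by passing between this ordering and the points of an MPT model. A preliminary and repeatedly useful observation is that, since $p_u \in I_u$ and $p_v \in I_v$ always hold, the defining rule collapses to $uv \in E \iff p_u \in I_v \wedge p_v \in I_u$.

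For the necessity, I would begin with an MPT model $\{(I_v, p_v) : v \in V(G)\}$ and put it in general position: shifting any single point $p_v$ by a small amount that crosses no interval endpoint changes neither the memberships $p_v \in I_u$ nor the (independent) memberships $p_u \in I_v$, so all adjacencies are preserved and I may assume the points are pairwise distinct. Ordering the vertices by increasing $p_v$, suppose this ordering contained the pattern on $a \prec b \prec c \prec d$. From $ac \in E$ I extract $l_c \le p_a$, and from $bd \in E$ I extract $p_d \le r_b$; together with the automatic bounds $p_c \le r_c$ and $l_b \le p_b$, the chains $l_c \le p_a < p_b < p_c \le r_c$ and $l_b \le p_b < p_c < p_d \le r_b$ give $p_b \in I_c$ and $p_c \in I_b$, whence $bc \in E$, contradicting $bc \notin E$. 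Thus the point-ordering avoids the pattern.

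For the sufficiency, I would fix a pattern-avoiding ordering $v_1 \prec \cdots \prec v_n$ and build a model with points at the ordinal positions $p_{v_i} = i$ and intervals read off from the neighborhoods, namely $r_i = \max(\{i\} \cup \{k > i : v_iv_k \in E\})$ and $l_i = \min(\{i\} \cup \{h < i : v_hv_i \in E\})$, with $I_{v_i} = [l_i, r_i]$; note $l_i \le i \le r_i$, so $p_{v_i} \in I_{v_i}$. Because the points are increasing integers, for $i < j$ the memberships $i \le r_j$ and $l_i \le j$ are automatic, so the adjacency rule reduces to $v_iv_j \in E \iff r_i \ge j \wedge l_j \le i$. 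The forward direction is immediate from the definitions. For the backward direction, $r_i \ge j > i$ supplies a neighbor $v_k$ of $v_i$ with $k = r_i \ge j$, and $l_j \le i < j$ supplies a neighbor $v_h$ of $v_j$ with $h = l_j \le i$; if $h = i$ or $k = j$ then $v_iv_j \in E$ outright, and otherwise $h \prec i \prec j \prec k$ carries the edges $v_hv_j, v_iv_k \in E$, so pattern-avoidance forces $v_iv_j \in E$, as required.

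Both directions rest on the same squeezing idea, so the theorem is not hard once the right model is written down; the step I expect to demand the most care is the backward implication in the sufficiency. There one must verify that the endpoints $l_j$ and $r_i$ are chosen so that a single application of the forbidden-pattern hypothesis to the quadruple $(v_h, v_i, v_j, v_k)$ handles every configuration, and in particular that the degenerate cases $h = i$ and $k = j$ are absorbed correctly and that the two ``automatic'' memberships genuinely hold for all pairs $i < j$.
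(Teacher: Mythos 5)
Your proof is correct. Note first that the paper does not actually prove this theorem---it is imported from the cited references---so there is no in-paper proof to match line by line; the closest thing is the MPT-model construction the paper reproduces inside the proof of Lemma~\ref{lem:123-2}, and your sufficiency argument is essentially that construction: points $p_i=i$, right endpoint at the largest later neighbor, left endpoint at the smallest earlier neighbor (the paper adds tiny fractional offsets so points and endpoints never coincide, which it needs later for its unit-length adjustment), followed by the same quadruple contradiction---a non-adjacent pair $i<j$ with $p_i,p_j\in I_i\cap I_j$ yields a neighbor $h<i$ of $j$ and a neighbor $k>j$ of $i$, and $(h,i,j,k)$ is exactly the forbidden pattern. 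Your handling of the degenerate cases $h=i$, $k=j$ and of the two automatic memberships for $i<j$ is also right. The necessity direction (ordering the vertices by their points) is the standard converse and is sound.

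One blemish, in a step you do not need: the general-position argument. If some $p_v$ sits exactly at the right endpoint of one interval containing it and at the left endpoint of another, then every nonzero shift of $p_v$ destroys one of those two memberships, so ``a small shift crossing no endpoint preserves all memberships'' fails as stated (it can be repaired by first dilating every interval by an amount smaller than the least positive point-to-endpoint distance). But distinctness of the points is never used: order by $p_v$ with ties broken arbitrarily, and your two chains of inequalities go through verbatim with $\le$ in place of $<$, since they only use $p_a\le p_b\le p_c\le p_d$, the automatic memberships $p_v\in I_v$, and the two facts $l_c\le p_a$ and $p_d\le r_b$ extracted from $ac,bd\in E$. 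Deleting the perturbation paragraph makes the proof both shorter and airtight.
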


MPT graphs can be characterized as intersection graphs of axis-aligned $L$-shapes. 
An \emph{$L$-shape} is a union of a vertical line segment and a horizontal line segment in which 
the bottom endpoint of the vertical line segment coincides with the left endpoint of the horizontal one. 
The common endpoint is called the \emph{corner} of the $L$-shape. 
Consider a line $\mathcal{L}: y = -x$ in the $xy$-plane. 
A \emph{hook} is an $L$-shape whose corner lies on $\mathcal{L}$. 
A graph is an MPT graph if and only if it is the intersection graph of hooks~\cite{CCFHHHS17-DAM,SC15-DMTCS}. 
We call the collection of hooks a \emph{hook model} of the graph. 
\par
The transformation between MPT models and hook models can be described as follows. 
Let $\{([\ell_v, r_v], p_v) \colon\ v \in V(G)\}$ be an MPT model of a graph $G$, 
where $\ell_v$ and $r_v$ denote the left and right endpoints of the interval $I_v$, respectively. 
For each vertex $v$, we consider a hook where 
the corner is $(p_v, -p_v)$, 
the top endpoint is $(p_v, -\ell_v)$, and 
the right endpoint is $(r_v, -p_v)$; 
see Figure~\ref{fig:hook model}. 
The collection of such hooks forms a hook model of $G$~\cite{CCFHHHS17-DAM,SC15-DMTCS}. 

\begin{figure}[ht]
  \centering\begin{tikzpicture}
\def\len{0.5}
\useasboundingbox (-3.5*\len-0.5, -5*\len-0.1) rectangle (5*\len+0.5, 4.5*\len+0.3);
\tikzstyle{every node}=[inner sep=1pt]
\node [] (c1) at (1*\len, -1*\len) {};
\node [] (c2) at (2*\len, -2*\len) {};
\node [] (c3) at (3*\len, -3*\len) {};
\node [] (t1) at ($(c1) + (0, 3*\len)$) {};
\node [] (t3) at ($(c3) + (0, 3*\len)$) {};
\node [] (r1) at ($(c1) + (3*\len, 0)$) {};
\node [] (r2) at ($(c2) + (3*\len, 0)$) {};
\node [label=above left:$\mathcal{L}$] (L) at (-0.5*\len, 0.5*\len) {};
\draw [ultra thick] (L) -- ($(L) + (5*\len, -5*\len)$);
\draw [] 
	(t1) -- (c1) -- (r1)
	(c2) -- (r2)
	(t3) -- (c3)
;
\tikzstyle{every node}=[draw,rectangle,fill=black,minimum size=5pt,inner sep=2pt]
\node [label=above:$p_1$] (ap1) at (1*\len, 4.5*\len) {};
\node [label=above:$p_2$] (ap2) at (2*\len, 3.5*\len) {};
\node [label=above:$p_3$] (ap3) at (3*\len, 2.5*\len) {};
\draw [{|-|}] ($(ap1) - (3*\len, 0)$) -- ($(ap1) + (3*\len, 0)$);
\draw [{-|}] (ap2) -- ($(ap2) + (3*\len, 0)$);
\draw [{-|}] (ap3) -- ($(ap3) - (3*\len, 0)$);
\tikzstyle{every node}=[inner sep=2pt]
\node [label=above:$\ell_1$] (al1) at ($(ap1) - (3*\len, 0)$) {};
\node [label=above:$r_1$]    (ar1) at ($(ap1) + (3*\len, 0)$) {};
\node [label=above:$r_2$]    (ar2) at ($(ap2) + (3*\len, 0)$) {};
\node [label=above:$\ell_3$] (al3) at ($(ap3) - (3*\len, 0)$) {};
\draw [dotted] 
	(ap1) -- (t1)
	(ap2) -- (c2)
	(ap3) -- (t3)
	(ar1) -- (r1)
	(ar2) -- (r2)
;
\tikzstyle{every node}=[draw,rectangle,fill=black,minimum size=5pt,inner sep=1pt]
\node [label=left :$p_1$] (lp1) at (-3.5*\len, -1*\len) {};
\node [label=left :$p_2$] (lp2) at (-2.5*\len, -2*\len) {};
\node [label=left :$p_3$] (lp3) at (-1.5*\len, -3*\len) {};
\draw [{|-|}] ($(lp1) - (0, 3*\len)$) -- ($(lp1) + (0, 3*\len)$);
\draw [{-|}] (lp2) -- ($(lp2) - (0, 3*\len)$);
\draw [{-|}] (lp3) -- ($(lp3) + (0, 3*\len)$);
\tikzstyle{every node}=[inner sep=2pt]
\node [label=left :$\ell_1$] (ll1) at ($(lp1) + (0, 3*\len)$) {};
\node [label=left :$r_1$]    (lr1) at ($(lp1) - (0, 3*\len)$) {};
\node [label=left :$r_2$]    (lr2) at ($(lp2) - (0, 3*\len)$) {};
\node [label=left :$\ell_3$] (ll3) at ($(lp3) + (0, 3*\len)$) {};
\draw [dotted] 
	(lp1) -- (c1)
	(lp2) -- (c2)
	(lp3) -- (c3)
	(ll1) -- (t1)
	(ll3) -- (t3)
;
\end{tikzpicture}
  \caption{Illustration of the transformation between an MPT model and a hook model. }
  \label{fig:hook model}
\end{figure}
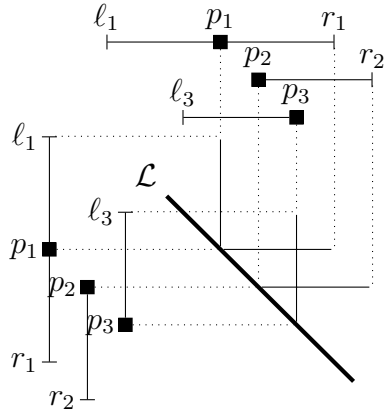

A hook is called a \emph{stick}~\cite{LHKLM19-TCS} 
if it degenerates to a horizontal or vertical line segment. 
In other words, a stick is a horizontal or vertical line segment 
whose left or bottom endpoint lies on the line $\mathcal{L}$. 
By abuse of notion, we refer to the endpoint on $\mathcal{L}$ as the \emph{corner} of the stick. 

We say that a hook is of \emph{unit length} if 
both horizontal and vertical line segments have unit length. 
We consider the intersection graphs of unit-length hooks and sticks, which in turn 
are precisely complements of $123$-representable graphs. 
Note that the right and top endpoints of the hooks and sticks lie on the line $\mathcal{L'}: y = -x + 1$. 
This indicates that 
the intersection graphs of unit-length hooks and sticks are simple-triangle graphs; 
see Figure~\ref{fig:ex 123}\subref{fig:model cricket}. 

\begin{figure}[h]
  \begin{tabular}{ccc}
    \begin{minipage}[]{0.6\hsize}
      \centering
      \subcaptionbox{\label{fig:co-cricket}}{\begin{tikzpicture}
\def\len{1.0}
\useasboundingbox (-0.7, -0.3) rectangle (2.0*\len+0.7, 1*\len+0.3);
\tikzstyle{every node}=[draw,circle,fill=white,minimum size=5pt,inner sep=0pt]
\node [label=above:$1$] (v1) at (0, 1*\len) {};
\node [label=above:$3$] (v3) at (1*\len, 1*\len) {};
\node [label=below:$2$] (v2) at (1*\len, 0) {};
\node [label=below:$4$]       (v4) at (0, 0) {};
\node [label=above:$5$]       (v5) at (2*\len, 0.5*\len) {};
\draw [] (v3) -- (v1) -- (v4) -- (v3) -- (v2) -- (v4);
\end{tikzpicture}}
      \subcaptionbox{\label{fig:cricket}}{\begin{tikzpicture}
\def\len{1.0}
\useasboundingbox (-0.7, -0.3) rectangle (2.0*\len+0.7, 1*\len+0.3);
\tikzstyle{every node}=[draw,circle,fill=white,minimum size=5pt,inner sep=0pt]
\node [label=below:$3$] (a) at (0, 0) {};
\node [label=below:$5$] (b) at (1*\len, 0) {};
\node [label=below:$4$] (c) at (2*\len, 0) {};
\node [label=above:$1$] (d) at ($(b) + (120:\len)$) {};
\node [label=above:$2$] (e) at ($(b) + ( 60:\len)$) {};
\draw [] (a) -- (b) -- (d) -- (e) -- (b) -- (c);
\end{tikzpicture}}
\\
      \subcaptionbox{\label{fig:construction}}{\begin{tikzpicture}
\def\len{1.2}
\def\lenh{0.5}
\useasboundingbox (-1*\len, -6*\lenh-0.2) rectangle (5*\len, 0.8);
\tikzstyle{every node}=[draw,circle,fill=white,minimum size=5pt,inner sep=0pt]
\node [label=below:$1$] (v1) at (0*\len, 0) {};
\node [label=below:$2$] (v2) at (1*\len, 0) {};
\node [label=below:$3$] (v3) at (2*\len, 0) {};
\node [label=below:$4$] (v4) at (3*\len, 0) {};
\node [label=below:$5$] (v5) at (4*\len, 0) {};
\draw [] (v1) to (v2);
\draw [] (v1) to [out=30, in=150] (v5);
\draw [] (v2) to [out=30, in=150] (v5);
\draw [] (v3) to [out=30, in=150] (v5);
\draw [] (v4) to (v5);
\tikzstyle{every node}=[draw,rectangle,fill=black,minimum size=5pt,inner sep=2pt]
\node [label=above:$p_1$] (p1) at (0*\len, -2*\lenh) {};
\node [label=above:$p_2$] (p2) at (1*\len, -3*\lenh) {};
\node [label=above:$p_3$] (p3) at (2*\len, -4*\lenh) {};
\node [label=above:$p_4$] (p4) at (3*\len, -5*\lenh) {};
\node [label=above:$p_5$] (p5) at (4*\len, -6*\lenh) {};
\draw [thick,{-|}] (p1) -- ($(p1) + (4*\len, 0) + (1*\len/6, 0)$);
\draw [thick,{-|}] (p2) -- ($(p2) + (3*\len, 0) + (2*\len/6, 0)$);
\draw [thick,{-|}] (p3) -- ($(p3) + (2*\len, 0) + (3*\len/6, 0)$);
\draw [thick,{-|}] (p4) -- ($(p4) + (1*\len, 0) + (4*\len/6, 0)$);
\draw [thick,{-|}] (p2) -- ($(p2) - (1*\len, 0) - (4*\len/6, 0)$);
\draw [thick,{-|}] (p5) -- ($(p5) - (4*\len, 0) - (1*\len/6, 0)$);
\tikzstyle{every node}=[inner sep=2pt]
\node [label=above:$r_1$] at ($(p1) + (4*\len, 0) + (1*\len/6, 0)$) {};
\node [label=above:$r_2$] at ($(p2) + (3*\len, 0) + (2*\len/6, 0)$) {};
\node [label=above:$r_3$] at ($(p3) + (2*\len, 0) + (3*\len/6, 0)$) {};
\node [label=above:$r_4$] at ($(p4) + (1*\len, 0) + (4*\len/6, 0)$) {};
\node [label=above:$\ell_2$] at ($(p2) - (1*\len, 0) - (4*\len/6, 0)$) {};
\node [label=above:$\ell_5$] at ($(p5) - (4*\len, 0) - (1*\len/6, 0)$) {};
\end{tikzpicture}}
    \end{minipage}
    \begin{minipage}[]{0.4\hsize}
      \centering
      \subcaptionbox{\label{fig:model cricket}}{\begin{tikzpicture}
\def\len{0.4}
\useasboundingbox (0*\len-0.2, -5.5*\len-0.2) rectangle (9*\len+0.5, 3.5*\len+0.5);
\tikzstyle{every node}=[inner sep=1pt]
\node [label=below left :$c_1$] (c1) at (1.0*\len, -1.0*\len) {};
\node [label=below left :$c_2$] (c2) at (2.0*\len, -2.0*\len) {};
\node [label=below left :$c_3$] (c3) at (3.0*\len, -3.0*\len) {};
\node [label=below left :$c_4$] (c4) at (4.0*\len, -4.0*\len) {};
\node [label=below left :$c_5$] (c5) at (5.0*\len, -5.0*\len) {};
\node [label=above right:$t_2$] (t2) at (2.0*\len,  3.0*\len) {};
\node [label=above right:$t_5$] (t5) at (5.0*\len,  0.0*\len) {};
\node [label=above right:$r_1$] (r1) at (6.0*\len, -1.0*\len) {};
\node [label=above right:$r_2$] (r2) at (7.0*\len, -2.0*\len) {};
\node [label=above right:$r_3$] (r3) at (8.0*\len, -3.0*\len) {};
\node [label=above right:$r_4$] (r4) at (9.0*\len, -4.0*\len) {};
\node [label=above left:$\mathcal{L}$] (L) at (0.5*\len, -0.5*\len) {};
\draw [ultra thick] (L) -- (5.5*\len, -5.5*\len);
\node [label=above left:$\mathcal{L'}$] (L') at (1.5*\len, 3.5*\len) {};
\draw [ultra thick] (L') -- (9.5*\len, -4.5*\len);
\draw [] 
	(c1) -- (r1)
	(t2) -- (c2) -- (r2)
	(c3) -- (r3)
	(c4) -- (r4)
	(t5) -- (c5)
;
\end{tikzpicture}}
    \end{minipage}
  \end{tabular}
  \caption{
	\subref{fig:co-cricket} A graph $G$. 
	\subref{fig:cricket} The complement $\overline{G}$ of $G$. 
	\subref{fig:construction} Illustration of the construction of an MPT model in the proof of Lemma~\ref{lem:123-2}. 
	\subref{fig:model cricket} The hook model of $\overline{G}$. 
	The vertices are labeled based on the corners on $\mathcal{L}$. 
	The graph $G$ is $12$-representable by $123$-avoiding word $w = 432152$ 
	obtained as shown in the proof of Theorem~\ref{thm:123}. 
  }
  \label{fig:ex 123}
\end{figure}
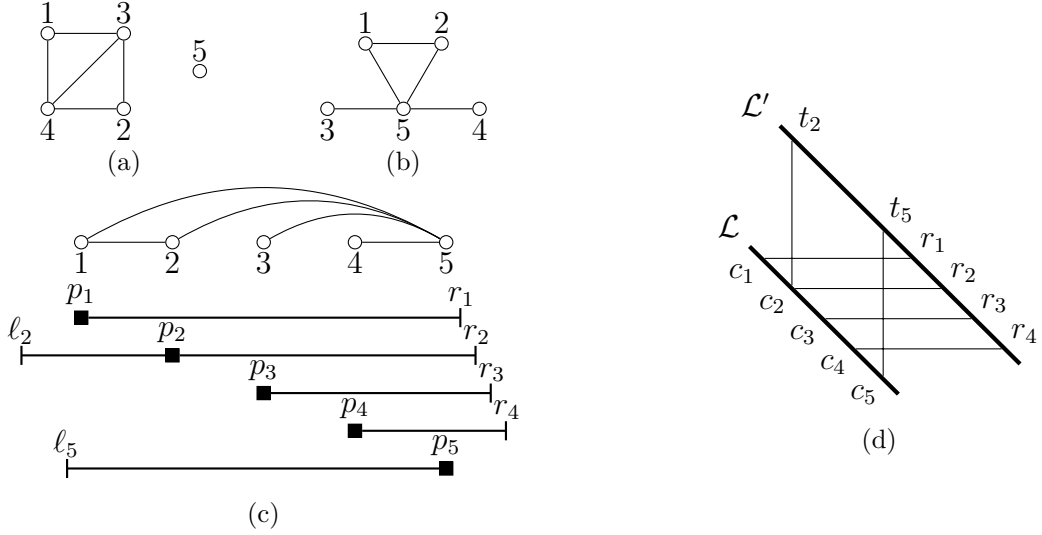

\begin{lemma}\label{lem:123-2}
If a labeled graph does not contain any pattern in Figure~\ref{fig:cfp123}, 
then it is the intersection graph of unit-length hooks and sticks. 
\end{lemma}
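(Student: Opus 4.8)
The plan is to read a unit-length hook/stick model directly off the vertex ordering of $G$. Since $G$ avoids the pattern in Figure~\ref{fig:cfp123}\subref{fig:cfp123-2}, Theorem~\ref{thm:fp_MPT} already tells us that $G$ is an MPT graph, and the dictionary in Figure~\ref{fig:hook model} turns its model into a hook model; I would use this only to fix the combinatorial skeleton. Concretely, I place the corners on $\mathcal{L}$ in the order of the labels and assign each vertex a type according to its neighbourhood: a \emph{hook} if it has both a smaller and a larger neighbour, a vertical stick if it has only smaller neighbours, and a horizontal stick if it has only larger ones (an isolated vertex can be parked far to one side). The reason for recording only the \emph{direction} of the reach is the clean intersection rule for equal-length objects reaching up to $\mathcal{L}'$: for $u \prec v$, the two objects meet if and only if $u$ carries a horizontal part, $v$ carries a vertical part, and the two corners lie within the common reach. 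Thus the type is forced by the data, and every adjacency between a rightward-only and a leftward-only vertex is automatically accounted for.

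The substantive step is to fix the corner positions so that two \emph{compatible} vertices --- $u$ rightward, $v$ leftward, $u \prec v$ --- fall within reach exactly when $uv \in E(G)$. This is a unit-interval (indifference) realisation restricted to the compatible pairs, and its feasibility is equivalent to a consecutiveness condition: for each leftward vertex $v$, the rightward vertices to its left that are adjacent to it should form a suffix of all rightward vertices to its left, and symmetrically for each rightward vertex. I would establish feasibility by the usual left-to-right greedy placement, using the small tie-breaking offsets drawn in Figure~\ref{fig:construction} to put the far endpoints on $\mathcal{L}'$ in the same order as the corners.

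The forbidden patterns enter precisely when checking that this placement introduces no spurious intersection. Because pushing every far endpoint out to $\mathcal{L}'$ only enlarges the objects, all edges of $G$ survive, so the whole difficulty is the converse. A violation of the consecutiveness condition produces $u \prec u' \prec v$ with $uv \in E(G)$, $u'v \notin E(G)$ and $u'$ rightward; taking a larger neighbour $w$ of $u'$ as a witness, the four vertices $u,u',v,w$ induce, according to whether $w$ lies after or before $v$, the pattern in Figure~\ref{fig:cfp123}\subref{fig:cfp123-2} or~\subref{fig:cfp123-4} (and the mirror argument, justified by Proposition~\ref{prop:reverse_supplement}, yields Figure~\ref{fig:cfp123}\subref{fig:cfp123-3} for rightward vertices). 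The remaining danger is a vertex $m$ strictly between an adjacent pair $u \prec v$ with $um \notin E(G)$: if $m$ is leftward it is forced to meet $u$'s horizontal part, so the subcase $mv \notin E(G)$ is exactly Figure~\ref{fig:cfp123}\subref{fig:cfp123-1}, while $mv \in E(G)$ feeds back into the previous patterns through a left-witness of $m$. I expect this case bookkeeping --- tracking which witness lies where and matching it to the correct pattern --- to be the main obstacle, the geometry being routine once the consecutiveness condition is secured.
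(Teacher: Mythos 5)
Your pattern bookkeeping is sound: the derivation of the suffix/prefix (``consecutiveness'') conditions from the patterns of Figure~\ref{fig:cfp123} is correct, and it parallels how the paper itself uses Figures~\ref{fig:cfp123}\subref{fig:cfp123-2}, \ref{fig:cfp123}\subref{fig:cfp123-3}, and \ref{fig:cfp123}\subref{fig:cfp123-4}. The genuine gap is at the center of the argument: you never prove that these conditions are \emph{sufficient} for the unit-length placement to exist. You assert that ``feasibility is equivalent to a consecutiveness condition'' and that ``the usual left-to-right greedy placement'' settles it, but necessity of the conditions is the easy direction; sufficiency is exactly the content of the lemma. There is no off-the-shelf result to invoke here: your realization problem is not unit interval graph realization, because the distance constraints apply only to \emph{compatible} pairs (a vertex with a horizontal part below a vertex with a vertical part), so Roberts-type indifference theorems do not apply directly. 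To close the gap you would need at least (i) to derive from the suffix/prefix conditions that the boundary maps $u \mapsto f(u)$ (largest leftward vertex adjacent to $u$) and $v \mapsto g(v)$ (smallest rightward vertex adjacent to $v$) are monotone along the ordering, and (ii) to state the greedy invariant and prove the placement never gets stuck --- equivalently, that the system $p_v - p_u \le 1$ for compatible edges, $p_v - p_u > 1$ for compatible non-edges, together with $p_1 < \cdots < p_n$, admits no infeasible cycle. Neither step is ``routine geometry''; it is where the mathematics of the lemma lives, and it is the part you flag as unproblematic while worrying instead about the case analysis, which is in fact the easier half.

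For comparison, the paper does this work in two explicit steps: it writes down a concrete MPT model with $p_i = i$ and fractional offsets (correct by pattern \subref{fig:cfp123-2}), then proves a \emph{properness} claim --- no two of the half-intervals $[\ell_i, p_i]$, $[p_i, r_i]$ contain one another --- using patterns \subref{fig:cfp123-3} and \subref{fig:cfp123-4}, and finally normalizes all half-intervals to length $1$ by the left-to-right adjustment used to turn a proper interval model into a unit one~\cite{BW99-DM}, which preserves the order of all endpoints. Properness is precisely the hypothesis that makes that left-to-right normalization succeed; it plays the role your unproven feasibility claim would have to play. Your plan can very likely be completed along these lines (your conditions do imply the needed monotonicity), but as written the proposal establishes only that pattern-freeness implies your combinatorial conditions, not that those conditions yield a model --- so the lemma is not yet proved.
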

\begin{proof}
Let $G$ be a labeled graph containing no pattern in Figure~\ref{fig:cfp123}. 
First, we show that we can assume without loss of generality that 
$G$ contains no isolated vertices. 
Suppose that $G$ contains an isolated vertex $i$. 
Since $G$ does not contain the pattern in Figure~\ref{fig:cfp123}\subref{fig:cfp123-1}, 
there are no edges between vertices larger than $i$ and less than $i$. 
Thus, if two subgraphs induced by vertices larger than $i$ and less than $i$ 
are the intersection graphs of unit-length hooks and sticks, then 
$G$ is the intersection graph of unit-length hooks and sticks. 
\par
We next construct an MPT model of $G$. 
The construction is the same as that in~\cite{CCFHHHS17-DAM,SC15-DMTCS}. 
For each $i \in V(G)$, we define as follows. 
\begin{flalign*}
\ell_i & = \min\left\{i, j - \frac{n-i+1}{n+1}\right\}, \text{where $j$ is the smallest vertex with $ji \in E(G)$}. 
           \\
p_i    & =  i. 
           \\
r_i    & = \max\left\{i, j + \frac{i}{n+1}\right\}, \text{where $j$ is the largest vertex with $ij \in E(G)$}. 
\end{flalign*}
Figure~\ref{fig:ex 123}\subref{fig:construction} illustrates an example of the construction.
\begin{claim}
$\{([\ell_i, r_i], p_i) \colon\ i \in V(G)\}$ is an MPT model of $G$. 
\end{claim}
\begin{proof}[Proof of Claim]
The proof is the same as that in~\cite{CCFHHHS17-DAM,SC15-DMTCS}. 
Let $i$ and $j$ be two vertices with $i < j$. 
If $ij \in E(G)$ then $\ell_j < p_i$ and $p_j < r_i$ by construction; 
that is, $p_i, p_j \in I_i \cap I_j$, 
where $I_i = [\ell_i, r_i]$ denotes the interval of $i$ for each $i \in V(G)$. 
Suppose to the contrary that $ij \notin E(G)$ but $p_i, p_j \in I_i \cap I_j$. 
Since $ij \notin E(G)$ and $\ell_j < p_i$, 
there must be a vertex $i' < i$ such that $i'j \in E(G)$. 
Similarly, $ij \notin E(G)$ and $p_j < r_i$ indicate that 
there must be a vertex $j' > j$ such that $ij' \in E(G)$. 
However, now, the vertices $i', i, j, j'$ form the pattern in Figure~\ref{fig:cfp123}\subref{fig:cfp123-2}, 
a contradiction. 
\end{proof}
\par
We now show the properties the model satisfies. 
Let $V^+ = \{i \colon\ p_i \neq r_i\}$ and $V^- = \{i \colon\ p_i \neq \ell_i\}$. 
For example, in the graph in Figure~\ref{fig:ex 123}, 
$V^+ = \{1, 2, 3, 4\}$ and $V^- = \{2, 5\}$. 
Since we assume that $G$ contains no isolated vertices, $V^+ \cup V^- = V(G)$. 
For each $i \in V^+$, 
let $I_i^+ = [p_i, r_i]$ and $\mathcal{I}^+ = \{I_i^+ \colon\ i \in V^+\}$. 
Similarly, 
for each $i \in V^-$, 
let $I_i^- = [\ell_i, p_i]$ and $\mathcal{I}^- = \{I_i^- \colon\ i \in V^-\}$. 
Moreover, let $\mathcal{I} = \mathcal{I}^+ \cup \mathcal{I}^-$. 
\begin{claim}
No two intervals of $\mathcal{I}$ contain each other. 
\end{claim}
\begin{proof}[Proof of Claim]
Suppose first that an interval $I_i^+ \in \mathcal{I}^+$ contains 
another interval $I_j^+ \in \mathcal{I}^+$. 
Then, $i < j$. 
There must be two vertices $k$ and ${\ell}$ with $j < k < \ell$ such that 
$i{\ell}, jk \in E(G)$ and $j{\ell} \notin E(G)$. 
However, the vertices $i, j, k, {\ell}$ form a pattern in Figure~\ref{fig:cfp123}\subref{fig:cfp123-4}, 
a contradiction. 
Thus, $I_i^+$ contains no interval of $\mathcal{I}^+$. 
Suppose now that $I_i^+$ contains an interval $I_j^- \in \mathcal{I}^-$. 
Then, $i < j$. 
There must be two vertices $k$ and ${\ell}$ with $i < k < j < \ell$ such that 
$i{\ell}, jk \in E(G)$ and $ji \notin E(G)$. 
However, the vertices $i, k, j, {\ell}$ form a pattern in Figure~\ref{fig:cfp123}\subref{fig:cfp123-3}, 
a contradiction. 
Therefore, $I_i^+$ contains no interval of $\mathcal{I}$. 
Similar arguments would show that no interval of $\mathcal{I}^-$ contains another interval of $\mathcal{I}$. 
\end{proof}
\par
Finally, we adjust all the intervals of $\mathcal{I}$ to length $1$ 
without changing the order of the points of 
$\{\ell_i, p_i, r_i \colon\ i \in V(G)\}$. 
In other words, we change the position of the points so that 
for each vertex $i$, either 
$\ell_i = p_i = r_i - 1$, 
$\ell_i + 1 = p_i = r_i - 1$, or 
$\ell_i + 1 = p_i = r_i$ 
holds. 
Such an MPT model can be transformed into a hook model 
consisting of unit-length hooks and sticks 
by the transformation described above. 
\par
The adjustment can be performed 
in the same way as we obtain a unit interval model from a proper one~\cite{BW99-DM}. 
We process the intervals of $\mathcal{I}$ from left to right. 
Let $I_i^* = [a, b]$ be the unadjusted interval with the leftmost left endpoint. 
When $I_i^*$ does not contain the right endpoints of any other intervals, 
shrink or expand $[a, b]$ to $[a, a+1]$ and 
translate $[b, \infty)$ to $[a+1, \infty)$. 
Now, $I_i^*$ has length $1$. 
The order of points does not change, and 
the adjusted intervals still have length $1$ because no interval contains $I_i^*$. 
When $I_i^*$ contains the right endpoint of some other interval, 
we define $\alpha$ as the rightmost such right endpoint. 
The point $\alpha$ is the right endpoint of an adjusted interval, 
and hence, $\alpha < \min\{a+1, b\}$. 
Now, shrink or expand $[\alpha, b]$ to $[\alpha, a+1]$ and 
translate $[b, \infty)$ to $[a+1, \infty)$. 
Performing the same procedure for each interval from left to right yields the required model. 
\end{proof}

\begin{lemma}\label{lem:123-3}
If a graph is the intersection graph of unit-length hooks and sticks, then 
its complement is $12$-representable by a $123$-avoiding word. 
\end{lemma}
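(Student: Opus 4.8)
The plan is to read a $123$-avoiding $12$-representant directly off the geometry of the model. Let $G$ be the intersection graph of a family of unit-length hooks and sticks; I want to $12$-represent $\overline{G}$. First I would sort the objects by the $x$-coordinate of their corners on $\mathcal{L}$ and write these coordinates as $c_1 < c_2 < \cdots < c_n$, which I assume to be pairwise distinct (this holds for the unit-length models produced by Lemma~\ref{lem:123-2}, to which the lemma is applied); I label the object with corner $c_i$ by $i$, so that the labeling is exactly the corner order. Since each object has unit length, every vertex owns a horizontal arm, a vertical arm, or both; let $A$ (resp.\ $B$) be the set of vertices owning a horizontal (resp.\ vertical) arm, so that $A \cup B = V(G)$.

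The key geometric step is an intersection criterion: for $i < j$, the objects $i$ and $j$ meet if and only if $i \in A$, $j \in B$, and $c_j \le c_i + 1$. I would prove this by a short case analysis. Because $c_i < c_j$, the only pair of arms that can possibly cross is the horizontal arm of $i$ (on the line $y = -c_i$, spanning $x \in [c_i, c_i+1]$) and the vertical arm of $j$ (on $x = c_j$, spanning $y \in [-c_j, -c_j+1]$); all other arm pairs are parallel or separated by the corner order. These cross exactly when $c_j \le c_i + 1$. Hence $ij \in E(G)$ precisely in the stated case, and $E(\overline{G})$ is its negation.

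Next I would build the word. To each $v \in B$ I attach a \emph{corner point} at abscissa $c_v$ with label $v$, and to each $v \in A$ a \emph{right point} at abscissa $c_v + 1$ with label $v$; every vertex thus contributes one copy, and a hook (a vertex of $A \cap B$) contributes two. Reading these labels in order of \emph{decreasing} abscissa --- breaking the only possible ties, which occur when $c_j = c_i + 1$, by letting right points precede corner points --- yields the candidate word $w$. On the example of Figure~\ref{fig:ex 123} this recipe reproduces $w = 432152$. To confirm that $w$ is a $12$-representant of $\overline{G}$, observe that the rightmost copy of $i$ has abscissa $c_i + 1$ if $i \in A$ and $c_i$ otherwise, while the leftmost copy of $j$ has abscissa $c_j$ if $j \in B$ and $c_j + 1$ otherwise; comparing these two quantities shows that some copy of $i$ precedes some copy of $j$ exactly in the adjacency case of the criterion, so ``every $j$ occurs before every $i$'' holds if and only if $ij \in E(\overline{G})$.

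It remains to verify that $w$ avoids $123$, which I expect to be the main point. Suppose $w$ contained an increasing triple; in word order these are three points $P_1, P_2, P_3$ of strictly decreasing abscissa whose labels have strictly increasing corners $c_{\lambda_1} < c_{\lambda_2} < c_{\lambda_3}$. Comparing $P_1$ with $P_2$ forces $P_1$ to be a right point and $P_2$ a corner point, since a corner point carrying the smaller corner cannot hold the larger abscissa; but comparing $P_2$ with $P_3$ forces $P_2$ to be a right point, a contradiction. Hence no such triple exists. The delicate part throughout is the boundary behaviour when $c_j = c_i + 1$: both the correctness of the representation at touching objects and the $123$-avoidance argument depend on the convention that right points precede corner points at equal abscissa, so I would state this tie-breaking rule explicitly and check that it is consistent with treating closed arms as intersecting.
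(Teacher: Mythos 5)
Your proof is correct and takes essentially the same approach as the paper: you label the objects by the left-to-right order of their corners on $\mathcal{L}$, read the word off the right and top endpoints (your ``right points'' and ``corner points'') in decreasing abscissa, and verify the two claims --- that the word $12$-represents the complement and that it avoids $123$ --- by the same endpoint-comparison arguments the paper uses. The only notable difference is that you state the tie-breaking convention for touching objects ($c_j = c_i + 1$) explicitly, a detail the paper's proof leaves implicit.
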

\begin{proof}
Consider a hook model of the graph $G$. 
Without loss of generality, we assume that 
all the corners are distinct. 
We assign a label $x$ to a vertex $v$ if the corner of $v$ is the $x$th one 
among all corners on $\mathcal{L}$ from top-left to bottom-right. 
Recall that the right and top endpoints of the hooks and sticks 
lie on the line $\mathcal{L'}$. 
We form a word $w$ by the endpoints on $\mathcal{L'}$ so that 
the $x$th letter of $w$ is the label of a vertex $v$
if, among all endpoints on $\mathcal{L'}$ 
(i.e., both right and top endpoints) \emph{from bottom-right to top-left}, 
the $x$th endpoint is of the hook or stick of $v$. 
\par
Figure~\ref{fig:ex 123} illustrates an example of the construction. 
The graph in Figure~\ref{fig:ex 123}\subref{fig:cricket} is 
the intersection graph of unit-length hooks and sticks 
in Figure~\ref{fig:ex 123}\subref{fig:model cricket}. 
The vertices are labeled based on the corners on $\mathcal{L}$. 
By reading the labels of the endpoints on $\mathcal{L'}$ 
\emph{from bottom-right to top-left}, we obtain the word $w = 432151$, 
which is a $12$-representant of
the graph in Figure~\ref{fig:ex 123}\subref{fig:co-cricket}. 
\par
We first claim that the word $w$ is a $12$-representant of the complement $\overline{G}$ of $G$. 
Let $c_x$, $r_x$, and $t_x$ denote the corner, right, and top endpoints of the hook or stick of a vertex (labeled with) $x$, 
respectively. 
Let $x$ and $y$ be two vertices with $x < y$. 
Consider the case when both vertices correspond to unit-length hooks; 
the other cases can be proven in the same way. 
By definition, $c_x$ lies to the left of $c_y$ on $\mathcal{L}$. 
Then, 
the hooks of $x$ and $y$ do not intersect 
if and only if 
both $r_x$ and $t_x$ on $\mathcal{L'}$ lie to the left of both $r_y$ and $t_y$ 
if and only if 
$w_{\{x, y\}} = yyxx$. 
Thus, $w$ is a $12$-representant of $\overline{G}$. 
\par
We now claim that the word $w = w_1 w_2 \ldots w_{\ell}$ is $123$-avoiding. 
Suppose to the contrary that there is a triple of indices $i < j < k$ such that $w_i < w_j < w_k$. 
Let $w_i = x$, $w_j = y$, and $w_k = z$, that is, $x < y < z$. 
Since $c_x$ lies to the left of $c_z$, 
the occurrences $w_i$ and $w_k$ correspond to $r_x$ and $t_z$, respectively. 
Since $c_y$ lies between $c_x$ and $c_z$, 
neither $r_y$ nor $t_y$ lies between $r_x$ and $t_z$, a contradiction. 
Thus, $w$ is $123$-avoiding. 
\end{proof}

The lemmas above indicate the main theorem of this section. 
\begin{theorem}\label{thm:123}
The following statements are equivalent for a graph $G$: 
\begin{enumerate}[label=\textup{(\roman*)}]
\item $G$ is $12$-representable by a $123$-avoiding word. 
\item $G$ has a vertex ordering that does not contain any pattern in Figure~\ref{fig:fp123}. 
\item $G$ is the complement of the intersection graph of unit-length hooks and sticks. 
\end{enumerate}
\end{theorem}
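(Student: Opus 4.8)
The plan is to prove the three statements equivalent by establishing the cycle of implications $(\mathrm{i}) \Rightarrow (\mathrm{ii}) \Rightarrow (\mathrm{iii}) \Rightarrow (\mathrm{i})$, since each of these single implications is already supplied by one of the three preceding lemmas. The work is therefore almost entirely bookkeeping: I would simply assemble Lemmas~\ref{lem:123-1},~\ref{lem:123-2}, and~\ref{lem:123-3} in the right order and note that the forbidden-pattern condition in (ii) matches the hypotheses of those lemmas.

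First I would observe that $(\mathrm{i}) \Rightarrow (\mathrm{ii})$ is the contrapositive of Lemma~\ref{lem:123-1}. That lemma states that if a labeled graph contains any pattern in Figure~\ref{fig:fp123}, then it is not $12$-representable by a $123$-avoiding word. Hence, if $G$ is $123$-representable, then in the vertex ordering induced by its labeling, $G$ contains none of the patterns in Figure~\ref{fig:fp123}, which is exactly (ii). Here I would lean on the earlier remark in the paper that a labeling and the vertex ordering it induces are interchangeable, so that ``has a vertex ordering avoiding the patterns'' and ``the labeled graph avoids the patterns'' refer to the same thing.

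Next, $(\mathrm{ii}) \Rightarrow (\mathrm{iii})$ follows from Lemma~\ref{lem:123-2}. Fix a vertex ordering of $G$ realizing (ii), i.e.\ avoiding all patterns in Figure~\ref{fig:fp123}. The key translation step is that avoiding the patterns in Figure~\ref{fig:fp123} is the same as the complement $\overline{G}$ (with the same vertex ordering) avoiding the complementary patterns in Figure~\ref{fig:cfp123}, since Figure~\ref{fig:cfp123} was defined precisely as the edge-complements of Figure~\ref{fig:fp123} and complementation preserves the vertex ordering. Lemma~\ref{lem:123-2} then says that a labeled graph avoiding the patterns of Figure~\ref{fig:cfp123} is the intersection graph of unit-length hooks and sticks; applying it to $\overline{G}$ shows $\overline{G}$ is such an intersection graph, which is exactly statement (iii) for $G$.

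Finally, $(\mathrm{iii}) \Rightarrow (\mathrm{i})$ is immediate from Lemma~\ref{lem:123-3}, which asserts that the complement of an intersection graph of unit-length hooks and sticks is $12$-representable by a $123$-avoiding word. Since there is no genuine obstacle in the combination itself, the only point requiring a little care is the complementation bookkeeping in the $(\mathrm{ii}) \Rightarrow (\mathrm{iii})$ step: I must make sure the two pattern figures are honest edge-complements of each other vertex-orderwise, so that the hypothesis of Lemma~\ref{lem:123-2} is correctly matched to the complement $\overline{G}$ rather than to $G$ itself. Once that correspondence is stated cleanly, the theorem follows by chaining the three lemmas.
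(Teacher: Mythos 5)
Your proposal is correct and follows exactly the paper's proof: the same cycle of implications $(\mathrm{i}) \Rightarrow (\mathrm{ii}) \Rightarrow (\mathrm{iii}) \Rightarrow (\mathrm{i})$ obtained from Lemmas~\ref{lem:123-1},~\ref{lem:123-2}, and~\ref{lem:123-3}, respectively. The only difference is that you make explicit the complementation bookkeeping in the step $(\mathrm{ii}) \Rightarrow (\mathrm{iii})$ (applying Lemma~\ref{lem:123-2} to $\overline{G}$ via the complementary patterns of Figure~\ref{fig:cfp123}), which the paper leaves implicit; this is a correct and harmless elaboration.
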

\begin{proof}
The implications (i) $\implies$ (ii), (ii) $\implies$ (iii), and (iii) $\implies$ (i) 
follow from Lemmas~\ref{lem:123-1},~\ref{lem:123-2}, and~\ref{lem:123-3}, 
respectively. 
\end{proof}

Theorems~\ref{thm:123} and~\ref{thm:fp_MPT} indicate that 
the class of $123$-representable graphs is a subclass of the complements of MPT graphs. 
\begin{corollary}
A graph is the complement of an MPT graph 
if it is $12$-representable by a $123$-avoiding word. 
\end{corollary}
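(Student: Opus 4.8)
The plan is to read the corollary off directly from the two forbidden-pattern characterizations already in hand: Theorem~\ref{thm:123} for $123$-representable graphs and Theorem~\ref{thm:fp_MPT} for MPT graphs. The guiding observation is that the families of patterns in Figures~\ref{fig:fp123} and~\ref{fig:cfp123} are complements of one another, pattern by pattern; in particular, the pattern in Figure~\ref{fig:cfp123}\subref{fig:cfp123-2} is precisely the complement of the pattern in Figure~\ref{fig:fp123}\subref{fig:fp123-2}. Since complementing a graph while keeping the same vertex ordering sends every induced ordered subgraph to its complement, a vertex ordering of $G$ that avoids a pattern $P$ is exactly a vertex ordering of $\overline{G}$ that avoids the complement of $P$.

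First I would invoke Theorem~\ref{thm:123}. Assuming $G$ is $12$-representable by a $123$-avoiding word, statement~(ii) provides a vertex ordering $\prec$ of $G$ that contains none of the four patterns in Figure~\ref{fig:fp123}. For this corollary only one of them is needed: in particular, $\prec$ avoids the pattern in Figure~\ref{fig:fp123}\subref{fig:fp123-2}, and the remaining three patterns play no role.

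Next I would pass to the complement. Because $\prec$ avoids Figure~\ref{fig:fp123}\subref{fig:fp123-2} in $G$, the same ordering $\prec$, now viewed as a vertex ordering of $\overline{G}$, avoids the complement pattern, namely the one in Figure~\ref{fig:cfp123}\subref{fig:cfp123-2}. Theorem~\ref{thm:fp_MPT} then applies to $\overline{G}$ and shows that $\overline{G}$ is an MPT graph. Consequently $G$ is the complement of the MPT graph $\overline{G}$, which is exactly the assertion of the corollary.

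The argument is short, so the only point requiring care — and the step I would treat as the main obstacle — is a clean justification of the complement-pattern correspondence. One must verify that the edge and non-edge roles in Figure~\ref{fig:fp123}\subref{fig:fp123-2} and Figure~\ref{fig:cfp123}\subref{fig:cfp123-2} are swapped on \emph{every} vertex pair, and that the pairs left undrawn (hence unconstrained) remain unconstrained under complementation, so that pattern avoidance genuinely transfers between $G$ and $\overline{G}$ under the fixed ordering $\prec$. Once this correspondence is recorded, no further computation is needed.
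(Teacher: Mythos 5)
Your proof is correct and takes essentially the same route as the paper: the corollary appears there as an immediate consequence of Theorems~\ref{thm:123} and~\ref{thm:fp_MPT}, namely passing from statement~(ii) of Theorem~\ref{thm:123} to the same vertex ordering of $\overline{G}$, which then avoids the pattern in Figure~\ref{fig:cfp123}\subref{fig:cfp123-2} because it is the complement of the pattern in Figure~\ref{fig:fp123}\subref{fig:fp123-2}. The complement-pattern correspondence you flag as the main point to verify (edges and non-edges swap under complementation while undrawn pairs remain unconstrained) is exactly the observation the paper relies on implicitly when it introduces Figure~\ref{fig:cfp123} as the complements of the patterns in Figure~\ref{fig:fp123}.
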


Furthermore, 
complements of bipartite permutation graphs and unit interval graphs are 
subclasses of $123$-representable graphs. 
\begin{corollary}
The complements of bipartite permutation graphs 
are $12$-representable by $123$-avoiding words. 
\end{corollary}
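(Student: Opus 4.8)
The plan is to avoid the geometric route through Theorem~\ref{thm:123}(iii) altogether and instead exploit the characterization of bipartite permutation graphs in Theorem~\ref{thm:321}, together with the elementary fact that reversing a permutation simultaneously complements its permutation graph and interchanges the patterns $321$ and $123$. Concretely, I would start from an arbitrary bipartite permutation graph $B$ and, using Theorem~\ref{thm:321} (via the $321$-avoiding permutation produced in its proof, cf.\ Lemma~\ref{lem:321:permutation}), fix a $321$-avoiding permutation $\pi$ on $[n]$ whose permutation graph is $B$. By Theorem~\ref{thm:permutation}, $\pi$ is itself a $12$-representant of $B$.

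Next I would pass to the reverse word $r(\pi)$, which is again a permutation of $[n]$, and establish two facts. First, $r(\pi)$ $12$-represents exactly the complement $\overline{B}$: for $i < j$, the letter $j$ precedes $i$ in $r(\pi)$ if and only if $i$ precedes $j$ in $\pi$, so by the definition of a permutation graph, $i$ and $j$ are adjacent in the permutation graph of $r(\pi)$ precisely when they are nonadjacent in $B$; hence this permutation graph is $\overline{B}$, and by Theorem~\ref{thm:permutation} the permutation $r(\pi)$ $12$-represents it. Second, $r(\pi)$ avoids $123$: since the values of a permutation are distinct, an occurrence of $123$ in $r(\pi)$ is just an increasing subsequence of length three, and reversing the positions turns it into a decreasing subsequence of length three in $\pi$, i.e.\ an occurrence of $321$, contradicting the choice of $\pi$.

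Combining the two facts, $r(\pi)$ is a $123$-avoiding $12$-representant of $\overline{B}$, so $\overline{B}$ is $123$-representable; as $B$ was arbitrary, every complement of a bipartite permutation graph is $123$-representable. I do not expect a serious obstacle: the whole argument reduces to two bookkeeping observations, both immediate from the definitions of the permutation graph and of pattern containment given in Section~\ref{sec:preliminaries}. The only point that warrants care is keeping track that it is the \emph{reverse} $r(\pi)$ (and not, say, the value-complement of $\pi$) that realizes $\overline{B}$; this is exactly the operation under which $321$-avoidance becomes $123$-avoidance, which is why the reduction closes cleanly.
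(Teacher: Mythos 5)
Your proposal is correct and is essentially identical to the paper's own proof: the paper likewise invokes Theorem~\ref{thm:321} and Lemma~\ref{lem:321:permutation} to obtain a $321$-avoiding permutation $\pi$ representing $G$, and then observes that the reverse $r(\pi)$ is $123$-avoiding and $12$-represents $\overline{G}$. The only difference is that you spell out the two bookkeeping verifications (that reversal complements the represented graph and turns $321$-avoidance into $123$-avoidance) which the paper states without proof.
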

\begin{proof}
Let $G$ be a bipartite permutation graph. 
Theorem~\ref{thm:321} indicates that $G$ is $321$-representable. 
By Lemma~\ref{lem:321:permutation}, 
there is a $321$-avoiding permutation $\pi$ representing $G$. 
The reverse $r(\pi)$ of $\pi$ is $123$-avoiding and 
$12$-representing the complement $\overline{G}$ of $G$. 
\end{proof}

\begin{corollary}
The complements of unit interval graphs 
are $12$-representable by $123$-avoiding words. 
\end{corollary}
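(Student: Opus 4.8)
The plan is to apply Theorem~\ref{thm:123}, whose equivalence (i) $\Leftrightarrow$ (iii) reduces the claim to showing that every unit interval graph $G$ is itself the intersection graph of unit-length hooks and sticks; for then $\overline{G}$ is the complement of such an intersection graph and hence, by the implication (iii) $\Rightarrow$ (i), is $12$-representable by a $123$-avoiding word.

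So I would fix a unit interval model $\{[a_v, a_v+1] \colon v \in V(G)\}$ of $G$, so that $uv \in E(G)$ iff $[a_u,a_u+1] \cap [a_v, a_v+1] \ne \emptyset$ iff $|a_u - a_v| \le 1$. I would then realize each vertex $v$ by the unit-length hook whose corner is $(a_v, -a_v)$ on $\mathcal{L}$, with horizontal segment reaching $(a_v+1, -a_v)$ and vertical segment reaching $(a_v, -a_v+1)$; both of these endpoints lie on $\mathcal{L'}$, exactly as required for a unit-length hook.

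The heart of the argument is a short case analysis of which segments of two such hooks can meet. Assuming $a_u < a_v$, the corner of $u$ lies above-left of the corner of $v$ on $\mathcal{L}$. The two horizontal segments lie at distinct heights and the two vertical segments lie at distinct abscissae, so neither pair meets; and the vertical segment of $u$ sits at $x = a_u$, to the left of the horizontal segment of $v$ (which spans $x \in [a_v, a_v+1]$), so these do not meet either. The only remaining pair is the horizontal segment of $u$ (at height $-a_u$, spanning $[a_u, a_u+1]$) against the vertical segment of $v$ (at $x = a_v$, spanning heights $[-a_v, -a_v+1]$), and these cross precisely when $a_v \le a_u + 1$, i.e.\ when $a_v - a_u \le 1$. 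Hence the intersection graph of these hooks has $uv$ as an edge exactly when $|a_u - a_v| \le 1$; that is, it equals $G$.

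This exhibits $G$ as the intersection graph of unit-length hooks (no sticks are needed, which is permitted), so $\overline{G}$ satisfies condition (iii) of Theorem~\ref{thm:123} and is therefore $12$-representable by a $123$-avoiding word. I expect no serious obstacle here: the routine segment-crossing computation is the bulk of the work, and the only delicate point is the boundary case $a_v - a_u = 1$. There the touching intervals $[a_u,a_u+1]$ and $[a_v, a_v+1]$ already form an edge, while the corresponding segments meet in their common endpoint $(a_v, -a_u)$, so closed intervals and closed segments agree and the adjacencies match without any perturbation.
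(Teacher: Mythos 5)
Your proposal is correct and follows essentially the same route as the paper: both invoke the equivalence (i) $\Leftrightarrow$ (iii) of Theorem~\ref{thm:123} and realize the unit interval graph itself as an intersection graph of unit-length hooks, with the hook for interval $[a_v,a_v+1]$ having corner $(a_v,-a_v)$ (the paper phrases this by placing intervals of length $\sqrt{2}$ on $\mathcal{L'}$ and hanging hooks from them, which yields exactly your hooks). The only difference is that you carry out the segment-crossing verification explicitly, which the paper leaves implicit.
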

\begin{proof}
Let $G$ be a unit interval graph. 
By Theorem~\ref{thm:123}, 
it suffices to show that $G$ is the intersection graph of unit-length hooks. 
Consider an interval model of $G$ lying on the line $\mathcal{L'}: y = -x + 1$ 
in which each interval is of length $\sqrt{2}$. 
For each interval, we can form a hook whose top and right endpoints are 
the left and right endpoints of the interval, respectively. 
The collection of such hooks is a hook model of $G$. 
\end{proof}

\section{$132$-representable graphs}\label{sec:132}
In this section, we show a forbidden pattern characterization 
and the related results for $132$-representable graphs. 
\begin{lemma}\label{lem:132_1}
If a labeled graph contains a pattern in Figure~\ref{fig:fp132}, 
then it is not $12$-representable by a $132$-avoiding word. 
\end{lemma}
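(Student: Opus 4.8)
The plan is to mirror the proof of Lemma~\ref{lem:123-1}. Since $132$-representability is hereditary (Proposition~\ref{prop:labeled induced subgraph}), it suffices to check each pattern in Figure~\ref{fig:fp132} separately and show that it admits no $132$-avoiding $12$-representant. I would first dispose of any pattern that already appears among the forbidden patterns of $12$-representability in Figure~\ref{fig:fp}: such a pattern is not $12$-representable at all by Theorem~\ref{thm:fp}, and hence a fortiori not $132$-representable. (This covers, in particular, the three-vertex pattern of Figure~\ref{fig:fp}\subref{fig:fp1}.) For every remaining pattern I would argue by contradiction: assume a $12$-representant $w$ exists, translate each prescribed edge and non-edge into a statement about the relative positions of the letters in $w$, and then exhibit positions whose letters reduce to a word containing $132$.

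The heart of the argument is a single three-vertex forcing, which I would prove first since the remaining cases are elaborations of it. Take three vertices $x \prec y \prec z$ with $yz \in E$ and $xz \notin E$. Because $x < z$ and $xz \notin E$, some copy of $x$ occurs before some copy of $z$; fix such a copy $Z$ of $z$ having a copy of $x$ to its left. Because $y < z$ and $yz \in E$, every copy of $z$ occurs before every copy of $y$, so $Z$ occurs before the (necessarily present) copies of $y$. Reading the three positions in order of occurrence yields the letters $x$, $z$, $y$ with $x < y < z$, that is, an occurrence of $132$. The reason this is clean is that the edge condition $yz \in E$ is an ``all-before-all'' statement: an arbitrary copy of $z$ lies before an arbitrary copy of $y$, so no accounting of which of the (at most two, by Theorem~\ref{thm:at most twice}) copies is chosen is needed. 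Incidentally, this case already shows that $132$-representable graphs lie inside the co-interval graphs, matching the assertion in the introduction.

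For the remaining patterns I would reuse the ``leftmost/rightmost'' bookkeeping of the proof of Lemma~\ref{lem:123-1}. Each edge between two vertices $i < j$ gives the strong fact that the rightmost copy of $j$ precedes the leftmost copy of $i$, while each non-edge gives only the weak fact that some copy of the smaller letter precedes some copy of the larger. Splicing these facts into one chain of increasing positions forces a short subword: for a four-vertex pattern one typically recovers a subsequence reducing to a length-four word such as $1324$ or $1423$, each of which visibly contains $132$ (for instance $1324$ contains $132$ on its first three positions, and $1423$ contains it on positions $1,2,3$). Producing the correct chain for each pattern, and confirming that the resulting reduced word really does contain $132$, is the bulk of the remaining work.

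The main obstacle is exactly the asymmetry between the ``all-before-all'' facts coming from edges and the ``some-before-some'' facts coming from non-edges, compounded by the fact that a letter may appear twice. In the tidy three-vertex case the single edge supplies enough strength to pin the chain down regardless of the copies used; but a four-vertex pattern whose two decisive comparisons both come from non-edges forces $132$ only through a carefully selected pair of extreme occurrences, and a careless choice of copy can break the increasing chain. I would therefore adopt the fixed convention of taking rightmost copies on the edge side and the witnessing copies on the non-edge side, and verify case by case that under this convention the forced tuple of positions is genuinely increasing and carries the intended values. Unlike the $123$ case, the reverse--complement symmetry of Proposition~\ref{prop:reverse_supplement} sends $132$ to $213$ rather than back to $132$, so I do not expect the patterns to collapse into symmetric pairs and would check each one directly.
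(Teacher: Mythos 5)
Your overall strategy is exactly the paper's: reduce via Proposition~\ref{prop:labeled induced subgraph} to showing that each individual pattern admits no $132$-avoiding $12$-representant, then force an occurrence of $132$ by positional bookkeeping. Your three-vertex forcing ($x \prec y \prec z$, $yz \in E$, $xz \notin E$: some $x$ precedes some $z$, and every $z$ precedes every $y$, giving $x \cdots z \cdots y$, an occurrence of $132$) is precisely the paper's argument for the pattern in Figure~\ref{fig:fp132}\subref{fig:fp132-1}, and it is complete and correct. (Your preliminary ``disposal'' step via Theorem~\ref{thm:fp} is harmless but redundant here: unlike Lemma~\ref{lem:123-1}, where the pattern of Figure~\ref{fig:fp123}\subref{fig:fp123-1} literally coincides with Figure~\ref{fig:fp}\subref{fig:fp1}, neither pattern of Figure~\ref{fig:fp132} requires it, since your three-vertex argument never consults the unspecified $12$-edge.)

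The genuine gap is the four-vertex pattern of Figure~\ref{fig:fp132}\subref{fig:fp132-2}, namely vertices $1<2<3<4$ with $14 \in E$ and $13, 24 \notin E$: you never prove it. You announce the convention (rightmost copies for edges, witnessing copies for non-edges) and explicitly defer ``producing the correct chain'' as remaining work; moreover, your predicted forced subwords $1324$ and $1423$ are not what this pattern yields, which suggests the case was not actually worked out. The chain your own convention produces is: since $24 \notin E$, some $2$ precedes the rightmost $4$; since $14 \in E$, the rightmost $4$ precedes the leftmost $1$; since $13 \notin E$, some $3$ follows the leftmost $1$. Hence any $12$-representant contains occurrences of $2, 4, 1, 3$ in this order, i.e.\ the pattern $2413$, whose subsequence $2, 4, 3$ reduces to $132$, the desired contradiction. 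This short instantiation is exactly what the paper supplies and what your proposal lacks; as written, your argument establishes the lemma only for the first of the two forbidden patterns.
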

\begin{proof}
By Proposition~\ref{prop:labeled induced subgraph}, 
it suffices to show that any pattern in Figure~\ref{fig:fp132} 
is not $12$-representable by a $132$-avoiding word. 
Let $H_1$ be a graph with 
$V(H_1) = \{1, 2, 3\}$ such that $13 \notin E(H_1)$ and $23 \in E(H_1)$. 
Suppose that there is a $12$-representant $w_1$ of $H_1$. 
Since $23 \in E(H_1)$, 
the rightmost $3$ occurs before the leftmost $2$ in $w_1$. 
Since $13 \notin E(H_1)$, some $1$ occurs before the rightmost $3$. 
Thus, the pattern $132$ occurs in $w_1$. 
Hence, the pattern in Figure~\ref{fig:fp132}\subref{fig:fp132-1} 
is not $12$-representable by a $132$-avoiding word. 
\par
Let $H_2$ be a graph with 
$V(H_2) = \{1, 2, 3, 4\}$ such that $14 \in E(H_2)$ and $13, 24 \notin E(H_2)$. 
Suppose that there is a $12$-representant $w_2$ of $H_2$. 
Since $14 \in E(H_2)$, 
the rightmost $4$ occurs before the leftmost $1$ in $w_2$. 
Since $13 \notin E(H_2)$, some $3$ occurs after the leftmost $1$. 
Since $24 \notin E(H_2)$, some $2$ occurs before the rightmost $4$. 
Thus, the pattern $2413$ occurs in $w_2$. 
Hence, $w_2$ contains the pattern $132$. 
Therefore, the pattern in Figure~\ref{fig:fp132}\subref{fig:fp132-2} 
is not $12$-representable by a $132$-avoiding word. 
\end{proof}

\begin{figure}[ht]
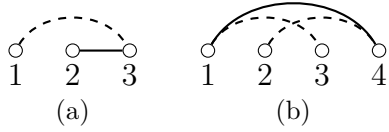

  \centering
  \subcaptionbox{\label{fig:fp132-1}}{\begin{tikzpicture}
\input{./fig/pattern/template_fp3}
\draw [thick] (y) -- (z);
\draw [thick,dashed] (x) to [out=60, in=120] (z);
\end{tikzpicture}}
  \subcaptionbox{\label{fig:fp132-2}}{\begin{tikzpicture}
\input{./fig/pattern/template_fp4}
\draw [thick,dashed] (x) to [out=60, in=120] (z);
\draw [thick,dashed] (y) to [out=60, in=120] (w);
\draw [thick] (x) to [out=60, in=120] (w);
\end{tikzpicture}}
  \caption{Forbidden patterns of $132$-representable graphs. }
  \label{fig:fp132}
\end{figure}

\begin{lemma}\label{lem:132_2}
If a labeled graph does not contain any pattern in Figure~\ref{fig:fp132}, 
then it is $12$-representable by a $132$-avoiding word. 
\end{lemma}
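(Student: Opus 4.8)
The plan is to prove the contrapositive's converse directly: given a labeled graph $G$ whose vertex ordering avoids both patterns in Figure~\ref{fig:fp132}, I will construct an explicit $132$-avoiding $12$-representant. The first forbidden pattern, Figure~\ref{fig:fp132}\subref{fig:fp132-1}, is exactly the pattern in Figure~\ref{fig:fp123}\subref{fig:fp123-1} (equivalently Figure~\ref{fig:fp}\subref{fig:fp1}), so its absence already guarantees by Theorem~\ref{thm:fp} that $G$ is $12$-representable; the work lies in producing a representant that is also $132$-avoiding, which is forced by the absence of the second pattern. By Proposition~\ref{prop:labeled induced subgraph} (the hereditary version) I may assume each letter occurs at most twice, so I want to assign to each vertex $i$ at most two occurrences and interleave them appropriately.

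First I would translate the two structural conditions into statements about the neighborhoods in the given ordering. Writing $N^-(i)$ and $N^+(i)$ for the neighbors of $i$ that come earlier and later in the ordering, the absence of Figure~\ref{fig:fp132}\subref{fig:fp132-1} says that whenever $i \prec j$ and $ij \in E(G)$, every vertex $k$ with $i \prec k \prec j$ is also adjacent to $j$; that is, the earlier-neighborhood of each vertex is a suffix of its predecessor set (an ``interval'' ending just before the vertex). This is the same structure exploited in the interval-graph and MPT constructions earlier in the paper. The absence of Figure~\ref{fig:fp132}\subref{fig:fp132-2} is a $2413$-type obstruction and will be what prevents a $132$ from being unavoidable.

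Second, I would write down a concrete construction of the word. The natural template, mirroring the placement of endpoints used in Lemmas~\ref{lem:123-2} and~\ref{lem:123-3}, is to emit for each vertex $i$ a ``right'' occurrence and, when needed, a ``left'' occurrence, ordering the occurrences so that two vertices $i \prec j$ realize the adjacency test ``every $j$ before every $i$'' exactly when $ij \in E(G)$. I would define the positions of these occurrences from the neighborhood data above, then verify two things in turn: (a) the word $12$-represents $G$, by checking that for each pair $i \prec j$ the restriction $w_{\{i,j\}}$ avoids $12$ iff they are adjacent, using the suffix/interval property from the first forbidden pattern; and (b) the word avoids $132$, by assuming an occurrence $w_a w_b w_c$ reduces to $132$ and extracting from the three underlying vertices a copy of Figure~\ref{fig:fp132}\subref{fig:fp132-2} (or of \subref{fig:fp132-1}), contradicting the hypothesis.

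The main obstacle I anticipate is step (b): showing the constructed word is genuinely $132$-avoiding. A $132$ pattern can arise from three letters with repetitions in several configurations, so I must carefully enumerate which occurrences (left versus right copies of which vertices) can play the roles of the ``$1$'', ``$3$'', and ``$2$'' in a $132$, and argue that in each case the adjacency relations forced on the underlying vertices reproduce one of the two forbidden ordered subgraphs. Getting the construction to simultaneously realize the correct adjacencies \emph{and} preclude every $132$ configuration is the delicate balancing act; I expect to lean on the second forbidden pattern precisely at the configuration where a left copy of a middle vertex would otherwise create the ascending-then-dropping shape, and on the at-most-twice normalization from Proposition~\ref{prop:labeled induced subgraph} to keep the case analysis finite.
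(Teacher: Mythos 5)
Your outline does point in the paper's general direction (derive interval-type endpoint data from the ordering, read it off as a word, and invoke the second pattern only in the $132$-avoidance check), but as written it contains two concrete errors that would derail the construction. First, Figure~\ref{fig:fp132}\subref{fig:fp132-1} is \emph{not} the same pattern as Figure~\ref{fig:fp123}\subref{fig:fp123-1}: in Figure~\ref{fig:fp132}\subref{fig:fp132-1} the edge between vertices $1$ and $2$ is unconstrained, so the pattern forbids \emph{every} triple $i \prec k \prec j$ with $kj \in E(G)$ and $ij \notin E(G)$, whether or not $ik$ is an edge. Consequently your neighborhood translation is backwards: absence of this pattern says that the earlier neighborhood of each vertex $j$ is \emph{downward closed}, i.e.\ a prefix $\{1,\dots,\ell'_j-1\}$ of the vertices preceding $j$ --- not a suffix ending just before $j$. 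The suffix property is the interval-graph pattern of Figure~\ref{fig:fp_simple}\subref{fig:fp_m_int}; the prefix property says instead that $G$ is a \emph{co-interval} graph, and the paper accordingly builds an interval model of $\overline{G}$ (with $r_i = i$ and $\ell_i$ slightly to the left of the smallest non-neighbor of $i$) and reads the $2n$ endpoints from right to left to form $w$. A construction organized around your suffix reading would represent the wrong graph. A smaller point: even under your (incorrect) identification, Theorem~\ref{thm:fp} requires excluding all three patterns of Figure~\ref{fig:fp}; the fact that avoiding Figure~\ref{fig:fp132}\subref{fig:fp132-1} alone suffices is true, but only because each of those three patterns contains a copy of it, an observation you neither state nor prove.

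Second, and more fundamentally, the actual content of the lemma --- the explicit word and the two verifications --- is deferred rather than carried out. You say you ``would define the positions of these occurrences from the neighborhood data'' and then check (a) and (b), and you yourself flag (b), the $132$-avoidance case analysis, as the anticipated obstacle; but that is precisely where the lemma lives. In the paper's proof, (a) is immediate from the model: $w_{\{i,j\}} = jjii$ iff $I_i$ lies entirely to the left of $I_j$ iff $ij \in E(G)$; and for (b), a hypothetical occurrence of $132$ is traced through the endpoint positions to force the existence of four vertices $d \prec a \prec b \prec c$ with $dc \in E(G)$ and $db, ac \notin E(G)$, which is exactly the pattern of Figure~\ref{fig:fp132}\subref{fig:fp132-2}, a contradiction. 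Without executing this step (or an equivalent one), what you have is a plan with faulty structural premises, not a proof.
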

\begin{proof}
Let $G$ be a labeled graph containing no pattern in Figure~\ref{fig:fp132}. 
By Theorem~\ref{thm:fp_m_int}, the graph $G$ is a co-interval graph. 
We first construct an interval model of the complement $\overline{G}$ of $G$. 
Since $G$ contains no pattern in Figure~\ref{fig:fp132}\subref{fig:fp132-1}, 
for each vertex $i$ of $G$, there is an index $\ell'_i \leq i$ such that 
$i$ is adjacent to every vertex in $\{1, 2, \ldots, \ell'_i - 1\}$ and 
non-adjacent to any vertex in $\{\ell'_i, \ell'_i+1, \ldots, i-1\}$. 
Formally, 
\begin{flalign*}
\ell'_i & = \min\left\{i, j\right\}, \text{where $j$ is the smallest vertex with $ji \notin E(G)$}. 
\end{flalign*}
Let 
$\ell_i = \ell'_i - \frac{i}{n+1}$, 
$r_i = i$, and
$I_i = [\ell_i, r_i]$. 
The collection $\{I_i \colon\ i \in V(G)\}$ is an interval model of $\overline{G}$. 
\par
Figure~\ref{fig:co-X58} illustrates an example of the construction. 
We can check that the graph in Figure~\ref{fig:co-X58} 
contains no pattern in Figure~\ref{fig:fp132}. 
We have $\ell'_1 = 1$, $\ell'_2 = 1$, $\ell'_3 = 2$, $\ell'_4 = 4$, $\ell'_5 = 2$, and $\ell'_6 = 3$. 
The obtained model is shown in Figure~\ref{fig:co-X58}\subref{fig:co-X58:model}. 

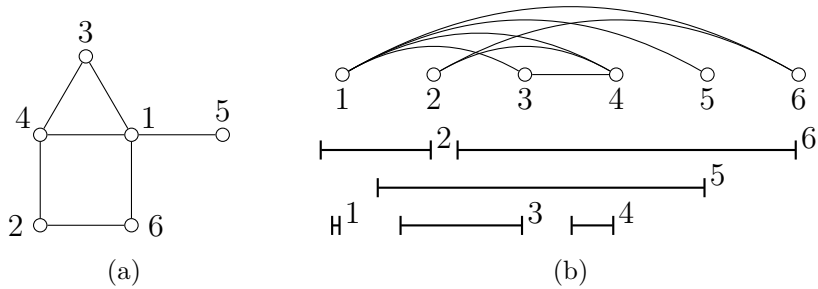
\begin{figure}[ht]
  \centering
  \subcaptionbox{\label{fig:co-X58:graph}}{\begin{tikzpicture}
\def\len{1.2}
\useasboundingbox (-0.7, -0.2) rectangle (2*\len+0.5, 2*\len+0.4);
\tikzstyle{every node}=[draw,circle,fill=white,minimum size=5pt,inner sep=1pt]
\node [label=left:$2$]        (a) at (0, 0) {};
\node [label=right:$6$]       (b) at (1*\len, 0) {};
\node [label=above left:$4$]  (c) at (0, 1*\len) {};
\node [label=above right:$1$] (d) at (1*\len, 1*\len) {};
\node [label=above:$3$]       (e) at ($(c) + (60:\len)$) {};
\node [label=above:$5$]       (f) at (2*\len, 1*\len) {};
\draw [] (c) -- (a) -- (b) -- (d) -- (e) -- (c) -- (d) -- (f);
\end{tikzpicture}}
  \subcaptionbox{\label{fig:co-X58:model}}{\begin{tikzpicture}
\def\len{1.2}
\def\la{-1.0}
\def\lb{-1.5}
\def\lc{-2.0}
\useasboundingbox (-0.8, -2.2) rectangle (5*\len+0.8, 1.0);
\tikzstyle{every node}=[draw,circle,fill=white,minimum size=5pt,inner sep=1pt]
\node [label=below:$1$] (v1) at (0*\len, 0) {};
\node [label=below:$2$] (v2) at (1*\len, 0) {};
\node [label=below:$3$] (v3) at (2*\len, 0) {};
\node [label=below:$4$] (v4) at (3*\len, 0) {};
\node [label=below:$5$] (v5) at (4*\len, 0) {};
\node [label=below:$6$] (v6) at (5*\len, 0) {};
\draw [] (v1) to [out=30, in=150] (v3);
\draw [] (v1) to [out=30, in=150] (v4);
\draw [] (v1) to [out=30, in=150] (v5);
\draw [] (v1) to [out=30, in=150] (v6);
\draw [] (v2) to [out=30, in=150] (v4);
\draw [] (v2) to [out=30, in=150] (v6);
\draw [] (v3) to (v4);
\tikzstyle{every node}=[minimum size=1pt,inner sep=0pt]
\node [label=above right:$1$] (a) at (0*\len, \lc) {};
\draw [thick,{|-|}] (a) -- ($(a) - (0*\len, 0) - (1*\len/8, 0)$);
\node [label=above right:$2$] (b) at (1*\len, \la) {};
\draw [thick,{|-|}] (b) -- ($(b) - (1*\len, 0) - (2*\len/8, 0)$);
\node [label=above right:$3$] (c) at (2*\len, \lc) {};
\draw [thick,{|-|}] (c) -- ($(c) - (1*\len, 0) - (3*\len/8, 0)$);
\node [label=above right:$4$] (d) at (3*\len, \lc) {};
\draw [thick,{|-|}] (d) -- ($(d) - (0*\len, 0) - (4*\len/8, 0)$);
\node [label=above right:$5$] (e) at (4*\len, \lb) {};
\draw [thick,{|-|}] (e) -- ($(e) - (3*\len, 0) - (5*\len/8, 0)$);
\node [label=above right:$6$] (f) at (5*\len, \la) {};
\draw [thick,{|-|}] (f) -- ($(f) - (3*\len, 0) - (6*\len/8, 0)$);
\end{tikzpicture}}
  \caption{
    \subref{fig:co-X58:graph} A $132$-representable graph. 
    \subref{fig:co-X58:model} Illustration of the construction of the interval model in the proof of Lemma~\ref{lem:132_2}. 
  }
  \label{fig:co-X58}
\end{figure}

Let $w$ be a word obtained by reading the indices of 
the endpoints of the intervals 
(i.e., both left and right endpoints) \emph{from right to left}. 
For example, from the graph in Figure~\ref{fig:co-X58}, 
we obtain the word $w = 654436235112$. 
Since all endpoints are distinct, $w$ is uniquely defined. 
For any two vertices $i$ and $j$ with $i < j$, 
we have $w_{\{i, j\}} = jjii$ if and only if $I_i$ lies entirely to the left of $I_j$ 
if and only if $ij \in E(G)$. 
Thus, $w$ is a $12$-representant of $G$. 
\par
We claim that $w = w_1 w_2 \ldots w_{2n}$ is $132$-avoiding. 
Suppose to the contrary that there is a triple of indices $i < j < k$ such that $w_i < w_k < w_j$. 
Let $w_i = a$, $w_j = c$, and $w_k = b$, that is, $a < b < c$. 
The occurrence $w_j$ corresponds to either $\ell_c$ or $r_c$. 
If $w_j$ corresponds to $r_c$, then $\ell_a < r_a = a < c = r_c$ indicates that 
letter $a$ cannot occur before $w_j$ in $w$, a contradiction. 
Thus, $w_j$ corresponds to $\ell_c$. 
Since $w_i$ occurs before $w_j$ in $w$, we have $\ell_c < r_a$. 
Thus, $r_a = a < c = r_c$ indicates that $I_c$ intersects $I_a$, that is, $ac \notin E(G)$. 
On the other hand, $w_k$ corresponds to either $\ell_b$ or $r_b$. 
If $w_k$ corresponds to $r_b$, then $\ell_a < r_a = a < b = r_b$ indicates that 
letter $a$ cannot occur before $w_k$ in $w$, a contradiction. 
Thus, $w_k$ corresponds to $\ell_b$. 
Now, $j < k$ indicates $\ell_b < \ell_c$. 
If $\ell'_b = \ell'_c$ then $b < c$ implies $\ell_b > \ell_c$, a contradiction. 
(For example, in Figure~\ref{fig:co-X58}, $\ell'_3 = \ell'_5= 2$ and $\ell_3 > \ell_5$.) 
Thus, $\ell'_b < \ell'_c$; that is, 
there is a vertex $d$ such that $\ell'_b = d < \ell'_c$. 
Then, $d \in N(c) \setminus N(b)$ and $d = r_d < \ell_c < r_a = a$. 
Therefore, 
the vertices $d, a, b, c$ form a pattern in Figure~\ref{fig:fp132}\subref{fig:fp132-2}, a contradiction. 
Thus, $w$ is $132$-avoiding. 
\end{proof}

Lemmas~\ref{lem:132_1} and~\ref{lem:132_2} yield the main theorem of this section. 
\begin{theorem}\label{thm:132}
A labeled graph is $12$-representable by a $132$-avoiding word if and only if 
it does not contain any pattern in Figure~\ref{fig:fp132}. 
\end{theorem}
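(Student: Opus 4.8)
The plan is to establish Theorem~\ref{thm:132} as an immediate consequence of the two lemmas that precede it, treating it as a clean biconditional whose two directions have already been isolated.

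First I would handle the \emph{necessity} direction: if a labeled graph $G$ is $12$-representable by a $132$-avoiding word, then $G$ contains no pattern from Figure~\ref{fig:fp132}. This is the contrapositive of Lemma~\ref{lem:132_1}, which asserts that containing any pattern in Figure~\ref{fig:fp132} precludes $132$-avoiding $12$-representability. So the necessity is nothing more than reading Lemma~\ref{lem:132_1} in contrapositive form, and I would simply cite it.

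Next I would dispatch the \emph{sufficiency} direction: if $G$ contains no pattern in Figure~\ref{fig:fp132}, then $G$ is $12$-representable by a $132$-avoiding word. This is exactly the content of Lemma~\ref{lem:132_2}, whose proof constructs an explicit interval model of the complement $\overline{G}$ (using the index $\ell'_i$ defined there) and then reads off a $12$-representant $w$ by listing the interval endpoints from right to left, verifying both that $w$ $12$-represents $G$ and that $w$ avoids $132$. So sufficiency is an immediate appeal to Lemma~\ref{lem:132_2}.

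Since both directions are fully supplied by the two lemmas, the proof of Theorem~\ref{thm:132} is a one-line assembly, and there is no genuine obstacle remaining at this stage; all the technical work lives inside Lemma~\ref{lem:132_2}. The single point that warrants a moment's care is making sure the two lemmas are phrased against the same object, namely a \emph{labeled} graph together with its induced vertex ordering, so that ``contains a pattern in Figure~\ref{fig:fp132}'' matches in both statements; the theorem is stated for labeled graphs precisely so that this alignment is exact. Thus I would write simply that Lemma~\ref{lem:132_1} gives the necessity (in contrapositive form) and Lemma~\ref{lem:132_2} gives the sufficiency, which together establish the equivalence.
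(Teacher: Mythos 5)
Your proposal is correct and matches the paper exactly: the paper proves Theorem~\ref{thm:132} by directly combining Lemma~\ref{lem:132_1} (necessity, read in contrapositive) with Lemma~\ref{lem:132_2} (sufficiency), precisely the one-line assembly you describe.
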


Lemma~\ref{lem:132_2} and Theorem~\ref{thm:fp_m_int} indicate that 
the class of $132$-representable graphs is a subclass of co-interval graphs. 
\begin{corollary}\label{cor:132:co-interval}
A graph is a co-interval graph if it is $12$-representable by a $132$-avoiding word. 
\end{corollary}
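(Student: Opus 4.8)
The plan is to pass to the complement and invoke the interval-graph characterization of Theorem~\ref{thm:fp_m_int}. Assume $G$ is $12$-representable by a $132$-avoiding word. First I would apply Theorem~\ref{thm:132} to fix a labeling, equivalently a vertex ordering $\sigma$, under which $G$ contains none of the patterns in Figure~\ref{fig:fp132}; in particular $(G, \sigma)$ contains no copy of the pattern in Figure~\ref{fig:fp132}\subref{fig:fp132-1}.

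The crux is the observation that the three-vertex pattern in Figure~\ref{fig:fp132}\subref{fig:fp132-1} --- which for $1 \prec 2 \prec 3$ prescribes the non-edge $13$ and the edge $23$, with $12$ unconstrained --- is precisely the edge/non-edge complement of the interval forbidden pattern in Figure~\ref{fig:fp_simple}\subref{fig:fp_m_int}, which prescribes the edge $13$ and the non-edge $23$, again with $12$ unconstrained. Because taking the complement preserves the vertex ordering and merely interchanges edges with non-edges, an ordered triple of $(G, \sigma)$ induces Figure~\ref{fig:fp132}\subref{fig:fp132-1} if and only if the same triple of $(\overline{G}, \sigma)$ induces Figure~\ref{fig:fp_simple}\subref{fig:fp_m_int}. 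Hence $(\overline{G}, \sigma)$ avoids the pattern in Figure~\ref{fig:fp_simple}\subref{fig:fp_m_int}.

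By Theorem~\ref{thm:fp_m_int}, $\overline{G}$ is therefore an interval graph, so $G$ is the complement of an interval graph, i.e.\ a co-interval graph, which is the assertion. This is exactly the reduction carried out implicitly in the first line of the proof of Lemma~\ref{lem:132_2}, now read in the forward direction.

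Since Theorem~\ref{thm:fp_m_int} forbids a single pattern, only the pattern in Figure~\ref{fig:fp132}\subref{fig:fp132-1} is needed here; the remaining patterns of Figure~\ref{fig:fp132} play no role in the conclusion. I expect no genuine obstacle, as the argument is essentially bookkeeping; the only point requiring a moment's care is checking that the ``may or may not be present'' convention on the pair $12$ is consistent under complementation, so that forbidding Figure~\ref{fig:fp132}\subref{fig:fp132-1} in $(G, \sigma)$ loses nothing when translated into forbidding Figure~\ref{fig:fp_simple}\subref{fig:fp_m_int} in $(\overline{G}, \sigma)$.
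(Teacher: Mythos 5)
Your proof is correct and takes essentially the same route as the paper: combine Theorem~\ref{thm:132} (equivalently, Lemma~\ref{lem:132_1}) to get an ordering of $G$ avoiding the patterns of Figure~\ref{fig:fp132}, then observe that the pattern in Figure~\ref{fig:fp132}\subref{fig:fp132-1} is the edge/non-edge complement of the interval pattern in Figure~\ref{fig:fp_simple}\subref{fig:fp_m_int}, so Theorem~\ref{thm:fp_m_int} applied to $\overline{G}$ gives the conclusion. This is exactly the reduction the paper performs in the first line of the proof of Lemma~\ref{lem:132_2}, which is what the paper's derivation of the corollary relies on.
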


Furthermore, 
complements of trivially perfect graphs are 
a subclass of $132$-representable graphs. 
\begin{corollary}\label{prop:132 TPG}
The complements of trivially perfect graphs 
are $12$-representable by $123$-avoiding words. 
\end{corollary}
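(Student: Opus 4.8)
The plan is to deduce the statement from the characterization of $123$-representable graphs in Theorem~\ref{thm:123}. Since a graph is $12$-representable by a $123$-avoiding word exactly when it is the complement of an intersection graph of unit-length hooks and sticks (the equivalence (i)$\iff$(iii)), it suffices to show that \emph{every trivially perfect graph $T$ is itself the intersection graph of unit-length hooks and sticks}; its complement $\overline T$ is then $123$-representable. I would work in the point/flag reformulation underlying Lemmas~\ref{lem:123-2} and~\ref{lem:123-3} (see Figure~\ref{fig:hook model}): assign to each vertex $v$ a corner $p_v$ on $\mathcal L$ and record whether $v$ carries a rightward unit segment ($H_v=1$) and/or an upward unit segment ($V_v=1$), with at least one present. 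For $p_u<p_v$ the two objects meet iff $H_u=1$, $V_v=1$, and $p_v-p_u\le 1$, so adjacency is governed by the corner order, the two flags, and the unit threshold.

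First I would build such a model by induction along the standard decomposition of trivially perfect graphs. A single vertex is one hook. A disjoint union $T_1\sqcup T_2$ is modelled by translating a model of $T_2$ so far to the right of one for $T_1$ that every cross corner-distance exceeds $1$, which removes all cross edges. A connected $T$ is obtained from $T-z$, where $z$ is the universal vertex, by inserting $z$ as a central hook and placing the remaining corners within distance $1$ of $p_z$, with the vertices put to the left of $z$ carrying a rightward segment and those put to the right carrying an upward segment, so that $z$ becomes adjacent to all of them. Independent leaves hanging off $z$ can be separated from one another by \emph{flags} rather than distance (as sticks: a vertical stick has no right-neighbour, a horizontal stick no left-neighbour), exactly as in the star case, while the nontrivial blocks $G_1,\dots,G_m$ in $T-z=G_1\sqcup\cdots\sqcup G_m$ are kept mutually non-adjacent by corner-distance.

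The hard part will be this universal-vertex step under the \emph{fixed} unit threshold. Adding $z$ forces every present corner to lie within distance $1$ of $p_z$, so the current model must first be recompactified into a window short enough for $z$ to reach all of it; yet the disjoint-union step deliberately spreads distinct components more than $1$ apart to suppress their cross edges, and these two requirements pull in opposite directions as soon as $z$ sits above several edge-bearing blocks at once. The crux is therefore to reconcile reachability from $z$ with the simultaneous non-adjacency of the blocks $G_1,\dots,G_m$: the plan is to distribute the $G_i$ across the two sides of $z$ and, within each side, to use stick-flags to break adjacencies between blocks that are too close to separate by distance alone, while verifying that this flag assignment never forces an unwanted edge. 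Once a valid model is produced, Lemma~\ref{lem:123-3} yields the $123$-avoiding $12$-representant of $\overline T$, finishing the argument; as a by-product, Theorem~\ref{thm:fp_MPT} then exhibits $\overline T$ among the complements of MPT graphs.
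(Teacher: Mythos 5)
You have taken the statement at face value, but the ``$123$'' in it is a typo in the paper: this corollary sits in Section~\ref{sec:132}, carries the label \texttt{prop:132 TPG}, follows the sentence announcing that complements of trivially perfect graphs form a subclass of \emph{$132$}-representable graphs, and the paper's own proof is a two-line reversal argument with no geometry at all: if $G$ is trivially perfect, Theorem~\ref{thm:231} and Lemma~\ref{lem:231} give a $231$-avoiding permutation $\pi$ that $12$-represents $G$, and then the reverse $r(\pi)$ is $132$-avoiding and $12$-represents $\overline{G}$ (the same trick the paper uses to show complements of bipartite permutation graphs are $123$-representable via Lemma~\ref{lem:321:permutation}). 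Your plan instead reduces the literal ``$123$'' statement via Theorem~\ref{thm:123} to the claim that \emph{every trivially perfect graph is an intersection graph of unit-length hooks and sticks}.

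That reduction target is false, and the failure occurs exactly at the universal-vertex step you flagged as ``the hard part.'' Work in your own flag reformulation. If $y$ is universal with corner $p_y$, then every other corner lies in $[p_y-1,p_y)\cup(p_y,p_y+1]$, every vertex left of $y$ has $H=1$ forced and every vertex right of $y$ has $V=1$ forced. Consequently: (1) two left vertices are within distance $1$ automatically, so they are adjacent iff the later one has $V=1$; hence the later endpoint of any left edge is adjacent to \emph{all} earlier left vertices, which makes each side $2K_2$-free (indeed threshold); (2) a left--right pair is adjacent iff its corner distance is at most $1$. Now suppose $T-y$ has two components $G_1,G_2$ each containing a cross edge, say $xx'$ and $ww'$ with $x,w$ left and $x',w'$ right. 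The cross non-adjacencies between components force $p_{x'}\le p_x+1<p_{w'}\le p_w+1<p_{x'}$, a contradiction; so at most one component of $T-y$ may use cross edges, and any connected component without cross edges must lie entirely on one side and hence be $2K_2$-free. Take $T$ to be a universal vertex over two disjoint copies of ($K_1$ joined to two independent edges): this is trivially perfect, but both components of $T-y$ are connected and contain $2K_2$, so no model exists, and no assignment of stick-flags can rescue your construction. (Your small cases do work --- a single universal vertex over $2K_2$, or over two stars, is representable --- but the obstruction appears one nesting level deeper.) So the proposal cannot be completed; the statement you were given is true only in its intended ``$132$'' form, whose proof is the short permutation-reversal above and is unrelated to Theorem~\ref{thm:123}.
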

\begin{proof}
Let $G$ be a trivially perfect graph. 
Theorem~\ref{thm:231} indicates that $G$ is $231$-representable. 
By Lemma~\ref{lem:231}, 
there is a $231$-avoiding permutation $\pi$ representing $G$. 
The reverse $r(\pi)$ of $\pi$ is $132$-avoiding and 
$12$-representing the complement $\overline{G}$ of $G$. 
\end{proof}

\begin{remark}
The inclusion of Corollary~\ref{cor:132:co-interval} is proper 
because twin-house in Figure~\ref{fig:twin-house}\subref{fig:twin-house:graph} 
is a co-interval graph, whose model is illustrated in 
Figure~\ref{fig:twin-house}\subref{fig:twin-house:model}, 
but is not $12$-representable by $132$-avoiding words. 
To see this, we prove by contradiction that 
any labeling contains one of the forbidden patterns 
in Figure~\ref{fig:fp132}. 
\par
If the label $1$ is assigned to $a$, $b$, $d$, or $e$, 
then the pattern in Figure~\ref{fig:fp132}\subref{fig:fp132-1} occurs; 
for example, if $1$ is assigned to $a$, then the vertices $a, c, d$ form the pattern 
since the labels of $c$ and $d$ would be larger than $1$. 
Thus, the label $1$ must be assigned to $c$ or $f$. 
Without loss of generality, we assume that $1$ is assigned to $f$. 
\par
If the label $2$ is assigned to $a$ or $c$, 
then the pattern in Figure~\ref{fig:fp132}\subref{fig:fp132-1} occurs 
since the label of $b$ would be larger than $2$. 
If the label $2$ is assigned to $d$ or $e$, 
then the pattern in Figure~\ref{fig:fp132}\subref{fig:fp132-1} occurs 
since the labels of $a$ and $b$ would be larger than $2$. 
Thus, the label $2$ must be assigned to $b$. 
\par
If the label $3$ is assigned to $a$, 
then the vertices $a, c, d$ form the pattern in Figure~\ref{fig:fp132}\subref{fig:fp132-1}. 
If the label $3$ is assigned to $c$, 
then the vertices $b, c, d$ form the pattern in Figure~\ref{fig:fp132}\subref{fig:fp132-1}. 
Thus, the label $3$ must be assigned to $d$ or $e$. 
Without loss of generality, we assume that $3$ is assigned to $e$. 
\par
If the label $4$ is assigned to $a$, 
then the vertices $a, c, d$ form the pattern in Figure~\ref{fig:fp132}\subref{fig:fp132-1}. 
If the label $4$ is assigned to $c$, 
then the vertices $b, c, d$ form the pattern in Figure~\ref{fig:fp132}\subref{fig:fp132-1}. 
Thus, the label $4$ must be assigned to $d$. 
However, now, the vertices $b, e, d, a$ form the pattern in Figure~\ref{fig:fp132}\subref{fig:fp132-2}. 
\end{remark}

\begin{figure}[ht]
  \centering
  \subcaptionbox{\label{fig:twin-house:graph}}{\begin{tikzpicture}
\def\len{1.0}
\useasboundingbox (-0.8, -0.1) rectangle (1*\len+0.8, 2*\len+0.1);
\tikzstyle{every node}=[draw,circle,fill=white,minimum size=5pt,inner sep=1pt]
\node [label=left:$a$ ] (a) at (0, 0) {};
\node [label=right:$b$] (b) at (1*\len, 0) {};
\node [label=right:$c$] (c) at (1*\len, 1*\len) {};
\node [label=right:$d$] (d) at (1*\len, 2*\len) {};
\node [label=left:$e$ ] (e) at (0, 2*\len) {};
\node [label=left:$f$ ] (f) at (0, 1*\len) {};
\draw [] (a) -- (b) -- (c) -- (d) -- (f) -- (c) -- (e) -- (f) -- (a);
\end{tikzpicture}}
  \subcaptionbox{\label{fig:twin-house:model}}{\begin{tikzpicture}
\def\len{0.6}
\def\la{0}
\def\lb{-0.5}
\def\lc{-1.0}
\def\ld{-1.5}
\useasboundingbox (-1.5*\len-0.5, -1.7) rectangle (8.5*\len+0.5, 0.5);
\tikzstyle{every node}=[inner sep=2pt]
\node [label=above:$a$] (a) at (1.5*\len, \la) {};
\draw [very thick,{|-|}] ($(a) + (-1.5*\len, 0)$) -- ($(a) + (1.5*\len, 0)$);
\node [label=above:$b$] (b) at (5.5*\len, \la) {};
\draw [very thick,{|-|}] ($(b) + (-1.5*\len, 0)$) -- ($(b) + (1.5*\len, 0)$);
\node [label=above:$c$] (c) at (0.0*\len, \lc) {};
\draw [very thick,{|-|}] ($(c) + (-1.5*\len, 0)$) -- ($(c) + (1.5*\len, 0)$);
\node [label=above:$d$] (d) at (3.5*\len, \lb) {};
\draw [very thick,{|-|}] ($(d) + (-1.5*\len, 0)$) -- ($(d) + (1.5*\len, 0)$);
\node [label=above:$e$] (e) at (3.5*\len, \ld) {};
\draw [very thick,{|-|}] ($(e) + (-1.5*\len, 0)$) -- ($(e) + (1.5*\len, 0)$);
\node [label=above:$f$] (f) at (7.0*\len, \lc) {};
\draw [very thick,{|-|}] ($(f) + (-1.5*\len, 0)$) -- ($(f) + (1.5*\len, 0)$);
\end{tikzpicture}}
  \caption{
    \subref{fig:twin-house:graph} Twin-house. 
    \subref{fig:twin-house:model} An interval model of the complement of twin-house. 
    Twin-house is a co-interval graph not $12$-representable by $132$-avoiding words. }
  \label{fig:twin-house}
\end{figure}
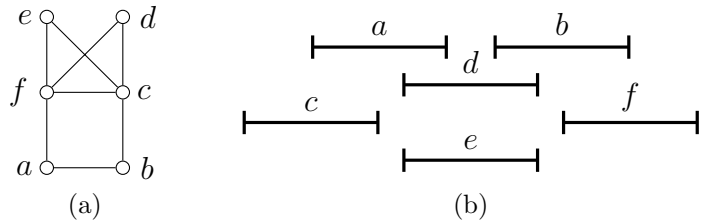

\section{$211$-representable graphs}\label{sec:211}
In this section, we show a forbidden pattern characterization 
and the related results for $211$-representable graphs. 
Apart from the forbidden patterns, 
we have a characterization of $211$-representable graphs in terms of their $12$-representants. 
\begin{theorem}\label{thm:211 word}
A labeled graph $G$ is $12$-representable by a $211$-avoiding word 
if and only if 
there is a $12$-representant $w$ of $G$ 
that can be split into two parts $w = s\pi$ such that 
$\pi$ is a permutation of all labels of $G$ and 
$s$ is the ascending order of labels occurring twice in $w$. 
\end{theorem}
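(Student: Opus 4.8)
The plan is to first convert $211$-avoidance into structural facts about a representant, then handle the two directions separately: the sufficiency is a direct pattern check, while the necessity requires rearranging a given representant into the prescribed shape. Throughout I use the standing assumption that every $12$-representant has each label occurring at most twice, so each label occurs exactly once or twice. Recall that a word contains the pattern $211$ precisely when there are positions $i_1<i_2<i_3$ with $w_{i_1}>w_{i_2}=w_{i_3}$, and that for $i<j$ the vertices are adjacent exactly when every $j$ occurs before every $i$.

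First I would record three consequences of $211$-avoidance for a representant $w$ of $G$ whose repeated letters are $a_1<\dots<a_k$. \emph{(i)} If a letter $a$ occurs twice, then every letter before its first occurrence is smaller than $a$; otherwise a larger letter would precede both copies of $a$ and create a $211$ pattern. \emph{(ii)} Consequently the first occurrences of $a_1,\dots,a_k$ appear from left to right in increasing order of value, since the first copy of a larger repeated letter preceding the first copy of a smaller repeated letter would precede both copies of the latter. \emph{(iii)} Each repeated letter $a$ is non-adjacent to every vertex $j>a$, because adjacency would force every copy of $j$ to occur before the first copy of $a$, contradicting \emph{(i)}.

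For the sufficiency I would argue directly. Suppose $w=s\pi$ with $s=a_1a_2\cdots a_k$ ascending and $\pi$ a permutation of all labels, so each $a_m$ occurs once in $s$ and once in $\pi$, while every other label occurs exactly once, in $\pi$. A hypothetical $211$ pattern would use a repeated value $a_m$ as its two equal letters, and both copies of $a_m$ would have to lie to the right of the larger letter; this forces that larger letter strictly to the left of the copy of $a_m$ inside $s$. But everything to the left of $a_m$ in the ascending block $s$ is smaller than $a_m$, a contradiction. Hence $w$ is $211$-avoiding and $G$ is $211$-representable.

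For the necessity I would start from a $211$-avoiding representant $w$ and transform it into the required form by moving the first occurrences of the repeated letters to the front. Concretely, I process the repeated letters in decreasing order $a_k,a_{k-1},\dots,a_1$, and at each step delete the current first copy of $a_m$ and reinsert it at the very beginning; after all $k$ steps the prefix reads $a_1a_2\cdots a_k=s$ and the suffix $\pi$ consists of the remaining occurrences, namely each label exactly once, so $\pi$ is a permutation of all labels. The heart of the argument, and the main obstacle, is to verify that each move leaves the represented graph unchanged. Moving one copy of $a_m$ alters the relative order of no pair of copies other than those involving $a_m$, so only pairs $\{i,a_m\}$ need attention: for $i<a_m$ adjacency is governed by whether any $i$ precedes the \emph{later} copy of $a_m$, which the move does not disturb; for $j>a_m$ the two vertices are non-adjacent both before and after the move, \emph{provided} no $j>a_m$ has all its copies before the current first copy of $a_m$. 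The delicate point is that the intermediate words are no longer $211$-avoiding (a larger letter now sits at the front), so this proviso cannot be read off the current word. Instead I would appeal to facts \emph{(i)}--\emph{(iii)} for the \emph{original} $w$ together with the increasing order \emph{(ii)}: every larger single letter lies to the right of the first copy of $a_m$ by \emph{(i)}, and the second copy of every larger repeated letter lies to its right by \emph{(ii)}, so no larger letter has all its copies before the first copy of $a_m$. This keeps every such pair non-adjacent at each step, the graph is preserved throughout, and the resulting word $s\pi$ is the desired representant.
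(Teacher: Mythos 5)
Your proposal is correct, and its core mechanism is the same as the paper's: starting from a $211$-avoiding representant, move the first copies of the repeated letters to the front one at a time, checking that each move preserves the represented graph (pairs with smaller letters are governed by the untouched second copy; pairs with larger letters stay non-adjacent). The difference is the processing order, and it changes the bookkeeping substantially. The paper processes the repeated letters in \emph{increasing} order $i_1 < i_2 < \cdots < i_k$, inserting each first copy just after the previously placed ones; with this order every intermediate word remains $211$-avoiding (the newly fronted letter is the smallest repeated one, so it cannot serve as the ``2'' of a new pattern), and the positional fact needed at each step --- no letter larger than $i_m$ precedes its first copy --- can be read off the \emph{current} word. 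You process in \emph{decreasing} order, which, as you correctly note, destroys $211$-avoidance of the intermediate words; you compensate by proving the global facts (i)--(ii) about the original word (all letters preceding the first copy of a repeated letter are smaller; first copies of repeated letters appear in increasing order) and carrying them through the induction. Both arguments are valid; the paper's choice of order buys a self-sustaining invariant and a shorter verification, while yours requires the extra lemmas but makes explicit exactly which structural properties of the original word are doing the work.
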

\begin{proof}
The sufficiency is obvious, and we will prove the necessity. 
Let $w'$ be a $211$-avoiding representant of $G$. 
Let $i_1 < i_2 < \cdots < i_k$ be the letters occurring twice in $w'$. 
Let $w' = w_1i_1w_2i_1w_3$, where $w_1$, $w_2$, and $w_3$ are subwords of $w'$. 
No letters in $w_1$ are larger than $i_1$; 
otherwise, $w'$ would contain the pattern $211$. 
Thus, the word $w'' = i_1w_1w_2i_1w_3$ is a $12$-representant of $G$. 
The word $w''$ is still $211$-avoiding because $i_1$ is the smallest 
among the letters occurring twice in $w'$. 
Similarly, let $w'' = i_1w_1i_2w_2i_2w_3$, 
where $w_1$, $w_2$, and $w_3$ are subwords of $w''$. 
Then, the word $w''' = i_1i_2w_1w_2i_2w_3$ is again a $12$-representant of $G$ and $211$-avoiding. 
Performing the same procedure for each $i_3, \ldots, i_k$ yields a required word $w$. 
\end{proof}

Now, we show the forbidden patterns for $211$-representable graphs. 
\begin{lemma}\label{lem:211_1}
If a labeled graph contains a pattern in Figure~\ref{fig:fp211}, 
then it is not $12$-representable by a $211$-avoiding word. 
\end{lemma}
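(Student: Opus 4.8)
The plan is to follow the same strategy used for Lemmas~\ref{lem:123-1} and~\ref{lem:132_1}. By Proposition~\ref{prop:labeled induced subgraph} the class of $211$-representable graphs is hereditary, so it suffices to prove that each individual pattern in Figure~\ref{fig:fp211}, viewed as a labeled graph on its own, is not $12$-representable by a $211$-avoiding word. For any pattern already excluded by Theorem~\ref{thm:fp} (i.e.\ not $12$-representable at all) there is nothing further to show; the real work lies with the remaining patterns.

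For each such pattern I would first record the positional meaning of the letters in a hypothetical $12$-representant $w$. Recall that for $i<j$ the edge $ij$ forces every copy of $j$ to precede every copy of $i$, while the non-edge $ij$ forces some copy of $i$ to lie before some copy of $j$, equivalently the leftmost $i$ to precede the rightmost $j$. Applying these two rules to the prescribed edges and non-edges of the pattern pins down the relative order of the relevant occurrences. By Proposition~\ref{prop:labeled induced subgraph} we may assume each letter occurs at most twice, and by Theorem~\ref{thm:211 word} the word $w$ avoids $211$ exactly when, for every letter occurring twice, no larger letter appears before its first occurrence.

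The heart of the argument, and the point where the $211$ case genuinely differs from the $123$ and $132$ cases, is that $211$ requires two of its three positions to be filled by the \emph{same} vertex. In Lemmas~\ref{lem:123-1} and~\ref{lem:132_1} the forbidding subwords ($1324$, $2314$, $2413$, and so on) used four distinct vertices, so it sufficed to exhibit three distinct letters in a bad order. Here I must instead show that the adjacency constraints of the pattern force a specific vertex $x$ to occur \emph{twice} in $w$, and then locate a strictly larger vertex before the first copy of $x$, producing the subword $y\,x\,x$ with $y>x$. Forcing the repetition is the main obstacle: I expect to argue that $x$ has a non-neighbour among the smaller-labelled vertices (pushing some copy of $x$ to the right) together with a non-neighbour among the larger-labelled vertices (pushing some copy of $x$ to the left), so that, combined with the edges of the pattern, a single occurrence of $x$ cannot satisfy all the constraints; the two forced copies then straddle a larger vertex, yielding $211$.

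Concretely, for a given pattern I would name the vertices in label order, translate each edge and non-edge as above, and carry the relative-order deductions until the leftmost copy of the repeated vertex is seen to be preceded by a larger letter. I would treat one representative pattern in full detail and then remark that the remaining patterns follow by symmetric arguments, mirroring the closing sentence used at the end of the proof of Lemma~\ref{lem:123-1}.
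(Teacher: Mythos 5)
Your proposal is correct and follows essentially the same route as the paper: a hereditary reduction to the individual patterns, Theorem~\ref{thm:fp} to dispose of the pattern in Figure~\ref{fig:fp211}\subref{fig:fp211-1}, and for the remaining two patterns exactly the mechanism you describe --- the umbrella $a<b<c$ (non-edges $ab$, $bc$, edge $ac$) forces two distinct copies of $b$ (the paper derives the subword $bcab$), and the edge from $b$ to its larger neighbour $d$ then forces every $d$ before both copies, producing $211$; the paper merely treats both patterns simultaneously via one four-vertex graph with $b<d$ left unconstrained relative to $c$, which is the unified form of your ``one representative pattern'' plan. The only slip is your closing clause ``the two forced copies then straddle a larger vertex, yielding $211$'' --- a larger letter \emph{between} two equal copies gives the pattern $121$, not $211$ --- but this does not damage the argument, since the correct mechanism (a strictly larger letter \emph{before} the first copy, giving $y\,x\,x$) is precisely what you stated in the preceding sentence.
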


\begin{figure}[ht]
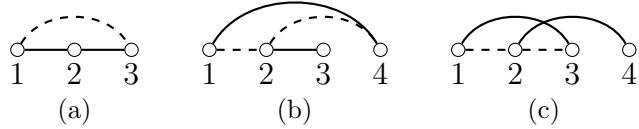

  \centering
  \subcaptionbox{\label{fig:fp211-1}}{\begin{tikzpicture}
\input{./fig/pattern/template_fp3}
\draw [thick] (x) -- (y);
\draw [thick] (y) -- (z);
\draw [thick,dashed] (x) to [out=60, in=120] (z);
\end{tikzpicture}}
  \subcaptionbox{\label{fig:fp211-2}}{\begin{tikzpicture}
\input{./fig/pattern/template_fp4}
\draw [thick,dashed] (x) to (y);
\draw [thick] (x) to [out=60, in=120] (w);
\draw [thick,dashed] (y) to [out=60, in=120] (w);
\draw [thick] (y) to (z);
\end{tikzpicture}}
  \subcaptionbox{\label{fig:fp211-3}}{\begin{tikzpicture}
\input{./fig/pattern/template_fp4}
\draw [thick,dashed] (x) to (y);
\draw [thick] (x) to [out=60, in=120] (z);
\draw [thick,dashed] (y) to (z);
\draw [thick] (y) to [out=60, in=120] (w);
\end{tikzpicture}}
  \caption{Forbidden patterns of $211$-representable graphs. }
  \label{fig:fp211}
\end{figure}

Before proving Lemma~\ref{lem:211_1}, we define a notion. 
\begin{definition}
Let $G$ be a labeled graph. 
An \emph{umbrella} is a triple of vertices $a < b < c$ 
such that $ab, bc \notin E(G)$ and $ac \in E(G)$. 
We refer to a vertex $v$ of $G$ as a \emph{$b$-vertex} if 
there exist two vertices $a$ and $c$ such that 
$a, v, c$ form an umbrella. 
\end{definition}

It is straightforward to see that the following proposition holds. 
\begin{proposition}\label{prop:b-vertices}
A labeled graph $G$ contains no pattern 
in Figures~\ref{fig:fp211}\subref{fig:fp211-2} and ~\ref{fig:fp211}\subref{fig:fp211-3} 
if and only if 
for any $b$-vertex $v$ of $G$, 
there is no vertex $d$ such that $v < d$ and $vd \in E(G)$. 
\end{proposition}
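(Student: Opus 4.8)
The plan is to prove the biconditional by exploiting a single structural observation: in each of the two patterns, written with its four vertices in order as $x<y<z<w$, the second-smallest vertex $y$ is the middle vertex of an umbrella \emph{and} is adjacent to a strictly larger vertex. So first I would record the patterns explicitly. Pattern \ref{fig:fp211}\subref{fig:fp211-2} is the configuration $xy,yw\notin E(G)$ and $xw,yz\in E(G)$; pattern \ref{fig:fp211}\subref{fig:fp211-3} is $xy,yz\notin E(G)$ and $xz,yw\in E(G)$. In the first, $x<y<w$ is an umbrella with $y$ in the middle and $yz\in E(G)$ is an upward edge at $y$; in the second, $x<y<z$ is an umbrella with $y$ in the middle and $yw\in E(G)$ is an upward edge at $y$.

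The backward direction ($\Leftarrow$) I expect to be essentially immediate by contraposition. If $G$ contains pattern \ref{fig:fp211}\subref{fig:fp211-2}, then the triple $x<y<w$ is an umbrella, so $y$ is a $b$-vertex, and $y<z$ with $yz\in E(G)$ exhibits exactly a forbidden upward edge; the case of pattern \ref{fig:fp211}\subref{fig:fp211-3} is identical using the umbrella $x<y<z$ and the edge $yw$. Either way the right-hand condition fails, which is what is required.

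For the forward direction ($\Rightarrow$) I would again argue contrapositively: assuming some $b$-vertex $v$ has a vertex $d$ with $v<d$ and $vd\in E(G)$, I would produce one of the two patterns. By definition of $b$-vertex there are $a<v<c$ with $av,vc\notin E(G)$ and $ac\in E(G)$. Since $vc\notin E(G)$ while $vd\in E(G)$, we have $d\neq c$, so I split on the position of $d$ relative to $c$. If $v<d<c$, then $a<v<d<c$ realizes pattern \ref{fig:fp211}\subref{fig:fp211-2}, as checked by verifying $av\notin E$, $ac\in E$, $vc\notin E$, $vd\in E$ against its edge set. If $c<d$, then $a<v<c<d$ realizes pattern \ref{fig:fp211}\subref{fig:fp211-3}, verified by the same four conditions against \emph{its} edge set.

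The only real obstacle, and it is a bookkeeping one, is that containment is of \emph{ordered} induced subgraphs, so I must confirm that the two placements of $d$ reproduce the patterns with the correct linear order and not merely an unlabeled copy. This reduces to matching the four adjacency/non-adjacency relations of the quadruple to the edge sets of \ref{fig:fp211}\subref{fig:fp211-2} and \ref{fig:fp211}\subref{fig:fp211-3}, which is routine once the patterns have been spelled out as above; the resulting proof is short.
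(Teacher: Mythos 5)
Your proof is correct. The paper itself gives no proof of this proposition (it is stated as ``straightforward to see''), and your argument is exactly the routine verification implicitly intended: each pattern, read off as $xy,yw\notin E$, $xw,yz\in E$ (for Figure~\ref{fig:fp211}\subref{fig:fp211-2}) and $xy,yz\notin E$, $xz,yw\in E$ (for Figure~\ref{fig:fp211}\subref{fig:fp211-3}), is precisely an umbrella centered at its second vertex together with an upward edge, and conversely the case split on whether $d$ lies below or above $c$ recovers the two patterns, with the unspecified edges of the patterns making the induced-ordered-subgraph check trivial.
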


Now, we are ready to prove Lemma~\ref{lem:211_1}. 
\begin{proof}[Proof of Lemma~\ref{lem:211_1}]
By Proposition~\ref{prop:labeled induced subgraph}, 
it suffices to show that any pattern in Figure~\ref{fig:fp211} 
is not $12$-representable by a $211$-avoiding word. 
Theorem~\ref{thm:fp} implies that 
the pattern in Figure~\ref{fig:fp211}\subref{fig:fp211-1} is not $12$-representable. 
\par
We use Proposition~\ref{prop:b-vertices}. 
Let $H$ be a graph consisting of four vertices $a, b, c, d$ 
such that $a < b < c$ and $b < d$ 
together with $ab, bc \notin E(H)$ and $ac, bd \in E(H)$. 
Suppose that there is a $12$-representant $w$ of $H$. 
Since $ac \in E(H)$, 
the rightmost $c$ occurs before the leftmost $a$ in $w$. 
Since $ab \notin E(H)$, some $b$ occurs after the leftmost $a$. 
Since $bc \notin E(H)$, some $b$ occurs before the rightmost $c$. 
Thus, the pattern $bcab$ must occur in $w$. 
Since $bd \in E(H)$, every $d$ occurs before the leftmost $b$. 
Thus, $d$ and $b$ form the pattern $211$. 
Therefore, the patterns 
in Figures~\ref{fig:fp211}\subref{fig:fp211-2} and ~\ref{fig:fp211}\subref{fig:fp211-3} 
are not $12$-representable by $123$-avoiding words. 
\end{proof}

\begin{lemma}\label{lem:211_2}
If a labeled graph does not contain any pattern in Figure~\ref{fig:fp211}, 
then it is $12$-representable by a $211$-avoiding word. 
\end{lemma}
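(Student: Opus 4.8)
The plan is to build a word of the shape supplied by Theorem~\ref{thm:211 word}: a prefix $s$ listing certain vertices in ascending order, followed by a permutation $\pi$ of \emph{all} vertices. First I would let $D$ be the set of all $b$-vertices of $G$; these will be exactly the doubled letters. By Proposition~\ref{prop:b-vertices}, since $G$ avoids the patterns in Figures~\ref{fig:fp211}\subref{fig:fp211-2} and~\ref{fig:fp211}\subref{fig:fp211-3}, every $v \in D$ satisfies $vd \notin E(G)$ for all $d > v$. A short computation on $w = s\pi$, where $s$ is the ascending list of $D$ (so each $v \in D$ occurs once in $s$ and once in $\pi$, while every other vertex occurs once in $\pi$), shows that the encoded adjacency is the following: for $i < j$ with $i \in D$ the pair is a non-edge, whereas for $i < j$ with $i \notin D$ the pair is an edge if and only if $j$ precedes $i$ in $\pi$. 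The forced non-edges for $i \in D$ agree with $G$ precisely because $b$-vertices have no larger neighbours, so it suffices to produce a permutation $\pi$ realizing the second condition.

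To obtain $\pi$, I would set up a forced orientation $H$ of the complete graph on $V(G)$, with the convention that an arc $x \to y$ means $x$ precedes $y$ in $\pi$: for a pair $i < j$ with $i \notin D$, orient $j \to i$ if $ij \in E(G)$ and $i \to j$ otherwise, leaving every pair with smaller endpoint in $D$ unoriented. Any topological order of $H$ then yields the desired $\pi$, so the heart of the argument is to prove that $H$ is acyclic.

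The hard part is exactly this acyclicity, which I would establish by examining a shortest directed cycle $C$. A minimality argument shows $C$ has no chord, hence any two non-consecutive vertices of $C$ span an unoriented pair, i.e.\ the smaller of the two lies in $D$. If $|C| = 3$, say on labels $a < b < c$, then all three pairs are oriented and so $a, b \notin D$; the two cyclic orientations of the triangle translate into either the umbrella $ab, bc \notin E(G)$ and $ac \in E(G)$ (which would make $b$ a $b$-vertex, forcing $b \in D$, a contradiction) or the configuration $ab, bc \in E(G)$ and $ac \notin E(G)$, which is the forbidden pattern in Figure~\ref{fig:fp211}\subref{fig:fp211-1}. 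If instead $|C| \ge 4$, let $m$ be the vertex of $C$ of smallest label. Its two cycle-neighbours are larger than $m$, and the incident arcs can exist only if $m \notin D$; but $|C| \ge 4$ provides a further vertex of $C$ non-consecutive to $m$, and the no-chord property then forces $m \in D$, again a contradiction. Hence $H$ is acyclic.

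Finally I would assemble and verify the construction: take $\pi$ to be a topological order of $H$ and $s$ the ascending listing of $D$, set $w = s\pi$, and check the four adjacency cases (according to whether each of $i$ and $j$ lies in $D$) to confirm that $w$ is a $12$-representant of $G$. Since the only repeated letters are those of $D$ and each such letter first occurs inside the ascending prefix $s$, no larger letter can precede a repeated letter, so $w$ is automatically $211$-avoiding. I expect the acyclicity of $H$ to be the only genuinely delicate step; the checking of the adjacencies and of $211$-avoidance is routine.
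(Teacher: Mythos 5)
Your proof is correct, and it takes a genuinely different route at the key technical step. Both you and the paper converge on the same word shape $w = s\pi$, with $s$ the ascending list of $b$-vertices and with Proposition~\ref{prop:b-vertices} guaranteeing that the non-edges forced by the prefix $s$ agree with $G$; the difference is in how $\pi$ is obtained. You define a partial orientation of the complete graph (orienting only pairs whose smaller endpoint is not a $b$-vertex, according to adjacency in $G$) and prove its acyclicity from scratch by a shortest-directed-cycle argument: minimality kills chords, the case $|C|=3$ yields either an umbrella (contradicting that consecutive pairs have non-$b$-vertex smaller endpoints) or the pattern of Figure~\ref{fig:fp211}\subref{fig:fp211-1}, and the case $|C|\geq 4$ is killed by the minimum-label vertex, which would have to be simultaneously in and out of the set of $b$-vertices. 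The paper instead builds a sequence of supergraphs $G_1 = G, G_2, \ldots, G_n$ by iteratively completing umbrellas, proves two inductive claims (no new $b$-vertices are created, and no pattern of Figure~\ref{fig:fp211}\subref{fig:fp211-1} is created), and then invokes Theorem~\ref{thm:permutation_labeled} to extract $\pi$ as a permutation representing $G_n$; in effect, the paper's tournament is a completion of your partial orientation, with acyclicity delegated to the known characterization of labeled permutation graphs rather than proved directly. Your version is more self-contained and avoids the induction over $n$ intermediate graphs; the paper's version reuses existing machinery and produces an explicit umbrella-free supergraph $G_n$, a clean combinatorial object that encodes which unoriented pairs get completed. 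Your case analysis is sound (I checked the translation of the two triangle orientations and the $|C|\geq 4$ argument), so the proposal stands as a valid alternative proof.
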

\begin{proof}
Let $G$ be a labeled graph containing no pattern in Figure~\ref{fig:fp211}. 
We first define a series of graphs $G_1 = G, G_2, \ldots, G_n$ recursively as follows. 
For each $i$ with $2 \leq i \leq n$, 
let $G_i$ be the graph obtained from $G_{i-1}$ by adding an edge $ij$ 
if there is an umbrella $k, i, j$ in $G_{i-1}$. 
The following claims imply that 
$G_n$ contains neither an umbrella nor 
the pattern in Figure~\ref{fig:fp211}\subref{fig:fp211-1}. 
\begin{claim}\label{claim:211_2_1}
No vertex of $G_{i-1}$ becomes a $b$-vertex in $G_i$. 
\end{claim}
\begin{proof}[Proof of Claim]
Suppose that a vertex $j$ is a $b$-vertex in $G_i$ but not in $G_{i-1}$. 
Then, there is a vertex $k$ with $i < j < k$ such that 
$ij, jk \notin E(G_i)$ and $ik \in E(G_i)$ but $ik \notin E(G_{i-1})$. 
Since edge $ik$ is added, there is a vertex $\ell < i$ 
such that ${\ell}i \notin E(G_{i-1})$ and ${\ell}k \in E(G_{i-1})$. 
Since $j$ is not a $b$-vertex in $G_{i-1}$ and $jk \notin E(G_{i-1})$, 
we have ${\ell}j \in E(G_{i-1})$. 
Now, the vertices ${\ell}, i, j$ form an umbrella in $G_{i-1}$ 
but $ij \notin E(G_i)$, a contradiction. 
\end{proof}
Therefore, every vertex less than $i$ is not a $b$-vertex in $G_{i-1}$. 
\begin{claim}\label{claim:211_2_2}
If $G_{i-1}$ contains no pattern in Figure~\ref{fig:fp211}\subref{fig:fp211-1}, then $G_i$ does. 
\end{claim}
\begin{proof}[Proof of Claim]
Suppose first that $G_i$ contains the pattern in Figure~\ref{fig:fp211}\subref{fig:fp211-1} 
consisting of vertices $i < j < k$. 
Since $G_{i-1}$ does not contain the pattern, 
$ik \notin E(G_i)$ and $ij, jk \in E(G_i)$ but $ij \notin E(G_{i-1})$. 
Since edge $ij$ is added, there is a vertex $\ell < i$ 
such that ${\ell}i \notin E(G_{i-1})$ and ${\ell}j \in E(G_{i-1})$. 
If ${\ell}k \in E(G_{i-1})$ then 
${\ell}, i, k$ form an umbrella in $G_{i-1}$ but $ik \notin E(G_i)$, 
a contradiction. 
If ${\ell}k \notin E(G_{i-1})$ then 
${\ell}, j, k$ form the pattern in $G_{i-1}$, a contradiction. 
\par
Suppose now that $G_i$ contains the pattern in Figure~\ref{fig:fp211}\subref{fig:fp211-1} 
consisting of vertices $j < i < k$. 
Since $G_{i-1}$ does not contain the pattern, 
$jk \notin E(G_i)$ and $ji, ik \in E(G_i)$ but $ik \notin E(G_{i-1})$. 
Since edge $ik$ is added, there is a vertex $\ell < i$ 
such that ${\ell}i \notin E(G_{i-1})$ and ${\ell}k \in E(G_{i-1})$. 
Suppose first ${\ell} < j$. 
If ${\ell}j \in E(G_{i-1})$ then 
${\ell}, j, i$ form the pattern in $G_{i-1}$, a contradiction. 
If ${\ell}j \notin E(G_{i-1})$ then 
${\ell}, j, k$ form an umbrella in $G_{i-1}$, 
which contradicts the above claim. 
Suppose finally $j < {\ell}$. 
If $j{\ell} \in E(G_{i-1})$ then 
$j, {\ell}, k$ form the pattern in $G_{i-1}$, a contradiction. 
If $j{\ell} \notin E(G_{i-1})$ then 
$j, {\ell}, i$ form an umbrella in $G_{i-1}$, 
which contradicts the above claim. 
\end{proof}
\par
Now, Theorem~\ref{thm:permutation_labeled} indicates that 
there is a permutation $\pi$ representing $G_n$. 
Let $s$ be the ascending sequence of labels of the $b$-vertices of $G$. 
Then, the word $w = s\pi$ is a $12$-representant of $G$ since 
Proposition~\ref{prop:b-vertices} indicates that for any $b$-vertex $i$, 
there is no vertex $j$ such that $i < j$ and $ij \in E(G)$. 
It is obvious that $w$ is $211$-avoiding. 
\end{proof}

Lemmas~\ref{lem:211_1} and~\ref{lem:211_2} yield the main theorem of this section. 
\begin{theorem}\label{thm:211}
A labeled graph is $12$-representable by a $211$-avoiding word if and only if 
it does not contain any pattern in Figure~\ref{fig:fp211}. 
\end{theorem}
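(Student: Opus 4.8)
The plan is to obtain this theorem immediately by combining the two lemmas established just before it, which together supply the two directions of the equivalence. First I would treat the necessity (the ``only if'' direction). Read in contrapositive form, Lemma~\ref{lem:211_1} states that if a labeled graph contains any pattern in Figure~\ref{fig:fp211}, then it has no $211$-avoiding $12$-representant; equivalently, if a labeled graph \emph{is} $12$-representable by a $211$-avoiding word, then it contains none of those patterns. So the ``only if'' direction is nothing more than Lemma~\ref{lem:211_1} with its implication reversed, and no additional argument is needed.

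For the sufficiency (the ``if'' direction), I would invoke Lemma~\ref{lem:211_2} verbatim: if a labeled graph does not contain any pattern in Figure~\ref{fig:fp211}, then it is $12$-representable by a $211$-avoiding word. Together, the contrapositive of Lemma~\ref{lem:211_1} and the statement of Lemma~\ref{lem:211_2} give both implications of the biconditional, so the equivalence claimed in the theorem follows at once. The only thing to check at this stage is that the forbidden set referenced in both lemmas is exactly the set in Figure~\ref{fig:fp211}, and that the adjacency/non-adjacency conventions used to define the patterns agree across the two statements, which they do by construction.

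Consequently I do not expect any genuine obstacle at the level of the theorem itself: all the substantive work has already been absorbed into the two lemmas. In particular, the delicate part of the whole argument is the sufficiency direction in Lemma~\ref{lem:211_2}, where the recursive family $G_1, G_2, \ldots, G_n$ is built by repeatedly adding edges forced by umbrellas, and where Claims~\ref{claim:211_2_1} and~\ref{claim:211_2_2} guarantee that this process introduces no new $b$-vertices and creates no copy of the pattern in Figure~\ref{fig:fp211}\subref{fig:fp211-1}; Proposition~\ref{prop:b-vertices} and Theorem~\ref{thm:permutation_labeled} then yield the required word $w = s\pi$. Since those facts are assumed proved, the theorem reduces to a one-line deduction, which I would simply state as: Lemmas~\ref{lem:211_1} and~\ref{lem:211_2} establish the necessity and sufficiency, respectively, of avoiding every pattern in Figure~\ref{fig:fp211}, whence the equivalence.
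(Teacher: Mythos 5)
Your proposal is correct and matches the paper exactly: the paper also derives Theorem~\ref{thm:211} immediately from Lemmas~\ref{lem:211_1} and~\ref{lem:211_2}, with the contrapositive of the first giving necessity and the second giving sufficiency. No further comment is needed.
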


Recall that an \emph{$L$-shape} is a union of a vertical and a horizontal line segment in which 
the bottom endpoint of the vertical line segment coincides with the left endpoint of the horizontal one. 
A \emph{grounded $L$-graph}~\cite{JT19-EJC} is the intersection graph of $L$-shapes 
whose top endpoints lie on a horizontal line. 
The forbidden pattern characterization is known for grounded $L$-graphs. 
\begin{theorem}[\cite{JT19-EJC}]\label{thm:fp_grounded_L}
A graph is a grounded $L$-graph if and only if 
it has a vertex ordering 
that does not contain any pattern in Figure~\ref{fig:fp_grounded_L}. 
\end{theorem}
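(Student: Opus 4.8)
The plan is to prove both directions of the equivalence by translating the geometry of a grounded $L$-representation into a purely combinatorial condition on a vertex ordering, and then reading the forbidden patterns off that condition. First I would fix the combinatorial skeleton of a representation: normalize so that the grounding line is horizontal at the top and each vertex $v$ is an $L$-shape hanging from a grounding point of abscissa $x_v$, with corner at depth $d_v$ below the line and horizontal arm reaching rightward to abscissa $R_v \ge x_v$. After a suitable perturbation we may assume all $x_v$ and all $d_v$ are distinct while preserving the graph. Ordering the vertices by grounding abscissa ($i \prec j$ iff $x_i < x_j$), a direct inspection of which segments can meet shows that for $i \prec j$ one has $ij \in E(G)$ if and only if $R_i \ge x_j$ and $d_i \le d_j$; that is, two vertices are adjacent exactly when the later one lies within the reach of the earlier one and the earlier one is the shallower of the two. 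This single equivalence, together with the two monotonicity facts that reach is downward closed to the right ($R_i \ge x_k$ with $i \prec j \prec k$ forces $R_i \ge x_j$) and that depth is a linear order, is the engine of the whole proof.

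For necessity I would take the ordering $\prec$ above and show that none of the patterns in Figure~\ref{fig:fp_grounded_L} can occur. Each pattern prescribes fixed adjacencies and non-adjacencies on three or four ordered vertices; substituting the adjacency equivalence turns the prescribed edges into lower bounds on reaches together with comparisons of depths, and turns the prescribed non-edges into the disjunction ``out of reach or wrong depth order''. Propagating the reach bounds with reach-monotonicity and chaining the depth comparisons with the linearity of depth, I expect each pattern to collapse to a contradiction of the form $d_a < d_b$ and $d_b < d_a$, or to a reach that is simultaneously required to cover and to miss the same grounding point. These are short, self-contained case checks, one per pattern.

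The substantive direction is sufficiency, and this is where the main obstacle lies. Given an ordering $\prec$ avoiding the patterns, I would build a representation by placing grounding points at $x_i = i$, setting $R_i = x_{m_i}$ where $m_i$ is the $\prec$-largest right neighbour of $i$ (and $R_i = x_i$ if $i$ has none), and assigning depths increasing along a yet-to-be-chosen linear order $\sigma$. With these choices the adjacency equivalence is automatically correct except on the depth comparisons, so what remains is to produce a $\sigma$ making every edge $i \prec j$ satisfy $i \prec_\sigma j$ and every in-reach non-edge (a non-adjacent pair $i \prec j$ with $j \preceq m_i$) satisfy $j \prec_\sigma i$. These demands define a digraph $D$ on $V(G)$, and a valid $\sigma$ is exactly a topological ordering of $D$; hence everything reduces to proving that $D$ is acyclic.

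The crux, and the hardest step, is to show that pattern-avoidance forbids cycles in $D$. I would argue that a shortest cycle of $D$ must involve only a bounded number of vertices, and then, after locating the $\prec$-extreme vertices along the cycle, exhibit among them precisely one of the ordered configurations of Figure~\ref{fig:fp_grounded_L}, contradicting the hypothesis; the delicate point is that an arc of $D$ points backward or forward in $\prec$ depending on whether it comes from an in-reach non-edge or from an edge, so the combinatorics of where the reach bounds $m_i$ sit has to be tracked carefully to force exactly a forbidden pattern rather than a merely ``large'' obstruction. Once $D$ is acyclic, any linear extension $\sigma$ supplies the depths, and a final routine verification against the adjacency equivalence completes the representation and the proof.
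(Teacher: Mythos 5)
First, a point of comparison: the paper does not prove this theorem at all --- it is quoted from~\cite{JT19-EJC} --- so your attempt can only be judged on its own completeness. Your setup is correct: normalizing so that vertex $v$ has grounding abscissa $x_v$, corner depth $d_v$, and reach $R_v$, one indeed gets, for $x_i < x_j$, that $ij \in E(G)$ if and only if $R_i \ge x_j$ and $d_i \le d_j$, and with this the necessity direction goes through exactly as you describe (each pattern collapses in a few lines to $d_a < d_b$ and $d_b \le d_a$). The reduction of sufficiency to acyclicity of your digraph $D$ --- forward arcs $i \to j$ for edges $i \prec j$, backward arcs $j \to i$ for non-edges with $i \prec j \prec m_i$ --- with the minimal reach $R_i = x_{m_i}$ is also a sound and natural framework.

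The genuine gap is that the acyclicity of $D$, which is the entire mathematical content of the hard direction, is only gestured at, and the sketch as stated would not go through. Two concrete problems. First, the forbidden configurations are in general \emph{not} found ``among'' the $\prec$-extreme cycle vertices: already for a $3$-cycle consisting of one forward arc $i \to j$ and two backward arcs $j \to k \to i$, the first pattern of Figure~\ref{fig:fp_grounded_L} is witnessed by $i, k, j$ together with the far neighbor $m_k$, which is not a cycle vertex; so the argument must track the vertices $m_v$ as well. Second, local shortcutting of a long cycle can fail: for two consecutive backward arcs $k \to j \to i$ (so $i \prec j \prec k$, $ij, jk \notin E$, $m_i \succ j$, $m_j \succ k$), the first pattern applied to $i, j, k, m_j$ does force $ik \notin E$, but the shortcut arc $k \to i$ additionally requires $m_i \succ k$, which can fail when $j \prec m_i \prec k$; in my checking, ruling out such configurations uses arcs of the cycle far away from the pair being shortcut (e.g., in a $4$-cycle with arc types forward--backward--backward--backward, the contradiction comes from $m_{v_1} \succeq v_2$, supplied by the unique forward arc, clashing with a bound $m_{v_1} \prec v_3$ extracted from the failed shortcuts). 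So the case analysis is genuinely global along the cycle, not the local ``bounded shortest cycle'' step you propose. The construction itself appears to be correct --- small cycles of every type can be excluded using the two patterns --- but until this acyclicity argument is actually carried out (or replaced by an inductive construction of the representation, as in~\cite{JT19-EJC}), what you have is the easy half of the theorem plus a correct reformulation of the hard half.
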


\begin{figure}[ht]
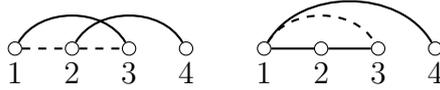

  \centering
  \begin{tikzpicture}
\input{./fig/pattern/template_fp4}
\draw [thick] (x) to [out=60, in=120] (z);
\draw [thick] (y) to [out=60, in=120] (w);
\draw [thick,dashed] (x) to (y);
\draw [thick,dashed] (y) to (z);
\end{tikzpicture}
  \begin{tikzpicture}
\input{./fig/pattern/template_fp4}
\draw [thick] (x) to [out=60, in=120] (w);
\draw [thick,dashed] (x) to [out=60, in=120] (z);
\draw [thick] (x) to (y);
\draw [thick] (y) to (z);
\end{tikzpicture}
  \caption{Forbidden patterns of grounded $L$-graphs. }
  \label{fig:fp_grounded_L}
\end{figure}

Theorems~\ref{thm:211} and~\ref{thm:fp_grounded_L} indicate that 
the class of $211$-representable graphs is a subclass of grounded $L$-graphs. 
\begin{corollary}\label{prop:211 permutation graph}
A graph is a grounded $L$-graph if it is $12$-representable by a $211$-avoiding word. 
\end{corollary}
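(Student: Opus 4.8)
The plan is to combine the forbidden pattern characterization of $211$-representable graphs (Theorem~\ref{thm:211}) with that of grounded $L$-graphs (Theorem~\ref{thm:fp_grounded_L}). Since both classes are characterized by forbidden patterns, it suffices to show a containment of vertex orderings: if a labeled graph admits a vertex ordering avoiding every pattern in Figure~\ref{fig:fp211}, then that same ordering avoids every pattern in Figure~\ref{fig:fp_grounded_L}. This reduces the corollary to a purely combinatorial comparison of the two pattern sets on a common vertex ordering.

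First I would take a $211$-representable graph $G$. By Theorem~\ref{thm:211}, $G$ has a vertex ordering containing no pattern in Figure~\ref{fig:fp211}. I would then argue that each of the two forbidden patterns of grounded $L$-graphs (Figure~\ref{fig:fp_grounded_L}) cannot occur in this ordering, because its occurrence would force the presence of one of the patterns in Figure~\ref{fig:fp211}. Concretely, I would inspect the edge/non-edge configuration on the four vertices $x \prec y \prec z \prec w$ (or three vertices, as appropriate) defining each grounded-$L$ pattern and check that this configuration, on those very vertices, already realizes one of Figures~\ref{fig:fp211}\subref{fig:fp211-1}, \subref{fig:fp211-2}, or \subref{fig:fp211-3} (recalling that in the pattern drawings, undrawn edges are unconstrained, so a grounded-$L$ pattern with more specified edges refines a $211$ pattern with fewer). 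Having ruled out both grounded-$L$ patterns in this ordering, Theorem~\ref{thm:fp_grounded_L} immediately yields that $G$ is a grounded $L$-graph.

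The main obstacle I anticipate is the bookkeeping of matching the patterns correctly: the grounded-$L$ patterns are on three or four vertices with a specific arrangement of solid (edge) and dashed (non-edge) links, and I must verify that each such arrangement, read on the same ordered vertex set, genuinely contains an induced copy of one of the $211$-patterns rather than merely resembling it. In particular I would need to be careful that the undrawn (``don't care'') edges in Figure~\ref{fig:fp211} are exactly the ones that are specified in Figure~\ref{fig:fp_grounded_L}, so that the more constrained grounded-$L$ pattern is a specialization of the less constrained $211$-pattern. Once this pattern-containment is checked case by case for both grounded-$L$ patterns, the corollary follows formally from the two forbidden pattern theorems with no further graph-theoretic work.
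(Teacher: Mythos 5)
Your proposal is correct and is exactly the paper's (implicit) argument: the corollary is stated there as a direct consequence of Theorems~\ref{thm:211} and~\ref{thm:fp_grounded_L}, i.e., any vertex ordering avoiding the patterns of Figure~\ref{fig:fp211} also avoids those of Figure~\ref{fig:fp_grounded_L}. The case check you defer does go through: the first pattern in Figure~\ref{fig:fp_grounded_L} is identical to the pattern in Figure~\ref{fig:fp211}\subref{fig:fp211-3}, and the second one contains the pattern in Figure~\ref{fig:fp211}\subref{fig:fp211-1} on its first three vertices.
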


\section{Concluding remarks}\label{sec:conclusion}
This paper investigates $p$-representable graphs 
(i.e., graphs $12$-representable by $p$-avoiding words) 
when the pattern $p$ is of length 3. 
We present forbidden pattern characterizations for most cases. 
Characterizing $112$-representable graphs 
(equivalently, $122$-representable graphs) 
remains an open problem. 

A natural next step is to study $p$-representable graphs for longer patterns. 
The following observation indicates that 
for any $k \geq 2$, 
there is a pattern $p$ of length $k$ such that 
the class of $p$-representable graphs is nontrivial. 
\begin{proposition}
No clique of size greater than or equal to $k \geq 2$ is not $p$-representable 
if $p = k (k-1) \cdots 1$. 
\end{proposition}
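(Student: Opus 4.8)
The plan is to prove the proposition by showing that, for $p = k(k-1)\cdots 1$, every clique on at least $k$ vertices fails to admit a $p$-avoiding $12$-representant; this exhibits a graph outside the class and so witnesses that the class of $p$-representable graphs is nontrivial for each $k \geq 2$. First I would fix the labelling: relabelling a clique again yields a clique, so it suffices to treat $K_m$ labelled by $\{1,2,\ldots,m\}$ with $m \geq k$. Let $w$ be any $12$-representant of this labelled clique. Since every pair $i < j$ is an edge, the defining condition of $12$-representation forces every copy of $j$ to precede every copy of $i$ in $w$; in particular the rightmost copy of $j$ precedes the leftmost copy of $i$ whenever $i < j$.

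Next I would convert these pairwise conditions into a global description of $w$. Chaining the inequalities ``rightmost $j$ before leftmost $i$'' across all $i < j$ shows that the occurrences of the letters appear in consecutive blocks arranged in strictly decreasing order: first all copies of $m$, then all copies of $m-1$, and so on down to all copies of $1$. Picking one occurrence from each block and reading them left to right yields a decreasing subsequence $m,(m-1),\ldots,1$ of length $m$ inside $w$.

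Because $m \geq k$, any $k$ of the entries of this decreasing subsequence form a decreasing subsequence of length $k$, whose reduced form is exactly $p = k(k-1)\cdots 1$. Hence $w$ contains the pattern $p$, so no $12$-representant of $K_m$ is $p$-avoiding, and $K_m$ is not $p$-representable. Since $p$-representability is hereditary (Proposition~\ref{prop:labeled induced subgraph}), every graph containing a clique on $k$ vertices is likewise excluded, which gives the stated conclusion.

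The only step requiring care is the passage from the pairwise ``rightmost before leftmost'' conditions to the block-decreasing shape of $w$; I expect this to follow directly from the chained inequalities, with no appeal to the multiplicity reduction of Theorem~\ref{thm:at most twice} needed. I therefore anticipate no substantive obstacle.
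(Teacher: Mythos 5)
Your argument is correct and matches the evidently intended one: the paper states this proposition without proof (as an observation), but its one-line proof of Lemma~\ref{lem:321:cycle} --- that a triangle forces the pattern $321$ in any $12$-representant --- is exactly the $k=3$ case of your block-structure argument, which you generalize soundly (any $12$-representant of a labelled clique must list all copies of $m$, then of $m-1$, and so on, hence contains the decreasing pattern of length $m \geq k$). Note only that the statement's double negation (``no clique \ldots is not $p$-representable'') is a typo for ``is not $p$-representable,'' and you correctly proved the intended reading.
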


More generally, it is a further research direction 
to study graphs $12$-representable by 
some specific types of words, 
as suggested in~\cite[page~183]{KL15-book}. 
One example is graphs defined by words that avoid a set of patterns. 
Let $\Gamma$ be a set of patterns such that $\red(p) = p$ for all $p \in \Gamma$. 
We say that a graph is \emph{$\Gamma$-representable} if 
it is $12$-representable by a word that avoids all patterns in $\Gamma$. 
For $\{121, 212\}$-representable graphs, the following can be obtained from Theorems~\ref{thm:121} and~\ref{thm:permutation}. 
\begin{theorem}
A graph is $12$-representable by a word that avoids all patterns in $\{121, 212\}$ 
if and only if it is a permutation graph. 
\end{theorem}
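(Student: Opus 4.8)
The plan is to sandwich the class of $\{121, 212\}$-representable graphs between permutation graphs from both sides, using Theorems~\ref{thm:121} and~\ref{thm:permutation} together with the elementary observation that a repetition-free word vacuously avoids any pattern that demands a repeated letter.

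First I would handle the forward implication, which is immediate. A word that avoids both $121$ and $212$ is in particular $121$-avoiding, so every $\{121, 212\}$-representable graph is $12$-representable by a $121$-avoiding word. Theorem~\ref{thm:121} then shows that such a graph is a permutation graph, finishing this direction without further work.

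For the converse, suppose $G$ is a permutation graph. By Theorem~\ref{thm:permutation}, $G$ is $12$-representable by a permutation $\pi$ on $[n]$. The key step is to observe that $\pi$ contains each letter exactly once, whereas each of the patterns $121$ and $212$ forces a letter to occupy two distinct positions: in both reduced forms the first and third symbols coincide, so any occurrence requires $w_{i_1} = w_{i_3}$ for some indices $i_1 < i_3$. Since $\pi$ has no repeated letters, it can contain neither pattern. Hence $\pi$ is a $\{121, 212\}$-avoiding $12$-representant of $G$, so $G$ is $\{121, 212\}$-representable.

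There is no genuine obstacle here; the only point meriting a sentence of care is the remark that a permutation automatically avoids every pattern with a repeated letter, which is exactly what makes the $12$-representant supplied by Theorem~\ref{thm:permutation} qualify as a $\{121, 212\}$-avoiding representant.
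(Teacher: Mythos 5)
Your proof is correct and takes essentially the same route as the paper, which states this theorem is obtained directly from Theorems~\ref{thm:121} and~\ref{thm:permutation}. Your two directions are exactly that combination: avoiding $\{121, 212\}$ in particular means $121$-avoiding, so Theorem~\ref{thm:121} gives a permutation graph, and conversely the permutation supplied by Theorem~\ref{thm:permutation} has no repeated letters and hence vacuously avoids both patterns, each of which requires its first and third letters to coincide.
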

Furthermore, the following holds for $\{211, 221\}$-representable graphs. 
\begin{theorem}
A graph is $12$-representable by a word that avoids all patterns in $\{211, 221\}$ 
if and only if it is a permutation graph. 
\end{theorem}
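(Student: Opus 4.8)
The plan is to route both directions through Theorem~\ref{thm:permutation}, which identifies permutation graphs with graphs $12$-representable by a permutation. Sufficiency is immediate: if $G$ is a permutation graph, Theorem~\ref{thm:permutation} supplies a permutation $\pi$ that $12$-represents $G$; since $\pi$ contains each label exactly once it has no repeated letter, and as both $211$ and $221$ require two equal values, $\pi$ avoids both patterns. Hence $G$ is $12$-representable by a $\{211,221\}$-avoiding word.

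For necessity I would start from a word $w$ avoiding $211$ and $221$ that $12$-represents $G$, and first reduce to the case where each letter occurs at most twice, by the same deletion argument used repeatedly above (removing surplus occurrences of a letter can never create a new occurrence of any pattern, so the reduced word still avoids both). The crux is then the claim that \emph{every letter occurring twice labels an isolated vertex}. To prove it, write $w = w_1\, i\, w_2\, i\, w_3$ for a doubled letter $i$, where $w_1$ and $w_3$ contain no copy of $i$. Avoidance of $211$ forces every letter of $w_1$ to be smaller than $i$ (a larger letter before both copies of $i$ would give $211$), and avoidance of $221$ forces every letter of $w_3$ to be larger than $i$ (a smaller letter after both copies of $i$ would give $221$). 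Consequently, for any $k < i$ all copies of $k$ lie in $w_1 w_2$, hence before the second $i$, so the condition ``every $i$ before every $k$'' fails and $ki \notin E(G)$; symmetrically, for any $k > i$ all copies of $k$ lie in $w_2 w_3$, hence after the first $i$, so $ik \notin E(G)$. Thus $i$ is adjacent to nothing.

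Granting the claim, let $S$ be the set of labels occurring exactly once. The restriction $w_S$ is a permutation of $S$ and, as an induced restriction, $12$-represents $G[S]$; Theorem~\ref{thm:permutation} then makes $G[S]$ a permutation graph. Because every doubled letter is isolated, $G$ is the disjoint union of $G[S]$ with the isolated vertices of $V(G)\setminus S$, and adjoining isolated vertices to a permutation graph again yields a permutation graph (append them as the largest values at the end of the representing permutation, where they create no inversion). Therefore $G$ is a permutation graph, completing the equivalence.

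I expect the two pattern-forcing observations and the closure of permutation graphs under adding isolated vertices to be routine. The step that genuinely carries the content — and hence the main obstacle to organize cleanly — is the claim that a doubled letter must be isolated: by Theorem~\ref{thm:211}, $211$-avoidance alone allows doubled, non-isolated letters, so it is precisely the \emph{simultaneous} use of $221$-avoidance (controlling $w_3$) together with $211$-avoidance (controlling $w_1$) that collapses the class down to permutation graphs.
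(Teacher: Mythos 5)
Your proof is correct and takes essentially the same approach as the paper: both directions route through Theorem~\ref{thm:permutation}, and your key claim---that a doubled letter $i$ must be isolated because $211$-avoidance forces every letter of $w_1$ below $i$ while $221$-avoidance forces every letter of $w_3$ above $i$---is exactly the paper's central argument. The only immaterial difference is the final bookkeeping: the paper iteratively relabels each doubled letter as $n+1$ and appends it to the end of the word to produce a single permutation representing $G$, whereas you restrict to the singly-occurring letters and re-attach the isolated vertices via closure of permutation graphs under adding isolated vertices.
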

\begin{proof}
The sufficiency is obvious from Theorem~\ref{thm:permutation}, and we will prove the necessity. 
Let $G$ be a graph and 
$w$ be a $12$-representant of $G$ that avoids the patterns $211$ and $221$. 
Suppose that a letter $i$ occurs twice in $w$. 
Let $w = w_1 i w_2 i w_3$, where $w_1$, $w_2$, and $w_3$ are subwords of $w$. 
Since $w$ is $211$-avoiding, no letters in $w_1$ are larger than $i$. 
Recall that $w$ contains at least one copy of each letter in $[n]$. 
Thus, all letters larger than $i$ occur after the first copy of $i$. 
Similarly, 
since $w$ is $221$-avoiding, no letters in $w_3$ are less than $i$. 
Thus, all letters less than $i$ occur before the second copy of $i$. 
Therefore, the vertex labeled with $i$ is isolated in $G$. 
We relabel $G$ so that label $i$ is replaced by $n+1$. 
The word $w' = w_1w_2w_3(n+1)$ is a $12$-representant of the relabeled graph 
and still avoids the patterns $211$ and $221$. 
Performing the same procedure with all other letters occurring twice in $w'$ 
yields a permutation representing $G$. 
\end{proof}

Another research direction is the recognition. 
The complexity of recognition problems has been known for the classes 
we dealt with in Section~\ref{sec:simple cases}: 
simple-triangle graphs can be recognized 
in polynomial time~\cite{Mertzios15-SIAMDM,Takaoka20a-DAM,Takaoka20-DAM}; 
permutation graphs, 
trivially perfect graphs, and 
bipartite permutation graphs 
can be recognized in linear time~\cite{FH21-SIDMA}. 
The cases of 
$123$-representable graphs, 
$132$-representable graphs, and 
$211$-representable graphs 
are left as open problems.

\section*{Acknowledgments}
This work was supported by JSPS KAKENHI Grant Number JP23K03191. 

\bibliographystyle{abbrvurl}
\bibliography{ref}

\begin{thebibliography}{10}

\bibitem{BW99-DM}
K.~P. Bogart and D.~B. West.
\newblock A short proof that 'proper = unit'.
\newblock {\em Discrete Math.}, 201(1-3):21--23, 1999.
\newblock \href {http://dx.doi.org/10.1016/S0012-365X(98)00310-0}
  {\path{doi:10.1016/S0012-365X(98)00310-0}}.

\bibitem{BB23-DAM}
F.~Bonomo{-}Braberman and G.~A. Brito.
\newblock Intersection models and forbidden pattern characterizations for
  2-thin and proper 2-thin graphs.
\newblock {\em Discret. Appl. Math.}, 339:53--77, 2023.
\newblock \href {http://dx.doi.org/10.1016/j.dam.2023.06.013}
  {\path{doi:10.1016/j.dam.2023.06.013}}.

\bibitem{BLS99}
A.~Brandst\"{a}dt, V.~B. Le, and J.~P. Spinrad.
\newblock {\em Graph Classes: A Survey}.
\newblock {SIAM}, Philadelphia, PA, USA, 1999.
\newblock \href {http://dx.doi.org/10.1137/1.9780898719796}
  {\path{doi:10.1137/1.9780898719796}}.

\bibitem{CCFHHHS17-DAM}
D.~Catanzaro, S.~Chaplick, S.~Felsner, B.~V. Halld{\'{o}}rsson, M.~M.
  Halld{\'{o}}rsson, T.~Hixon, and J.~Stacho.
\newblock Max point-tolerance graphs.
\newblock {\em Discret. Appl. Math.}, 216:84--97, 2017.
\newblock \href {http://dx.doi.org/10.1016/j.dam.2015.08.019}
  {\path{doi:10.1016/j.dam.2015.08.019}}.

\bibitem{CK22-DMGT}
J.~Chen and S.~Kitaev.
\newblock On the 12-representability of induced subgraphs of a grid graph.
\newblock {\em Discuss. Math. Graph Theory}, 42(2):383--403, 2022.
\newblock \href {http://dx.doi.org/10.7151/dmgt.2263}
  {\path{doi:10.7151/dmgt.2263}}.

\bibitem{CK87-CN}
D.~G. Corneil and P.~A. Kamula.
\newblock Extensions of permutation and interval graphs.
\newblock {\em Congr. Numer.}, 58:267--275, 1987.

\bibitem{Damaschke90-incollection}
P.~Damaschke.
\newblock Forbidden ordered subgraphs.
\newblock In R.~Bodendiek and R.~Henn, editors, {\em Topics in Combinatorics
  and Graph Theory: Essays in Honour of Gerhard Ringel}, pages 219--229.
  Physica-Verlag HD, Heidelberg, 1990.
\newblock \href {http://dx.doi.org/10.1007/978-3-642-46908-4_25}
  {\path{doi:10.1007/978-3-642-46908-4_25}}.

\bibitem{FHH99-Combinatorica}
T.~Feder, P.~Hell, and J.~Huang.
\newblock List homomorphisms and circular arc graphs.
\newblock {\em Combinatorica}, 19(4):487--505, 1999.
\newblock \href {http://dx.doi.org/10.1007/s004939970003}
  {\path{doi:10.1007/s004939970003}}.

\bibitem{FHHR12-DAM}
T.~Feder, P.~Hell, J.~Huang, and A.~Rafiey.
\newblock Interval graphs, adjusted interval digraphs, and reflexive list
  homomorphisms.
\newblock {\em Discrete Appl. Math.}, 160(6):697--707, 2012.
\newblock \href {http://dx.doi.org/10.1016/j.dam.2011.04.016}
  {\path{doi:10.1016/j.dam.2011.04.016}}.

\bibitem{FH21-SIDMA}
L.~Feuilloley and M.~Habib.
\newblock Graph classes and forbidden patterns on three vertices.
\newblock {\em {SIAM} J. Discret. Math.}, 35(1):55--90, 2021.
\newblock \href {http://dx.doi.org/10.1137/19M1280399}
  {\path{doi:10.1137/19M1280399}}.

\bibitem{GKZ17-AJC}
A.~L.~L. Gao, S.~Kitaev, and P.~B. Zhang.
\newblock On 132-representable graphs.
\newblock {\em Australas. J. Combin.}, 69:105--118, 2017.
\newblock URL: \url{http://ajc.maths.uq.edu.au/pdf/69/ajc\_v69\_p105.pdf}.

\bibitem{Golumbic04}
M.~C. Golumbic.
\newblock {\em Algorithmic Graph Theory and Perfect Graphs}, volume~57 of {\em
  Ann. Discrete Math.}
\newblock Elsevier, 2 edition, 2004.

\bibitem{GHH23-AMC}
S.~Guzm{\'{a}}n{-}Pro, P.~Hell, and C.~Hern{\'{a}}ndez{-}Cruz.
\newblock Describing hereditary properties by forbidden circular orderings.
\newblock {\em Appl. Math. Comput.}, 438:127555, 2023.
\newblock \href {http://dx.doi.org/10.1016/j.amc.2022.127555}
  {\path{doi:10.1016/j.amc.2022.127555}}.

\bibitem{HM20-ALGO}
M.~Habib and L.~Mouatadid.
\newblock Maximum induced matching algorithms via vertex ordering
  characterizations.
\newblock {\em Algorithmica}, 82(2):260--278, 2020.
\newblock \href {http://dx.doi.org/10.1007/s00453-018-00538-5}
  {\path{doi:10.1007/s00453-018-00538-5}}.

\bibitem{HKP16-DAM}
M.~M. Halld{\'{o}}rsson, S.~Kitaev, and A.~V. Pyatkin.
\newblock Semi-transitive orientations and word-representable graphs.
\newblock {\em Discret. Appl. Math.}, 201:164--171, 2016.
\newblock \href {http://dx.doi.org/10.1016/j.dam.2015.07.033}
  {\path{doi:10.1016/j.dam.2015.07.033}}.

\bibitem{HHMR20-SIDMA}
P.~Hell, J.~Huang, R.~M. McConnell, and A.~Rafiey.
\newblock Min-orderable digraphs.
\newblock {\em {SIAM} J. Discret. Math.}, 34(3):1710--1724, 2020.
\newblock \href {http://dx.doi.org/10.1137/19M1241763}
  {\path{doi:10.1137/19M1241763}}.

\bibitem{HMR14-LNCS}
P.~Hell, B.~Mohar, and A.~Rafiey.
\newblock Ordering without forbidden patterns.
\newblock In A.~Schulz and D.~Wagner, editors, {\em {ESA} 2014}, volume 8737 of
  {\em Lecture Notes in Comput. Sci.}, pages 554--565. Springer Berlin
  Heidelberg, 2014.
\newblock \href {http://dx.doi.org/10.1007/978-3-662-44777-2_46}
  {\path{doi:10.1007/978-3-662-44777-2_46}}.

\bibitem{Hixon13-Master}
T.~S. Hixon.
\newblock Hook graphs and more: Some contributions to geometric graph theory.
\newblock Master's thesis, Technische Universit\"{a}t Berlin, 2013.

\bibitem{Huang18-DAM}
J.~Huang.
\newblock Non-edge orientation and vertex ordering characterizations of some
  classes of bigraphs.
\newblock {\em Discret. Appl. Math.}, 245:190--193, 2018.
\newblock \href {http://dx.doi.org/10.1016/j.dam.2017.02.001}
  {\path{doi:10.1016/j.dam.2017.02.001}}.

\bibitem{JT19-EJC}
V.~Jel{\'{\i}}nek and M.~T{\"{o}}pfer.
\newblock On grounded {L}-graphs and their relatives.
\newblock {\em Electron. J. Comb.}, 26(3):3, 2019.
\newblock \href {http://dx.doi.org/10.37236/8096} {\path{doi:10.37236/8096}}.

\bibitem{JKPR15-EJC}
M.~E. Jones, S.~Kitaev, A.~V. Pyatkin, and J.~B. Remmel.
\newblock Representing graphs via pattern avoiding words.
\newblock {\em Electron. J. Comb.}, 22(2):P2.53, 2015.
\newblock \href {http://dx.doi.org/10.37236/4946} {\path{doi:10.37236/4946}}.

\bibitem{Kitaev17-LNCS}
S.~Kitaev.
\newblock A comprehensive introduction to the theory of word-representable
  graphs.
\newblock In {\'{E}}.~Charlier, J.~Leroy, and M.~Rigo, editors, {\em {DLT}
  2017}, volume 10396 of {\em Lecture Notes in Comput. Sci.}, pages 36--67.
  Springer, 2017.
\newblock \href {http://dx.doi.org/10.1007/978-3-319-62809-7\_2}
  {\path{doi:10.1007/978-3-319-62809-7\_2}}.

\bibitem{Kitaev17-JGT}
S.~Kitaev.
\newblock Existence of \emph{u}-representation of graphs.
\newblock {\em J. Graph Theory}, 85(3):661--668, 2017.
\newblock \href {http://dx.doi.org/10.1002/jgt.22097}
  {\path{doi:10.1002/jgt.22097}}.

\bibitem{KL15-book}
S.~Kitaev and V.~V. Lozin.
\newblock {\em Words and Graphs}.
\newblock Monographs in Theoretical Computer Science. An {EATCS} Series.
  Springer, 2015.
\newblock \href {http://dx.doi.org/10.1007/978-3-319-25859-1}
  {\path{doi:10.1007/978-3-319-25859-1}}.

\bibitem{KP08-JALC}
S.~Kitaev and A.~V. Pyatkin.
\newblock On representable graphs.
\newblock {\em J. Autom. Lang. Comb.}, 13(1):45--54, 2008.
\newblock \href {http://dx.doi.org/10.25596/jalc-2008-045}
  {\path{doi:10.25596/jalc-2008-045}}.

\bibitem{LHKLM19-TCS}
F.~D. Luca, M.~I. Hossain, S.~G. Kobourov, A.~Lubiw, and D.~Mondal.
\newblock Recognition and drawing of stick graphs.
\newblock {\em Theor. Comput. Sci.}, 796:22--33, 2019.
\newblock \href {http://dx.doi.org/10.1016/j.tcs.2019.08.018}
  {\path{doi:10.1016/j.tcs.2019.08.018}}.

\bibitem{Mandelshtam19-DMGT}
Y.~Mandelshtam.
\newblock On graphs presentable by pattern-avoiding words.
\newblock {\em Discuss. Math. Graph Theory}, 39(2):375--389, 2019.
\newblock \href {http://dx.doi.org/10.7151/dmgt.2128}
  {\path{doi:10.7151/dmgt.2128}}.

\bibitem{Mertzios15-SIAMDM}
G.~B. Mertzios.
\newblock The recognition of simple-triangle graphs and of linear-interval
  orders is polynomial.
\newblock {\em {SIAM} J. Discrete Math.}, 29(3):1150--1185, 2015.
\newblock \href {http://dx.doi.org/10.1137/140963108}
  {\path{doi:10.1137/140963108}}.

\bibitem{MNN18-SIDMA}
G.~B. Mertzios, A.~Nichterlein, and R.~Niedermeier.
\newblock A linear-time algorithm for maximum-cardinality matching on
  cocomparability graphs.
\newblock {\em {SIAM} J. Discret. Math.}, 32(4):2820--2835, 2018.
\newblock \href {http://dx.doi.org/10.1137/17M1120920}
  {\path{doi:10.1137/17M1120920}}.

\bibitem{Olariu91-IPL}
S.~Olariu.
\newblock An optimal greedy heuristic to color interval graphs.
\newblock {\em Inf. Process. Lett.}, 37(1):21--25, 1991.
\newblock \href {http://dx.doi.org/10.1016/0020-0190(91)90245-D}
  {\path{doi:10.1016/0020-0190(91)90245-D}}.

\bibitem{Rafiey22-JGT}
A.~Rafiey.
\newblock Recognizing interval bigraphs by forbidden patterns.
\newblock {\em J. Graph Theory}, 100(3):504--529, 2022.
\newblock \href {http://dx.doi.org/10.1002/jgt.22792}
  {\path{doi:10.1002/jgt.22792}}.

\bibitem{Rotem81-DM}
D.~Rotem.
\newblock Stack sortable permutations.
\newblock {\em Discret. Math.}, 33(2):185--196, 1981.
\newblock \href {http://dx.doi.org/10.1016/0012-365X(81)90165-5}
  {\path{doi:10.1016/0012-365X(81)90165-5}}.

\bibitem{Rusu23-TCS}
I.~Rusu.
\newblock On the complexity of recognizing {Stick}, {BipHook} and {Max
  Point-Tolerance} graphs.
\newblock {\em Theor. Comput. Sci.}, 952:113773, 2023.
\newblock \href {http://dx.doi.org/10.1016/j.tcs.2023.113773}
  {\path{doi:10.1016/j.tcs.2023.113773}}.

\bibitem{STU10-DAM}
A.~M.~S. Shrestha, S.~Tayu, and S.~Ueno.
\newblock On orthogonal ray graphs.
\newblock {\em Discrete Appl. Math.}, 158(15):1650--1659, 2010.
\newblock \href {http://dx.doi.org/10.1016/j.dam.2010.06.002}
  {\path{doi:10.1016/j.dam.2010.06.002}}.

\bibitem{SC15-DMTCS}
M.~Soto and C.~T. Caro.
\newblock {$p$-Box}: {A} new graph model.
\newblock {\em Discret. Math. Theor. Comput. Sci.}, 17(1):169--186, 2015.
\newblock \href {http://dx.doi.org/10.46298/dmtcs.2121}
  {\path{doi:10.46298/dmtcs.2121}}.

\bibitem{Spinrad03}
J.~P. Spinrad.
\newblock {\em Efficient Graph Representations}, volume~19 of {\em Fields
  Institute Monographs}.
\newblock {AMS}, Providence, RI, USA, 2003.

\bibitem{Takaoka18-DM}
A.~Takaoka.
\newblock A vertex ordering characterization of simple-triangle graphs.
\newblock {\em Discrete Math.}, 341(12):3281--3287, 2018.
\newblock \href {http://dx.doi.org/10.1016/j.disc.2018.08.009}
  {\path{doi:10.1016/j.disc.2018.08.009}}.

\bibitem{Takaoka20a-DAM}
A.~Takaoka.
\newblock A recognition algorithm for simple-triangle graphs.
\newblock {\em Discret. Appl. Math.}, 282:196--207, 2020.
\newblock \href {http://dx.doi.org/10.1016/j.dam.2019.11.009}
  {\path{doi:10.1016/j.dam.2019.11.009}}.

\bibitem{Takaoka20-DAM}
A.~Takaoka.
\newblock Recognizing simple-triangle graphs by restricted 2-chain subgraph
  cover.
\newblock {\em Discret. Appl. Math.}, 279:154--167, 2020.
\newblock \href {http://dx.doi.org/10.1016/j.dam.2019.10.028}
  {\path{doi:10.1016/j.dam.2019.10.028}}.

\bibitem{Takaoka21-DAM}
A.~Takaoka.
\newblock A recognition algorithm for adjusted interval digraphs.
\newblock {\em Discret. Appl. Math.}, 294:253--256, 2021.
\newblock \href {http://dx.doi.org/10.1016/j.dam.2021.02.014}
  {\path{doi:10.1016/j.dam.2021.02.014}}.

\bibitem{Takaoka23a-arXiv}
A.~Takaoka.
\newblock Computing shortest 12-representants of labeled graphs.
\newblock {\em CoRR}, abs/2304.07507, 2023.
\newblock \href {http://arxiv.org/abs/2304.07507} {\path{arXiv:2304.07507}},
  \href {http://dx.doi.org/10.48550/arXiv.2304.07507}
  {\path{doi:10.48550/arXiv.2304.07507}}.

\bibitem{Takaoka23-DMGT-inpress}
A.~Takaoka.
\newblock Graph classes equivalent to 12-representable graphs.
\newblock {\em Discuss. Math. Graph Theory}, in press.
\newblock \href {http://dx.doi.org/10.7151/dmgt.2486}
  {\path{doi:10.7151/dmgt.2486}}.

\bibitem{TM76-DM}
W.~T. {Trotter, Jr.} and J.~I. {Moore, Jr.}
\newblock Characterization problems for graphs, partially ordered sets,
  lattices, and families of sets.
\newblock {\em Discrete Math.}, 16(4):361--381, 1976.
\newblock \href {http://dx.doi.org/10.1016/S0012-365X(76)80011-8}
  {\path{doi:10.1016/S0012-365X(76)80011-8}}.

\end{thebibliography}

\end{document}